\tikzset{>=stealth'} 
\newcommand{\Ob}{\mathrm{Ob}}
\newcommand{\Core}{\mathrm{Core}}
\newcommand{\lcm}{\mathrm{lcm}}
\newcommand{\Ih}{\mathrm{Ih}}
\newcommand{\into}{\hookrightarrow}
\newcommand{\tto}{\longrightarrow}
\newcommand{\hence}{\Rightarrow}
\newcommand{\iso}{\overset{\simeq}{\tto}}
\numberwithin{equation}{section}
\numberwithin{figure}{section}
\newcommand\qedsymbol{\hbox{$\Box$}}
\newcommand\qed{\relax\ifmmode\Box\else
  {\unskip\nobreak\hfil\penalty50\hskip1em\null\nobreak\hfil\qedsymbol
  \parfillskip=\z@\finalhyphendemerits=0\endgraf}\fi}
\newenvironment{proof-of}[1][{}]{\par\noindent \textbf{Proof of} {#1}. }{\qed}
\newenvironment{proof}[0]{\par\noindent \textbf{Proof}.}{\qed}
\DeclareMathOperator*{\Gal}{Gal}
\newcommand{\GT}{\mathsf{GT}}
\newcommand{\Dih}{\mathsf{Dih}}
\newcommand{\GTSh}{\mathsf{GTSh}}
\newcommand{\GTh}{\widehat{\mathsf{GT}}}
\newcommand{\Zhat}{\widehat{\mathbb{Z}}}
\newcommand{\Fh}{\widehat{\mathrm{F}}}
\newcommand{\PaB}{\mathsf{PaB}}
\newcommand{\NFI}{\mathsf{NFI}}
\newcommand{\F}{\mathrm{F}} 
\newcommand{\PB}{\mathrm{PB}} 
\newcommand{\B}{\mathrm{B}} 
\renewcommand{\H}{\mathsf{H}} 
\newcommand{\K}{\mathsf{K}} 
\renewcommand{\L}{\mathsf{L}} 
\newcommand{\N}{\mathsf{N}} 
\newcommand{\PR}{\mathscr{PR}}
\newcommand{\End}{{\mathsf{End}}}
\newcommand{\Aut}{\mathrm{Aut}}
\newcommand{\ord}{\mathrm{ord}}
\newcommand{\isom}{\mathrm{isom}}
\newcommand{\conn}{\mathrm{conn}}
\newcommand{\id}{\mathrm{id}}
\newcommand{\ti}[1]{{\tilde{#1}}}
\newcommand{\wh}[1]{{\widehat{#1}}}
\newcommand{\ol}[1]{{\overline{#1}}}
\newcommand{\e}[1]{{\textbf{#1}}}
\newcommand{\txt}[1]{{\textrm{#1}}}
\newcommand{\dia}{\diamond}
\newcommand{\hs}{\heartsuit}
\newcommand{\la}{{\lambda}}
\newcommand{\si}{{\sigma}}
\newcommand{\ga}{{\gamma}}
\newcommand{\vf}{{\varphi}}
\newcommand{\ka}{{\kappa}}
\newcommand{\ML}{{\mathcal{ML}}}
\newcommand{\cF}{\mathcal{F}}
\newcommand{\cC}{\mathcal{C}}
\newcommand{\cZ}{\mathcal{Z}}
\newcommand{\cP}{\mathcal{P}}
\newcommand{\cR}{\mathcal{R}}
\newcommand{\hcP}{\widehat{\mathcal{P}}}
\newcommand{\ZZ}{{\mathbb Z}}
\newcommand{\QQ}{{\mathbb Q}}
\newcommand{\te}{\theta}
\newcommand{\Te}{\Theta}
\newcommand{\D}{{\Delta}}
\renewcommand{\mod}{~\mathrm{mod}~}
\newcommand{\lan}{\langle\,}
\newcommand{\ran}{\,\rangle}
\date{}
\newtheorem{thm}{Theorem}[section]
\newtheorem{defi}[thm]{Definition}
\newtheorem{lem}[thm]{Lemma}
\newtheorem{cor}[thm]{Corollary}
\newtheorem{prop}[thm]{Proposition}
\newtheorem{remark}[thm]{Remark}
\title{$\GT$-shadows for the gentle version $\GTh_{gen}$ of the Grothendieck-Teichmueller group}
\author{Vasily A. Dolgushev and Jacob J. Guynee}
\date{}
\begin{document}


\maketitle

\begin{abstract}
Let $\B_3$ be the Artin braid group on $3$ strands and $\PB_3$ be the corresponding pure braid group. 
In this paper, we construct the groupoid $\GTSh$ of $\GT$-shadows for a (possibly more tractable) 
version $\GTh_0$ of the Grothendieck-Teichmueller group $\GTh$ introduced in paper 
\cite{HS-fund-groups} by D. Harbater and L. Schneps. We call this group the gentle version of 
$\GTh$ and denote it by $\GTh_{gen}$. The objects of $\GTSh$ are finite index normal subgroups 
$\N$ of $\B_3$ satisfying the condition $\N \le \PB_3$. Morphisms of $\GTSh$ are called $\GT$-shadows 
and they may be thought of as approximations to elements of $\GTh_{gen}$. 
We show how $\GT$-shadows can be obtained from elements of $\GTh_{gen}$ and prove 
that $\GTh_{gen}$ is isomorphic to the limit of a certain functor defined in terms of the groupoid $\GTSh$. 
Using this result, we get a criterion for identifying genuine $\GT$-shadows.
\end{abstract}

\section{Introduction}
\label{sec:intro}

Let $\Fh_2$ be the profinite completion of the free group $\F_2 := \lan x, y \ran$ on two generators
and $\Zhat$ be the profinite completion of the ring of integers. 
The profinite version $\GTh$ of the Grothendieck-Teichmueller group \cite[Section 4]{Drinfeld}
is one of the most mysterious objects in mathematics. It consists of pairs 
$(\hat{m}, \hat{f}) \in \Zhat \times \Fh_2$ satisfying the hexagon relations: 
\begin{equation}
\label{hexa1-intro}
\si_1^{2 \hat{m} + 1}   \hat{f}^{-1} \si_2^{2 \hat{m} + 1}  \hat{f} =   
\hat{f}^{-1} \si_1 \si_2\, \si_1^{-2\hat{m}} c^{\hat{m}}\,,
\end{equation}
\begin{equation}
\label{hexa11-intro}
\hat{f}^{-1} \si_2^{2\hat{m}+1} \hat{f} \, \si_1^{2\hat{m}+1}  = \si_2 \si_1 \si_{2}^{-2\hat{m}} c^{\hat{m}} \, \hat{f} \,,
\end{equation}
the pentagon relation:
\begin{equation}
\label{penta-intro}
\hat{f}(x_{23}, x_{34}) \hat{f}(x_{12} x_{13}, x_{24} x_{34}) \hat{f}(x_{12}, x_{23})  ~ = ~ 
\hat{f}(x_{12}, x_{23} x_{24}) \hat{f}(x_{13} x_{23} ,x_{34})
\end{equation}
and the invertibility condition. In relations \eqref{hexa1-intro} and \eqref{hexa11-intro}, $\si_1$ and $\si_2$
are the standard generators of the Artin braid group $\B_3$, $c := (\si_1 \si_2 \si_1)^2$ and 
$\wh{\F}_2$ is considered as the subgroup of $\wh{\B}_3$; namely, $\wh{\F}_2$ is identified 
with the profinite completion of the subgroup $\lan \si^2_1, \si^2_2 \ran$ of $\B_3$. 

In relation \eqref{penta-intro}, $x_{12} := \si_1^2,\, x_{23}:= \si_2^2,\,\dots$ are the standard generators 
\cite[Section 1.3]{Braids} 
of the pure braid group 
$\PB_4$ on $4$ strands and $\hat{f}(x_{23}, x_{34}), \hat{f}(x_{12} x_{13}, x_{24} x_{34}),  \hat{f}(x_{12}, x_{23}), \dots$ 
are the images of $\hat{f}$ with respect to natural (continuous) group homomorphisms $\wh{\F}_2 \to \wh{\PB}_4$, e.g. 
$\hat{f}(x_{12} x_{13}, x_{24} x_{34})$ is the image of the continuous group homomorphism 
$\wh{\F}_2 \to \wh{\PB}_4$ which sends $x$ (resp. $y$) to $x_{12} x_{13} \in \PB_4 \le \wh{\PB}_4$ 
(resp. $ x_{24} x_{34} \in \PB_4 \le \wh{\PB}_4$). 
 
The multiplication on $\GTh$ can be defined by an explicit formula 
(see equations \eqref{F2-to-F2-hat}, \eqref{bullet} or \cite[Section 1.1]{Leila-survey}) 
or by identifying elements of $\GTh$ with continuous automorphisms of $\wh{\F}_2$
(see \cite[Introduction]{HS-fund-groups}). 

The group $\GTh$ and its variants are a part of an active 
area of research\footnote{The lists of references in this paragraph are 
far from complete.} \cite{OpenGTLeila}, \cite{SashaLeilaCohomological},  
\cite{MN-Aut-Bn-hat}, \cite{Pop-tripod}, \cite{Pop-survey}, \cite{Pop-Topaz}, 
\cite{Leila-survey} and this research is often motivated by fruitful links between operads, 
moduli of curves and the geometric action of the absolute Galois group $G_{\QQ}$ of the field of rational 
numbers \cite{BB-Horel-Robertson}, \cite{Combe-Kalugin}, \cite{Combe-Manin}, \cite{Jordan},
\cite{HS-fund-groups}, \cite{HSL-T-tower}, \cite{HMM-numer-invar}, \cite{LNS-new-version-GT}, 
\cite{NS-subgroup-of-GT}. 

In paper \cite{GTshadows}, the authors constructed an infinite groupoid closely 
related to the group $\GTh$. The objects of this groupoid are finite index
normal subgroups $\N$ of the Artin braid group $\B_4$ satisfying the condition 
$\N \le \PB_4$. The morphisms of this groupoid are called $\GT$-\e{shadows}. 
In addition to other things, the authors of \cite{GTshadows}
proved that $\GTh$ is isomorphic to the limit of a certain functor defined in terms of the groupoid
of $\GT$-shadows (see \cite[Section 3]{GTshadows}). In this respect, certain $\GT$-shadows are approximations 
of elements of the group $\GTh$.   

The purpose of this paper is to develop a version of the groupoid of $\GT$-shadows 
for the gentle version $\GTh_{gen}$ of the Grothendieck-Teichmueller group $\GTh$. 
Just as $\GTh$, the group $\GTh_{gen}$ consists of pairs $(\hat{m}, \hat{f}) \in \Zhat \times \Fh_2$
satisfying hexagon relations \eqref{hexa1-intro}, \eqref{hexa11-intro}, the invertibility 
condition and the following consequence of pentagon relation \eqref{penta-intro}: 
$$
\hat{f} \in [\Fh_2, \Fh_2]^{top.\,cl.}\,.
$$
For more details, please see Subsection \ref{sec:GTh-gen-mon-GTh-gen}.  
It is possible that the group $\GTh_{gen}$ is more tractable and it is often 
denoted by $\GTh_0$ (see, for example, \cite[Section 0.1]{HS-fund-groups}). 
The group $\GTh_{gen}$ obviously contains $\GTh$ as a subgroup. 
 
The idea of approximating elements of $\GTh$ and $\GTh_{gen}$ was originally 
suggested in paper \cite{HS} by D. Harbater and L. Schneps. 
We would also like to mention papers 
\cite{Guillot} and \cite{Guillot1} in which P. Guillot developed and studied 
similar constructions for the group $\GTh_{gen}$. Since P. Guillot used 
a very different definition of $\GTh_{gen}$, it is not easy to compare 
the groupoid $\GTSh$ developed in this paper to the constructions presented 
in \cite{Guillot}, \cite{Guillot1}. 
 
\bigskip
\noindent
{\bf The groupoid $\GTSh$ in a nutshell.} Our starting point is the poset 
$\NFI_{\PB_3}(\B_3)$ of finite index normal subgroups $\N$ of $\B_3$ such that 
$\N \le \PB_3$, i.e.
\begin{equation}
\label{NFI-intro}
\NFI_{\PB_3}(\B_3) := \{\N \unlhd \B_3 ~|~ |\B_3 : \N| < \infty, ~ \N \le \PB_3\}.
\end{equation}
Since $\PB_3$ (and $\B_3$) is residually finite, the poset $\NFI_{\PB_3}(\B_3)$ is infinite. 

For $\N \in \NFI_{\PB_3}(\B_3)$, we denote by $N_{\ord}$ the least 
common multiple of the orders of elements $x_{12} \N$, $x_{23} \N$ and $c \N$ in $\PB_3/\N$. 
Moreover, we set $\N_{\F_2} : = \F_2 \cap \N$, where $\F_2$ is identified with the 
subgroup  $\lan x_{12}, x_{23} \ran$ of $\PB_3$. 

For $\N \in \NFI_{\PB_3}(\B_3)$, we consider pairs $(m, f) \in \ZZ \times \F_2$ that satisfy 
the hexagon relations modulo $\N$:
\begin{equation}
\label{hexa1-N-intro}
\si_1^{2m+1}  \, f^{-1} \si_2^{2m+1} f \, \N ~ = ~ 
f^{-1} \si_1 \si_2 x_{12}^{-m} c^m \, \N,
\end{equation} 
\begin{equation}
\label{hexa11-N-intro}
f^{-1} \si_2^{2m+1} f \, \si_1^{2m+1} \,  \N  ~=~ \si_2 \si_1 x_{23}^{-m} c^m \, f \,  \N.
\end{equation}

Due to Proposition \ref{prop:T-m-f}, for every such pair $(m, f)$, the formulas 
\begin{equation}
\label{Tmf-dfn}
T_{m,f} (\si_1):= \si_1^{2 m+1}\, \N, \qquad
T_{m,f} (\si_2):= f^{-1} \si_2^{2 m+1} f \, \N
\end{equation}
define a group homomorphism $T_{m, f} : \B_3 \to \B_3/\N$. 

A $\GT$-\e{shadow} with the target $\N$ is a pair 
$$
(m + N_{\ord} \ZZ ,f \N_{\F_2}) \in \ZZ/N_{\ord} \ZZ \times [\F_2/\N_{\F_2}, \F_2/\N_{\F_2}]
$$
that satisfies the following conditions: 

\begin{itemize}

\item relations \eqref{hexa1-N-intro},  \eqref{hexa11-N-intro} hold, 

\item $2 m + 1$ represents a unit in $\ZZ/N_{\ord} \ZZ$, and

\item the group homomorphism $T_{m, f} : \B_3 \to \B_3/\N$ is onto.

\end{itemize}
We denote by $\GT(\N)$ the set of $\GT$-shadows with the target $\N$
and by $[m, f]$ the $\GT$-shadow represented by a pair $(m, f) \in \ZZ \times \F_2$.
The set $\GT(\N)$ is finite since it is a subset of a finite set. 

Using equation \eqref{rho-N-Tmf}, it is not hard to see that, for every 
$[m, f]  \in \GT(\N)$, $\ker(T_{m, f})$ belongs to the poset $\NFI_{\PB_3}(\B_3)$.
Moreover, since $T_{m, f}$ is onto, it induces an isomorphism of the quotient groups:
$$
T^{\isom}_{m,f} : \B_3/\K \iso \B_3/\N, 
$$
where $\K : = \ker(T_{m, f})$. 
 
The set $\Ob(\GTSh)$ of objects of the groupoid $\GTSh$ is the poset $\NFI_{\PB_3}(\B_3)$. 
Moreover, for $\K, \N \in \NFI_{\PB_3}(\B_3)$, the set $\GTSh(\K, \N)$ of morphisms 
from $\K$ to $\N$ is the subset of $\GT$-shadows 
$[m, f] \in \GT(\N)$ for which $\K =  \ker(T_{m, f})$. 

The composition of morphisms $[m_1, f_1] \in \GTSh(\N^{(2)}, \N^{(1)})$,  
$[m_2, f_2] \in \GT(\N^{(3)}, \N^{(2)})$ is defined by the formula
$$
[m_1, f_1] \circ [m_2,  f_2]  := [2m_1 m_2 + m_1 +m_2 , f_1 E_{m_1, f_1}(f_2)],
$$
where $E_{m_1, f_1}$ is the endomorphism of $\F_2$ defined by the equations 
$E_{m_1,f_1}(x) := x^{2 m_1 + 1}$, $E_{m_1,f_1}(y) := f_1^{-1} y^{2 m_1 + 1} f_1$
(for more details, see Theorem \ref{thm:GTSh}).

It is important that $\Ob(\GTSh)$ is a poset.
In Subsection \ref{sec:cR-N-H}, we show that, if $\N \le \H$, $\N, \H \in \NFI_{\PB_3}(\B_3)$, 
then we have a natural \e{reduction map}: 
\begin{equation}
\label{cR-N-H-intro}
\cR_{\N, \H} : \GT(\N) \to \GT(\H).  
\end{equation}
In Section \ref{sec:ML}, this map plays an important role in connecting the groupoid 
$\GTSh$ to the group $\GTh_{gen}$. 

Although the groupoid $\GTSh$ is infinite, the connected component 
$\GTSh_{\conn}(\N)$ of an object $\N \in  \NFI_{\PB_3}(\B_3)$ is always a finite 
groupoid. An object $\N$ of the groupoid $\GTSh$ is called \e{isolated} if its connected 
component $\GTSh_{\conn}(\N)$ has exactly one object. In this case, 
$\GTSh(\N, \N) = \GT(\N)$ is a (finite) group and the groupoid 
$\GTSh_{\conn}(\N)$ may be identified with this group. 
In Subsection \ref{sec:isolated}, we show that the subposet  
$\NFI^{isolated}_{\PB_3}(\B_3) \subset  \NFI_{\PB_3}(\B_3)$
of isolated objects of $\GTSh$ is cofinal, i.e., for every $\N \in \NFI_{\PB_3}(\B_3)$, there exists 
$\ti{\N} \in \NFI^{isolated}_{\PB_3}(\B_3)$ such that $\ti{\N} \le \N$. More precisely, 
due to Proposition \ref{prop:N-diamond}, for every $\N \in \NFI_{\PB_3}(\B_3)$, 
the intersection of all objects of the connected component $\GTSh_{\conn}(\N)$
is an isolated object $\N^{\dia}$ of $\GTSh$ such that $\N^{\dia} \le \N$. 


\bigskip
\noindent
{\bf The group $\GTh_{gen}$ versus the groupoid $\GTSh$.} 
In Section \ref{sec:GThat-NFI}, we define a natural action of the group $\GTh_{gen}$ 
on the poset $\NFI_{\PB_3}(\B_3)$. This allows us to introduce the transformation 
groupoid  $\GTh^{gen}_{\NFI}$ and a functor 
$$
\PR : \GTh^{gen}_{\NFI} \to \GTSh.
$$
More precisely, to every element $(\hat{m}, \hat{f}) \in \GTh_{gen}$ 
and every $\N \in \NFI_{\PB_3}(\B_3)$, we assign a $\GT$-shadow 
$[m, f]_{\N}$ with the target $\N$, and the formula
$$
\N^{(\hat{m}, \hat{f})} : = \ker(T_{m, f}) 
$$
defines a right action of $\GTh_{gen}$ on the poset $\NFI_{\PB_3}(\B_3)$. 
We can think of the $\GT$-shadow $[m, f]_{\N}$ as an approximation of 
the element $(\hat{m}, \hat{f})$. For this reason, the functor $\PR$ is called the 
\e{approximation functor}. $\GT$-shadows obtained in this way from elements of 
$\GTh_{gen}$ are called \e{genuine} and all the remaining $\GT$-shadows (if any) 
are called \e{fake}. 

In Section \ref{sec:ML}, we show how the topological group $\GTh_{gen}$
can be reconstructed from the groupoid $\GTSh$.  
We observe that, for every $\N \in \NFI^{isolated}_{\PB_3}(\B_3)$, 
$\GT(\N)$ is a finite group, and the reduction map \eqref{cR-N-H-intro} 
allows us to upgrade the assignment 
$$
\N \mapsto \GT(\N), \qquad \N \in \NFI^{isolated}_{\PB_3}(\B_3)
$$
to a functor from the poset  $\NFI^{isolated}_{\PB_3}(\B_3)$ to the category of 
finite groups. We call it the \e{Main Line functor} and denote it by $\ML$. 

Using the approximation functor $\PR: \GTh^{gen}_{\NFI} \to \GTSh$, it is easy to 
construct a natural group homomorphism 
\begin{equation}
\label{Psi-intro}
\Psi : \GTh_{gen} \to \lim(\ML).  
\end{equation}
The main result of this paper is Theorem \ref{thm:lim-ML} which states that 
$\Psi$ is an isomorphism of groups and a homeomorphism of topological 
spaces. ($\GTh_{gen}$ is considered with the subset topology coming from 
the topological space $\Zhat \times \Fh_2$.)

Thanks to Theorem \ref{thm:lim-ML}, we have the following criterion for 
identifying genuine $\GT$-shadows: a $\GT$-shadow $[m, f] \in \GT(\H)$ is genuine 
if and only if $[m, f]$ belongs to the image of the reduction map
$\cR_{\N, \H} : \GT(\N) \to \GT(\H)$ for every $\N \in \NFI_{\PB_3}(\B_3)$ 
such that $\N \le \H$ (see Corollary \ref{cor:genuine-iff}). Equivalently, 
a $\GT$-shadow $[m, f] \in \GT(\H)$ is fake if and only if there exists 
$\N \in \NFI_{\PB_3}(\B_3)$ such that $\N \le \H$ and $[m, f]$ does not 
belong to the image of the reduction map $\cR_{\N, \H} : \GT(\N) \to \GT(\H)$. 

In recent paper \cite{YD-final-paper}, the authors considered a subposet $\Dih$ 
of $\NFI_{\PB_3}(\B_3)$ related to the family of dihedral groups and they 
called $\Dih$ the \e{dihedral poset}. In \cite{YD-final-paper}, it was proved that every 
element of the dihedral poset is an isolated object of the groupoid $\GTSh$ 
and gave an explicit description of the (finite) group $\GT(\K)$ for every $\K \in \Dih$. 
In \cite{YD-final-paper}, the authors also proved that, for every pair $\N, \H \in \Dih$ with $\N \le \H$, 
the natural map $\GT(\N) \to \GT(\H)$ is onto. This result implies that one cannot find 
an example of a fake\footnote{At the time of writing, the authors of this paper do not know a single 
example of a fake $\GT$-shadow.} $\GT$-shadow using only the dihedral poset $\Dih$.

\bigskip
\noindent
{\bf Organization of the paper.} In Section \ref{sec:GTh-gen}, we introduce the group $\GTh_{gen}$. 
We also recall that $\GTh_{gen}$ comes with natural injective homomorphisms to the 
group of continuous automorphisms of $\Fh_2$ and to the group of continuous 
automorphisms of $\wh{\B}_3$. 

Section \ref{sec:GTSh} is the core of this paper. In this section, we introduce the groupoid 
$\GTSh$ of $\GT$-shadows (for $\GTh_{gen}$), define the reduction map (see \eqref{cR-N-H-intro} 
or \eqref{cR-N-H}), discuss connected components of $\GTSh$ and introduce 
isolated objects of $\GTSh$.  

In Section \ref{sec:GThat-NFI}, we introduce the action of the group $\GTh$ on the poset
$\NFI_{\PB_3}(\B_3)$ and define the approximation functor $\PR$ from the transformation groupoid 
$\GTh^{gen}_{\NFI}$ to $\GTSh$. The $\GT$-shadows that belong to the image of $\PR$
are called genuine. 

In Section \ref{sec:ML}, we introduce the Main Line functor $\ML$ and prove 
that $\lim(\ML)$ is isomorphic to the group $\GTh_{gen}$ (see Theorem \ref{thm:lim-ML}). 
In this section, we also prove a criterion for identifying genuine 
$\GT$-shadows (see Corollary \ref{cor:genuine-iff}) and show that 
the group $\GTh_{gen}$ is isomorphic to the group $\GTh_0$ introduced 
in \cite[Section 0.1]{HS-fund-groups} (see Proposition \ref{prop:H-I-H-II}).

Appendix \ref{app:profinite} is devoted to selected statements about profinite groups.

\subsection{Notational conventions}
For a set $X$ with an equivalence relation and $a \in X$ we will denote by $[a]$
the equivalence class which contains the element $a$. 

The notation $\B_n$ (resp. $\PB_n$) is reserved for the Artin braid group 
on $n$ strands (resp. the pure braid group on $n$ strands). $S_n$ denotes the 
symmetric group on $n$ letters. We denote by $\si_1$ and $\si_2$ the standard 
generators of $\B_3$. Furthermore, we set 
$$
x_{12} := \si_1^2, \qquad x_{23}:=\si_2^2, \qquad  
\D := \si_1 \si_2 \si_1, \qquad 
c : = \D^2\,.
$$
We recall \cite[Section 1.3]{Braids} that the element $c$
belongs to the center $\cZ(\PB_3)$ of $\PB_3$ (and the center $\cZ(\B_3)$ of $\B_3$). 
Moreover, $\cZ(\B_3) = \cZ(\PB_3) = \lan c \ran \cong \ZZ$. 

We observe that 
\begin{equation}
\label{si-D-si}
\si_1 \D = \D \si_2, \qquad \si_2 \D = \D \si_1, 
\qquad 
\si_1^{-1} \D = \D \si_2^{-1}, 
\qquad 
\si_2^{-1} \D = \D \si_1^{-1}.
\end{equation}

Using identities \eqref{si-D-si} and $c = \D^2$, it is easy to see that the adjoint 
action of $\B_3$ on $\PB_3$ is given on generators by the formulas: 
\begin{equation}
\label{conj-by-si1}
\si_1 x_{12} \si_1^{-1}  = \si_1^{-1} x_{12} \si_1 = x_{12}, \qquad 
\si_1 x_{23} \si_1^{-1} = x_{23}^{-1} x_{12}^{-1} c, \qquad 
\si_1^{-1} x_{23} \si_1 = x_{12}^{-1} x_{23}^{-1}  c,
\end{equation} 
\begin{equation}
\label{conj-by-si2}
\si_2 x_{12} \si_2^{-1} = x^{-1}_{12} x^{-1}_{23} c, \qquad 
\si_2^{-1} x_{12} \si_2 = x_{23}^{-1} x_{12}^{-1} c \qquad
\si_2 x_{23} \si_2^{-1} = \si_2^{-1} x_{23} \si_2 =  x_{23}\,.
\end{equation}  
Moreover, 
\begin{equation}
\label{conj-by-D}
\D x_{12} \D^{-1} = x_{23}, \qquad 
\D x_{23} \D^{-1} = x_{12}\,.
\end{equation}

It is known \cite[Section 1.3]{Braids} that $\lan x_{12}, x_{23} \ran $ is isomorphic to the free group $\F_2$ on two 
generators and we tacitly identify $\F_2$ with the subgroup $\lan x_{12}, x_{23} \ran $ of $\PB_3$. 
Furthermore, $\PB_3$ is isomorphic to $\F_2 \times \lan c \ran$ 
\cite[Section 1.3]{Braids}. We often by $x, y, z$ the elements
$x_{12}$, $x_{23}$ and $(x_{12} x_{23})^{-1}$, respectively, i.e.
$$
x:=x_{12}, \qquad y:=x_{23}, \qquad 
z:=y^{-1} x^{-1}\,.
$$

We denote by $\te$ and $\tau$ the automorphisms of $\F_2:= \lan x,y \ran$
defined by the formulas
\begin{equation}
\label{theta}
\te(x):= y, \qquad \te(y):= x,
\end{equation}
\begin{equation}
\label{tau}
\tau(x):= y, \qquad \tau(y):= y^{-1} x^{-1}\,.
\end{equation}
By abuse of notation, we will use the same letters $\te$ and $\tau$ for the corresponding continuous 
automorphisms of $\wh{\F}_2$, respectively. 
(See Corollary \ref{cor:extend-uniquely} in Appendix \ref{app:profinite}).

%

For a group $G$, the notation $[G, G]$ is reserved for the commutator subgroup of $G$. 
For a subgroup $H\le G$, the notation $|G:H|$ is reserved for the index of $H$ in $G$. 
For a normal subgroup $H\unlhd G$ of finite index, we denote by $\NFI_{H}(G)$ the poset 
of finite index normal subgroups $\N$ in $G$ such that $\N \le H$. Moreover, 
$\NFI(G) := \NFI_G(G)$, i.e. $\NFI(G)$ is the poset of normal finite index subgroups of a group $G$.
For a subgroup $H\le G$, $\Core_G(H)$ denotes the normal core of $H$ in $G$, i.e. 
$$
\Core_G(H) : = \bigcap_{g \in G} g H g^{-1}\,.
$$

For $\N \in \NFI(G)$, $\cP_{\N}$ denotes the standard (onto) homomorphism 
\begin{equation}
\label{cP-N}
\cP_{\N} : G \to G/\N.
\end{equation}
Moreover, for $\K \in  \NFI(G)$ such that $\K \le \N$, the notation $\cP_{\K, \N}$ is 
reserved for the standard (onto) homomorphism 
\begin{equation}
\label{cP-K-N}
\cP_{\K, \N} : G/\K \to G/\N.
\end{equation}
Every finite group/set is tacitly considered with the discrete topology.

For a group $G$, $\wh{G}$ denotes the profinite completion of $G$. If $G$ is residually finite, 
then we tacitly identify $G$ with its image in $\wh{G}$. For $\N \in \NFI(G)$, $\hcP_{\N}$ denotes 
the standard continuous group homomorphism
\begin{equation}
\label{hcP-N}
\hcP_{\N} : \wh{G} \to G/\N.
\end{equation}

Let $G$ be a residually finite group. Since every group homomorphism $\vf : G \to \wh{H}$
extends uniquely to a continuous group homomorphism from $\wh{G}$ to $\wh{H}$ 
(see Corollary \ref{cor:extend-uniquely} in Appendix \ref{app:profinite}),
we often use the same symbol for this continuous group 
homomorphism $\wh{G} \to \wh{H}$. 

For a prime $p$, $\ZZ_p$ denotes the ring of $p$-adic integers. 

For a category $\cC$, the notation $\Ob(\cC)$ is reserved for the set of objects of $\cC$.
For $a, b \in \Ob(\cC)$, $\cC(a,b)$ denotes the set of morphisms in $\cC$ from $a$ to $b$. 
Every poset $J$ is tacitly considered as the category with $J$ being the set of its objects; 
if $j_1 \le j_2$, then we have exactly one morphism $j_1 \to j_2$; otherwise, there are no 
morphisms from $j_1$ to $j_2$. A subposet $\ti{J} \subset J$ is called \e{cofinal} if
$\forall ~ j \in J$ $\exists~ \ti{j} \in \ti{J}$ such that $\ti{j} \le j$.  
 
\bigskip
\noindent
{\bf Notational quirks.} Paper \cite{GTshadows} develops the groupoid of $\GT$-shadows for 
the original (profinite) version $\GTh$ of the Grothendieck-Teichmueller group \cite[Section 4]{Drinfeld}. 
In consideration of paper \cite{GTshadows}, we should have denoted the groupoid of $\GT$-shadows for $\GTh_{gen}$
by $\GTSh_{gen}$. However, we decided to omit the subscript ``$gen$'' to simplify the notation. This should 
not lead to a confusion because the main focus of this paper is the group $\GTh_{gen}$ and 
the corresponding groupoid of $\GT$-shadows. We should also mention that, paper \cite{GTshadows} 
considers $\GT$-shadows $[m, f]$ that may not satisfy the condition 
\begin{equation}
\label{fN-in-comm-subgroup}
f \N_{\F_2} \in [ \F_2/ \N_{\F_2},  \F_2/ \N_{\F_2} ],
\end{equation}
and $\GT$-shadows $[m, f]$ satisfying \eqref{fN-in-comm-subgroup} are called charming. 
(In fact, in paper \cite{GTshadows}, the authors consider the groupoid $\GTSh$ of $\GT$-shadows 
for $\GTh$ and the subgroupoid $\GTSh^{\hs} \subset \GTSh$ of charming $\GT$-shadows (for $\GTh$).)
In this paper, we impose condition \eqref{fN-in-comm-subgroup} at an earlier stage. Hence we have 
only one groupoid of $\GT$-shadows for $\GTh_{gen}$.

\bigskip
\noindent
{\bf Acknowledgement.} We are thankful to No\'emie Combe, Jaclyn Lang, Winnie Li, 
Adrian Ocneanu, Jessica Radford and Jingfeng Xia for useful discussions. 
We are thankful to Benjamin Enriquez for suggesting us the
idea to use the transformation groupoid of the action of $\GTh_{gen}$ on 
$\NFI_{\PB_3}(\B_3)$. We are thankful to Florian Pop for showing us paper \cite{HerfortRibes}.
We are thankful to Ivan Bortnovskyi, Borys Holikov, Vadym Pashkovskyi and Ihor Pylaiev  
for their amazing patience with details of this paper. V.A.D. acknowledges a partial 
support from the project ``Arithmetic and Homotopic Galois Theory''. This project is, in turn,  
supported by the CNRS France-Japan AHGT International Research Network between 
the RIMS Kyoto University, the LPP of Lille University, and the DMA of ENS PSL. 
V.A.D. also acknowledges Temple University 
for 2021 Summer Research Award. J.J.G. acknowledges the Temple University College of 
Science and Technology Science Scholars Program for their funding in Summer 2020.

\section{The gentle version $\GTh_{gen}$ of the Grothendieck-Teichmueller group}
\label{sec:GTh-gen}

\subsection{The monoid $\big( \Zhat \times \wh{\F}_2, \bullet \big)$}
\label{sec:Zhat-F2hat}

To introduce $\GTh_{gen}$, we denote by $E_{\hat{m}, \hat{f}}$ the following 
group homomorphism from $\F_2$ to $\wh{\F}_2$
\begin{equation}
\label{F2-to-F2-hat}
E_{\hat{m}, \hat{f}}(x) := x^{2\hat{m}+1}, \qquad  
E_{\hat{m}, \hat{f}}(y) :=  \hat{f}^{-1} y^{2\hat{m}+1} \hat{f},
\end{equation}
where $(\hat{m}, \hat{f}) \in \Zhat \times \wh{\F}_2$.

Due to Corollary \ref{cor:extend-uniquely} from Appendix 
\ref{app:profinite}, $E_{\hat{m}, \hat{f}}$ extends uniquely to a continuous endomorphism 
of $\wh{\F}_2$:
\begin{equation}
\label{Emf-hat}
E_{\hat{m}, \hat{f}} :  \wh{\F}_2 \to  \wh{\F}_2\,.
\end{equation}
By abuse of notation, we use the same symbol $E_{\hat{m}, \hat{f}}$ for 
the extension of the homomorphism defined in \eqref{F2-to-F2-hat}.

Let $(\hat{m}_1, \hat{f}_1), (\hat{m}_2, \hat{f}_2) \in \Zhat \times \wh{\F}_2$ and
$$
\hat{m} := 2\hat{m}_1 \hat{m}_2 + \hat{m}_1 + \hat{m}_2, 
\qquad 
\hat{f} :=  \hat{f}_1 E_{\hat{m}_1, \hat{f}_1}(\hat{f}_2).
$$
A direct computation shows that 
$$
E_{\hat{m}_1, \hat{f}_1} \circ E_{\hat{m}_2, \hat{f}_2}(x) = E_{\hat{m}, \hat{f}} (x), 
\qquad
E_{\hat{m}_1, \hat{f}_1} \circ E_{\hat{m}_2, \hat{f}_2}(y) = E_{\hat{m}, \hat{f}}(y).
$$
Hence, applying Corollary \ref{cor:extend-uniquely}, we conclude that 
\begin{equation}
\label{E-homomorphism}
E_{\hat{m}_1, \hat{f}_1} \circ E_{\hat{m}_2, \hat{f}_2} = E_{\hat{m}, \hat{f}}.
\end{equation}

This motivates us to define the following binary operation 
$\bullet$ on $\Zhat \times \wh{\F}_2$
\begin{equation}
\label{bullet}
(\hat{m}_1, \hat{f}_1) \bullet (\hat{m}_2, \hat{f}_2) : =
\big(\, 2\hat{m}_1 \hat{m}_2 + \hat{m}_1 + \hat{m}_2, \, 
\hat{f}_1 E_{\hat{m}_1, \hat{f}_1}(\hat{f}_2) \, \big). 
\end{equation}

Let us prove that 
\begin{prop}  
\label{prop:Z-F2-monoid}
The set $\Zhat \times \wh{\F}_2$ is a monoid with respect to the binary 
operation $\bullet$ (see \eqref{bullet}) and the pair $(0, 1_{ \wh{\F}_2})$ is the 
identity element of this monoid. Moreover, the assignment 
$$
(\hat{m}, \hat{f}) \mapsto E_{\hat{m}, \hat{f}}
$$
defines a homomorphism of monoids $\Zhat \times \wh{\F}_2 \to \End(\wh{\F}_2)$, 
where $\End(\wh{\F}_2)$ is the monoid of continuous endomorphisms 
of $\wh{\F}_2$.
\end{prop}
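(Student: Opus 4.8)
The plan is to check directly that $\bullet$ is associative with two-sided identity $(0, 1_{\wh{\F}_2})$, and then to read off the monoid homomorphism statement from \eqref{E-homomorphism}. It is worth noting at the outset that the assignment $(\hat{m}, \hat{f}) \mapsto E_{\hat{m}, \hat{f}}$ is \emph{not} injective (replacing $\hat{f}$ by $y^{\hat{n}}\hat{f}$ leaves $E_{\hat{m},\hat{f}}$ unchanged), so one cannot simply transport a monoid structure back from $\End(\wh{\F}_2)$ along this map; associativity has to be verified on $\Zhat \times \wh{\F}_2$ itself. Fortunately, \eqref{E-homomorphism} together with the fact that each $E_{\hat{m}_1,\hat{f}_1}$ is a group homomorphism does essentially all the work.

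For the first coordinate I would record that the reparametrization $\hat{m} \mapsto 2\hat{m}+1$ intertwines the operation $(\hat{m}_1, \hat{m}_2) \mapsto 2\hat{m}_1\hat{m}_2 + \hat{m}_1 + \hat{m}_2$ with ordinary multiplication in $\Zhat$, since $(2\hat{m}_1+1)(2\hat{m}_2+1) = 2(2\hat{m}_1\hat{m}_2 + \hat{m}_1 + \hat{m}_2) + 1$; as $\Zhat$ is torsion-free, $\hat{m} \mapsto 2\hat{m}+1$ is injective, so associativity of the first coordinate is inherited from associativity of multiplication in $\Zhat$ (and $\hat{m}=0$ is seen to be neutral). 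For the second coordinate, write $(\hat{m}_{12}, \hat{f}_{12}) := (\hat{m}_1, \hat{f}_1) \bullet (\hat{m}_2, \hat{f}_2)$, so $\hat{f}_{12} = \hat{f}_1 E_{\hat{m}_1,\hat{f}_1}(\hat{f}_2)$ and, by \eqref{E-homomorphism}, $E_{\hat{m}_{12}, \hat{f}_{12}} = E_{\hat{m}_1,\hat{f}_1} \circ E_{\hat{m}_2,\hat{f}_2}$. Then the second coordinate of $\big((\hat{m}_1, \hat{f}_1) \bullet (\hat{m}_2, \hat{f}_2)\big) \bullet (\hat{m}_3, \hat{f}_3)$ is
$$
\hat{f}_{12}\, E_{\hat{m}_{12}, \hat{f}_{12}}(\hat{f}_3) \;=\; \hat{f}_1\, E_{\hat{m}_1,\hat{f}_1}(\hat{f}_2)\, E_{\hat{m}_1,\hat{f}_1}\!\big(E_{\hat{m}_2,\hat{f}_2}(\hat{f}_3)\big) \;=\; \hat{f}_1\, E_{\hat{m}_1,\hat{f}_1}\!\big(\hat{f}_2\, E_{\hat{m}_2,\hat{f}_2}(\hat{f}_3)\big),
$$
where the last step uses that $E_{\hat{m}_1,\hat{f}_1}$ is a homomorphism; this is visibly the second coordinate of $(\hat{m}_1, \hat{f}_1) \bullet \big((\hat{m}_2, \hat{f}_2) \bullet (\hat{m}_3, \hat{f}_3)\big)$. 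For the identity I would note that $E_{0, 1_{\wh{\F}_2}}(x) = x$ and $E_{0, 1_{\wh{\F}_2}}(y) = y$, so $E_{0, 1_{\wh{\F}_2}} = \id_{\wh{\F}_2}$ by the uniqueness part of Corollary \ref{cor:extend-uniquely}, and then $(0, 1_{\wh{\F}_2}) \bullet (\hat{m}, \hat{f}) = (\hat{m}, \hat{f}) = (\hat{m}, \hat{f}) \bullet (0, 1_{\wh{\F}_2})$ follows at once from \eqref{bullet}.

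With $\bullet$ established as a monoid operation, the second assertion is immediate: $E_{0, 1_{\wh{\F}_2}} = \id_{\wh{\F}_2}$ is the unit of $\End(\wh{\F}_2)$, and \eqref{E-homomorphism} says precisely that $(\hat{m}, \hat{f}) \mapsto E_{\hat{m}, \hat{f}}$ sends $(\hat{m}_1, \hat{f}_1)\bullet(\hat{m}_2, \hat{f}_2)$ to $E_{\hat{m}_1,\hat{f}_1} \circ E_{\hat{m}_2,\hat{f}_2}$, i.e. it respects products. I do not expect any genuine obstacle here; the only point that needs a moment's care is the non-injectivity remark, which is what rules out the lazy ``transport of structure'' argument and forces the (short) direct computation above. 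As an alternative packaging, one may observe that $(\hat{m}, \hat{f}) \mapsto (\hat{f}, E_{\hat{m}, \hat{f}})$ identifies $\Zhat \times \wh{\F}_2$ with a submonoid of the semidirect-product monoid $\wh{\F}_2 \rtimes \End(\wh{\F}_2)$ (product $(g_1, \phi_1)(g_2, \phi_2) = (g_1\phi_1(g_2), \phi_1\phi_2)$), but this just reorganizes the same identities and still relies on \eqref{E-homomorphism}.
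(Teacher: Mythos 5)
Your proof is correct and follows essentially the same route as the paper: the identity element is checked directly, the second coordinate of the associativity identity is handled exactly as in the paper via \eqref{E-homomorphism} together with the fact that $E_{\hat{m}_1,\hat{f}_1}$ is a group homomorphism, and the monoid-homomorphism claim is read off from \eqref{E-homomorphism} and $E_{0,1_{\wh{\F}_2}}=\id_{\wh{\F}_2}$. The only cosmetic difference is that you package the first-coordinate computation via the substitution $\hat{m}\mapsto 2\hat{m}+1$ (cf. Remark \ref{rem:cyclotomic}) where the paper just expands both sides; your side remark on the non-injectivity of $(\hat{m},\hat{f})\mapsto E_{\hat{m},\hat{f}}$ on all of $\Zhat\times\wh{\F}_2$ is also apt.
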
  
\begin{proof}
It is easy to see that $(0, 1_{ \wh{\F}_2})$ is the identity element of the 
magma $\big( \Zhat \times \wh{\F}_2, \bullet \big)$. So let us prove the 
associativity of $\bullet$. 

For $(\hat{m}_1, \hat{f}_1), (\hat{m}_2, \hat{f}_2), (\hat{m}_3, \hat{f}_3) \in \Zhat \times \wh{\F}_2 $, we have
\begin{equation}
\label{assoc-LHS}
\big( (\hat{m}_1, \hat{f}_1) \bullet (\hat{m}_2, \hat{f}_2) \big)  \bullet (\hat{m}_3, \hat{f}_3) = 
\big(2 \hat{q} \hat{m}_3 + \hat{q} + \hat{m}_3, \hat{g} E_{\hat{q},\hat{g}} (\hat{f}_3) \big)
\end{equation}
and 
\begin{equation}
\label{assoc-RHS}
(\hat{m}_1, \hat{f}_1) \bullet \big( (\hat{m}_2, \hat{f}_2) \bullet (\hat{m}_3, \hat{f}_3)  \big) =
(2 \hat{m}_1 \hat{k} + \hat{m}_1 + \hat{k}, \hat{f}_1 E_{\hat{m}_1,\hat{f}_1} (\hat{h}) ),
\end{equation}
where $(\hat{q}, \hat{g}) :=(\hat{m}_1, \hat{f}_1) \bullet (\hat{m}_2, \hat{f}_2)$ and 
$(\hat{k}, \hat{h}) := (\hat{m}_2, \hat{f}_2) \bullet (\hat{m}_3, \hat{f}_3)$. 

Using $\hat{q} := 2 \hat{m}_1 \hat{m}_2 + \hat{m}_1 + \hat{m}_2$ and 
$\hat{k} := 2 \hat{m}_2 \hat{m}_3 + \hat{m}_2 + \hat{m}_3$, it is easy to see that 
$$
2 \hat{q} \hat{m}_3 + \hat{q} + \hat{m}_3 = 2 \hat{m}_1 \hat{k} + \hat{m}_1 + \hat{k}.
$$

Using \eqref{E-homomorphism} and the fact that $E_{\hat{m_1},\hat{f}_1}$ is an endomorphism 
of $\wh{\F}_2$, we can rewrite $\hat{g} E_{\hat{q},\hat{g}} (\hat{f}_3)$ as follows 
$$
\hat{g} E_{\hat{q},\hat{g}} (\hat{f}_3) = 
\hat{f_1} E_{\hat{m_1},\hat{f}_1}(\hat{f}_2)\, 
E_{\hat{m_1},\hat{f}_1} \circ E_{\hat{m_2},\hat{f}_2} (\hat{f}_3) =
\hat{f_1} E_{\hat{m_1},\hat{f}_1}\big( \hat{f}_2 E_{\hat{m_2},\hat{f}_2} (\hat{f}_3) \big).
$$
Thus $\hat{g} E_{\hat{q},\hat{g}} (\hat{f}_3) = \hat{f}_1 E_{\hat{m}_1,\hat{f}_1} (\hat{h})$ and 
the associativity of $\bullet$ is proved. 

Since $E_{0, 1_{\wh{\F}_2}} = \id_{\wh{\F}_2}$, the last statement of the proposition follows from 
\eqref{E-homomorphism}.  
\end{proof}

\begin{remark}  
\label{rem:cyclotomic}
It is easy to see that, if $(\hat{m}, \hat{f}) = (\hat{m}_1, \hat{f}_1) \bullet (\hat{m}_2, \hat{f}_2) $, then 
\begin{equation}
\label{cyclotomic-hat}
2\hat{m}+1 = (2\hat{m}_1+1) (2\hat{m}_2+1).  
\end{equation}
\end{remark}

\subsection{The monoid $\GTh_{gen, mon}$ and the group $\GTh_{gen}$} 
\label{sec:GTh-gen-mon-GTh-gen}

Let us denote by $\GTh_{gen, mon}$ the subset of $\Zhat \times \wh{\F}_2$ that consists of pairs
\begin{equation}
\label{the-pairs-hat}
(\hat{m}, \hat{f}) \in \Zhat \times [\wh{\F}_2, \wh{\F}_2]^{top.\,cl.}
\end{equation}
satisfying the hexagon relations 
\begin{equation}
\label{hexa1-hat}
\si_1^{2 \hat{m} + 1}   \hat{f}^{-1} \si_2^{2 \hat{m} + 1}  \hat{f} =   
\hat{f}^{-1} \si_1 \si_2\, x_{12}^{-\hat{m}} c^{\hat{m}}\,,
\end{equation}
\begin{equation}
\label{hexa11-hat}
\hat{f}^{-1} \si_2^{2\hat{m}+1} \hat{f} \, \si_1^{2\hat{m}+1}  = \si_2 \si_1 x_{23}^{-\hat{m}} c^{\hat{m}} \, \hat{f} \,.
\end{equation}

Let us prove that\footnote{See \cite[Lemma 1]{Leila-survey}.}, 
\begin{prop}  
\label{prop:endom-B3-hat}
For every $(\hat{m}, \hat{f}) \in \GTh_{gen, mon}$, the formulas
\begin{equation}
\label{Tmf-hat}
T_{\hat{m}, \hat{f}} (\si_1):= \si_1^{2\hat{m}+1}\,, \qquad
T_{\hat{m}, \hat{f}} (\si_2):= \hat{f}^{-1} \si_2^{2\hat{m}+1} \hat{f}
\end{equation}
define a group homomorphism $T_{\hat{m}, \hat{f}}  : \B_3 \to \wh{\B}_3$ such that 
\begin{equation}
\label{Tmf-c}
T_{\hat{m}, \hat{f}} (c) = c^{2\hat{m}+1}\,.
\end{equation}
The homomorphism $T_{\hat{m}, \hat{f}}$ extends uniquely to a continuous 
endomorphism of $\wh{\B}_3$ and
\begin{equation}
\label{Tmf-hat-F2}
T_{\hat{m}, \hat{f}} \big|_{\wh{\F}_2} = E_{\hat{m}, \hat{f}}.
\end{equation}
\end{prop}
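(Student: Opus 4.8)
The plan is to verify that $T_{\hat m,\hat f}$ is a well-defined homomorphism $\B_3\to\wh\B_3$ by checking the single braid relation $\si_1\si_2\si_1=\si_2\si_1\si_2$ on the proposed images, then to deduce \eqref{Tmf-c} and \eqref{Tmf-hat-F2} as bookkeeping consequences. First I would recall the presentation $\B_3=\lan \si_1,\si_2\mid \si_1\si_2\si_1=\si_2\si_1\si_2\ran$, so that to get a homomorphism it suffices to show
$$
\si_1^{2\hat m+1}\,\big(\hat f^{-1}\si_2^{2\hat m+1}\hat f\big)\,\si_1^{2\hat m+1}
=\big(\hat f^{-1}\si_2^{2\hat m+1}\hat f\big)\,\si_1^{2\hat m+1}\,\big(\hat f^{-1}\si_2^{2\hat m+1}\hat f\big)
$$
in $\wh\B_3$. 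The natural route is to introduce $u:=\si_1^{2\hat m+1}$, $v:=\hat f^{-1}\si_2^{2\hat m+1}\hat f$ and rewrite both hexagon relations \eqref{hexa1-hat}, \eqref{hexa11-hat} as $uv=\hat f^{-1}\si_1\si_2 x_{12}^{-\hat m}c^{\hat m}$ and $vu=\si_2\si_1 x_{23}^{-\hat m}c^{\hat m}\hat f$; then $uvu$ computed two ways (as $(uv)u$ and $u(vu)$) must agree, and likewise $vuv$. Multiplying the first rewritten relation on the right by $u$ and the second on the left by... — the cleanest is to compute $uvu = \hat f^{-1}\si_1\si_2 x_{12}^{-\hat m}c^{\hat m}\,\si_1^{2\hat m+1}$ and $vuv = \si_2\si_1 x_{23}^{-\hat m}c^{\hat m}\hat f\,\hat f^{-1}\si_2^{2\hat m+1}\hat f = \si_2\si_1 x_{23}^{-\hat m}c^{\hat m}\si_2^{2\hat m+1}\hat f$, so it remains to show these two expressions are equal in $\wh\B_3$; here I would use that $c$ is central, that $x_{12}=\si_1^2$, $x_{23}=\si_2^2$ commute with $\si_1$, $\si_2$ respectively, and the identities \eqref{si-D-si}, \eqref{conj-by-D} to move $\D$ past the generators, reducing everything to $\si_1\si_2\si_1=\si_2\si_1\si_2=\D$ together with $\D\hat f^{-1}\D^{-1}$ acting via $\theta$ on $\wh\F_2$ (the role of the condition $\hat f\in[\wh\F_2,\wh\F_2]^{top.cl.}$ is not needed for this step — it will enter only later when the pentagon-type constraint is used, so I would note that this proposition uses only the hexagons).

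Next, for \eqref{Tmf-c}: since $c=(\si_1\si_2\si_1)^2$ is central, $T_{\hat m,\hat f}(c)=\big(T_{\hat m,\hat f}(\si_1)T_{\hat m,\hat f}(\si_2)T_{\hat m,\hat f}(\si_1)\big)^2$, and one shows $T_{\hat m,\hat f}(\si_1\si_2\si_1)=c^{\hat m}\D^{2\hat m+1}\cdot(\text{something})$ — more directly, I would argue that $uvu$ is a central-times-$\D$ expression by exploiting the first computation above: evaluating $uvu=\hat f^{-1}\si_1\si_2x_{12}^{-\hat m}c^{\hat m}\si_1^{2\hat m+1}$ and simplifying using $\si_1\si_2\si_1^{2\hat m+1}=\si_1\si_2\si_1\cdot\si_1^{2\hat m}=\D\,x_{12}^{\hat m}$ and then conjugating, one gets $T_{\hat m,\hat f}(\si_1\si_2\si_1)=\hat f^{-1}\D c^{\hat m}\hat f'$ for an appropriate $\hat f'$; squaring and using centrality of $c$ and $\D^2=c$ collapses this to $c^{2\hat m+1}$. (I would double-check the exponent against Remark \ref{rem:cyclotomic}, which predicts the cyclotomic character multiplies, consistent with $T(c)=c^{2\hat m+1}$.)

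For the extension to $\wh\B_3$: $\B_3$ is residually finite, and $T_{\hat m,\hat f}$ is a homomorphism into the profinite group $\wh\B_3$, so by Corollary \ref{cor:extend-uniquely} it extends uniquely to a continuous endomorphism of $\wh\B_3$. Finally \eqref{Tmf-hat-F2}: $\wh\F_2\le\wh\B_3$ is the closure of $\lan\si_1^2,\si_2^2\ran=\lan x_{12},x_{23}\ran$; on these generators $T_{\hat m,\hat f}(x_{12})=\si_1^{2(2\hat m+1)}=x_{12}^{2\hat m+1}=E_{\hat m,\hat f}(x)$ and $T_{\hat m,\hat f}(x_{23})=\hat f^{-1}\si_2^{2(2\hat m+1)}\hat f=\hat f^{-1}y^{2\hat m+1}\hat f=E_{\hat m,\hat f}(y)$ (using $(\hat f^{-1}\si_2^{2\hat m+1}\hat f)^2=\hat f^{-1}\si_2^{2(2\hat m+1)}\hat f$), so the two continuous endomorphisms $T_{\hat m,\hat f}|_{\wh\F_2}$ and $E_{\hat m,\hat f}$ agree on a dense subgroup, hence coincide. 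The main obstacle is the first step: confirming the braid relation holds, i.e. that the two hexagon relations together force $uvu=vuv$; this is a genuine (if classical) computation with the conjugation formulas \eqref{conj-by-si1}–\eqref{conj-by-D}, and it is where one must be careful that the identity is being checked in the nonabelian group $\wh\B_3$ rather than merely in some quotient.
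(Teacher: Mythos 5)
Your overall skeleton (reduce to the single braid relation via the presentation of $\B_3$, extend by Corollary \ref{cor:extend-uniquely}, then compare $T_{\hat m,\hat f}$ and $E_{\hat m,\hat f}$ on the dense subgroup $\lan x_{12},x_{23}\ran$ of $\wh\F_2$) matches the paper, and the extension and restriction steps are fine. The gap is in the key step. You evaluate $(uv)u$ with the first hexagon and $(vu)v$ with the second; after the simplifications you describe, these become $\hat f^{-1}\D c^{\hat m}$ and $\D c^{\hat m}\hat f$ respectively, and their equality is exactly the identity $\hat f\,\te(\hat f)=1_{\wh\F_2}$ (since conjugation by $\D$ acts as $\te$ on $\wh\F_2$). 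That identity is not among the defining relations of $\GTh_{gen, mon}$ --- it is relation \eqref{H-I-hat}, a nontrivial consequence of the two hexagons --- and your plan neither proves it nor cites a proof, so as written the argument does not close. The same missing identity infects your computation of $T_{\hat m,\hat f}(c)$: squaring $\hat f^{-1}\D c^{\hat m}$ gives $\hat f^{-1}\te(\hat f)^{-1}c^{2\hat m+1}$, which equals $c^{2\hat m+1}$ only granted $\hat f\,\te(\hat f)=1$.

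The paper sidesteps this entirely: it applies the \emph{first} hexagon twice, once to $(uv)u$ and once to $v\cdot(uv)$, and both collapse to the same expression $\hat f^{-1}\D c^{\hat m}$ with no extra identity needed (see \eqref{LHS-ok}). The second hexagon is then used separately to obtain the alternative expression $T_{\hat m,\hat f}(\D)=\D c^{\hat m}\hat f$ (see \eqref{Tmf-Delta1}), and multiplying the two expressions for $T_{\hat m,\hat f}(\D)$ yields $T_{\hat m,\hat f}(c)=\D c^{\hat m}\hat f\,\hat f^{-1}\D c^{\hat m}=c^{2\hat m+1}$, again without invoking $\hat f\,\te(\hat f)=1$. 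If you want to keep your route, you can repair it by first deriving $\hat f\,\te(\hat f)=1$ from the hexagons: evaluate the literal product $uvu$ once as $(uv)u$ via the first hexagon (getting $\hat f^{-1}\D c^{\hat m}$) and once as $u(vu)$ via the second (getting $\D\hat f c^{\hat m}$); since these are two evaluations of the same group element, $\hat f^{-1}\D=\D\hat f$ follows --- this is the profinite analogue of Proposition \ref{prop:H-I}. One further small point: identities such as $\D\si_1^{\hat t}=\si_2^{\hat t}\D$ for $\hat t\in\Zhat$, which you use implicitly when moving $\D$ past profinite powers, require the short density argument given for \eqref{Delta-hat-hat-Delta}.
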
  
\begin{proof}
We need to verify that 
\begin{equation}
\label{relation-ok}
T_{\hat{m}, \hat{f}} (\si_1) T_{\hat{m}, \hat{f}} (\si_2) T_{\hat{m}, \hat{f}} (\si_1) 
\overset{?}{=}
T_{\hat{m}, \hat{f}} (\si_2) T_{\hat{m}, \hat{f}} (\si_1) T_{\hat{m}, \hat{f}} (\si_2) 
\end{equation}
or equivalently 
\begin{equation}
\label{relation-ok1}
\si_1^{2\hat{m}+1}\, \hat{f}^{-1} \si_2^{2\hat{m}+1} \hat{f} \, \si_1^{2\hat{m}+1} 
\overset{?}{=}
 \hat{f}^{-1} \si_2^{2\hat{m}+1} \hat{f} \, 
 \si_1^{2\hat{m}+1} \,  \hat{f}^{-1} \si_2^{2\hat{m}+1} \hat{f} \,.
\end{equation}

Applying \eqref{hexa1-hat} to the left hand side of \eqref{relation-ok1}, we get 
\begin{equation}
\label{LHS-ok}
\si_1^{2\hat{m}+1}\, \hat{f}^{-1} \si_2^{2\hat{m}+1} \hat{f} \, \si_1^{2\hat{m}+1}  = 
\hat{f}^{-1} \si_1 \si_2\, x_{12}^{-\hat{m}} c^{\hat{m}}
 \, \si_1^{2\hat{m}+1} = \hat{f}^{-1} \D c^{\hat{m}}\,.
\end{equation}

To take care of the right hand side of \eqref{relation-ok1} we notice that, 
for every $\hat{t} \in \Zhat$,  
\begin{equation}
\label{Delta-hat-hat-Delta}
\D \si_1^{\hat{t}} = \si_2^{\hat{t}} \D.
\end{equation}

Indeed, for every $\N \in \NFI(\B_3)$, there exists $t_{\N} \in \ZZ$ such that 
$\hcP_{\N}(\D \si_1^{\hat{t}}) = \D \si_1^{t_{\N}} \N$ and 
$\hcP_{\N}(\si_2^{\hat{t}} \D) = \si_2^{t_{\N}} \D \N$. Since 
$\D \si_1^{k} =  \si_2^{k} \D$ for every integer $k$, 
relation \eqref{Delta-hat-hat-Delta} holds. 

Applying \eqref{hexa1-hat} to the right hand side of \eqref{relation-ok1} 
and using \eqref{Delta-hat-hat-Delta}, we get 
$$
 \hat{f}^{-1} \si_2^{2\hat{m}+1} \hat{f} \, \si_1^{2\hat{m}+1} \,  \hat{f}^{-1} \si_2^{2\hat{m}+1} \hat{f} = 
 \hat{f}^{-1} \si_2^{2\hat{m}+1} \hat{f} 
 \hat{f}^{-1} \si_1 \si_2\, x_{12}^{-\hat{m}} c^{\hat{m}} = 
$$
$$
\hat{f}^{-1} \si_2^{2\hat{m}}\, \D  x_{12}^{-\hat{m}} c^{\hat{m}} = 
\hat{f}^{-1} \si_2^{2\hat{m}}\, x_{23}^{-\hat{m}} \D  c^{\hat{m}} = 
\hat{f}^{-1} \D c^{\hat{m}}\,.
$$

Combining this result with \eqref{LHS-ok}, we see that relation 
\eqref{relation-ok1} indeed holds. 

Due to \eqref{LHS-ok}, we have 
\begin{equation}
\label{Tmf-Delta}
T_{\hat{m}, \hat{f}} (\D) = \hat{f}^{-1} \D c^{\hat{m}}\,.
\end{equation}

Applying \eqref{hexa11-hat} to $T_{\hat{m}, \hat{f}} (\D) 
= \si_1^{2\hat{m}+1}\, \hat{f}^{-1} \si_2^{2\hat{m}+1} \hat{f} \, \si_1^{2\hat{m}+1} $
and using \eqref{Delta-hat-hat-Delta}, we see that 
\begin{equation}
\label{Tmf-Delta1}
T_{\hat{m}, \hat{f}} (\D) = \si_1^{2\hat{m}+1}\,  \si_2 \si_1 x_{23}^{-\hat{m}} c^{\hat{m}} \, \hat{f} =
 \si_1^{2\hat{m}}  \D x_{23}^{-\hat{m}} c^{\hat{m}} \, \hat{f} = \D c^{\hat{m}} \, \hat{f}\,.
\end{equation}

Combining \eqref{Tmf-Delta} with \eqref{Tmf-Delta1}, we get 
$$
T_{\hat{m}, \hat{f}} (c) = T_{\hat{m}, \hat{f}} (\D) T_{\hat{m}, \hat{f}} (\D)   = 
 \D c^{\hat{m}} \, \hat{f}  \hat{f}^{-1} \D c^{\hat{m}} = c^{2 \hat{m} + 1}.
$$ 

Thus \eqref{Tmf-c} is proved. 

The third statement of the proposition follows from Corollary \ref{cor:extend-uniquely} and 
the proof of the last statement is straightforward. 
\end{proof}

\bigskip

\begin{prop}  
\label{prop:GT-gen-mon-submonoid}
The subset $\GTh_{gen, mon}$ of $\Zhat \times \wh{\F}_2$ is 
a submonoid of $\big(\Zhat \times \wh{\F}_2, \bullet \big)$. 
The assignment 
\begin{equation}
\label{GTh-mon-to-End-F2hat}
(\hat{m}, \hat{f}) \mapsto E_{\hat{m}, \hat{f}}
\end{equation}
defines an injective homomorphism of monoids from $\GTh_{gen, mon}$ to 
the monoid of continuous endomorphisms of $\wh{\F}_2$.
Similarly, the assignment 
\begin{equation}
\label{GTh-mon-to-End-B3hat}
(\hat{m}, \hat{f}) \mapsto T_{\hat{m}, \hat{f}}
\end{equation}
defines an injective homomorphism of monoids from $\GTh_{gen, mon}$ to 
the monoid of continuous endomorphisms of $\wh{\B}_3$. 
\end{prop}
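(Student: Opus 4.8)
The plan is to verify the three claims in turn, building on Propositions~\ref{prop:Z-F2-monoid} and~\ref{prop:endom-B3-hat}. First I would show that $\GTh_{gen,mon}$ is closed under $\bullet$. Take $(\hat m_1,\hat f_1),(\hat m_2,\hat f_2)\in\GTh_{gen,mon}$ and set $(\hat m,\hat f):=(\hat m_1,\hat f_1)\bullet(\hat m_2,\hat f_2)$. That $\hat f$ lies in $[\wh\F_2,\wh\F_2]^{top.\,cl.}$ is immediate: $\hat f=\hat f_1\,E_{\hat m_1,\hat f_1}(\hat f_2)$, $\hat f_1$ is a commutator by hypothesis, and $E_{\hat m_1,\hat f_1}$ is a continuous endomorphism of $\wh\F_2$, hence maps the closed commutator subgroup into itself, so $E_{\hat m_1,\hat f_1}(\hat f_2)\in[\wh\F_2,\wh\F_2]^{top.\,cl.}$ as well. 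The remaining verification is that the hexagon relations \eqref{hexa1-hat}, \eqref{hexa11-hat} are preserved. Here the right idea is to interpret both hexagons, together with $E$ and $T$, through the endomorphism picture. By Proposition~\ref{prop:endom-B3-hat} and equation \eqref{E-homomorphism}, we have $T_{\hat m_1,\hat f_1}\circ T_{\hat m_2,\hat f_2}=T_{\hat m,\hat f}$ on $\wh\B_3$ (agreement on $\si_1$ comes from $\si_1^{(2\hat m_1+1)(2\hat m_2+1)}=\si_1^{2\hat m+1}$ via Remark~\ref{rem:cyclotomic}; agreement on $\si_2$ follows from \eqref{Tmf-hat-F2} and \eqref{E-homomorphism}; one also uses \eqref{Tmf-c} to track $c$). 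One then shows that the hexagon relations for $(\hat m,\hat f)$ amount exactly to the statement that $T_{\hat m_1,\hat f_1}$ applied to the hexagon relations for $(\hat m_2,\hat f_2)$ gives valid identities in $\wh\B_3$ — which it does, because $T_{\hat m_1,\hat f_1}$ is a (continuous) homomorphism and $\si_1\si_2$, $x_{12}$, $x_{23}$, $c$ have controlled images under it (by \eqref{Tmf-Delta}, \eqref{Tmf-c}, and the compatibility of $E_{\hat m_1,\hat f_1}$ with conjugation).

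Concretely, to check \eqref{hexa1-hat} for $(\hat m,\hat f)$ I would start from the hexagon \eqref{hexa1-hat} for $(\hat m_2,\hat f_2)$, apply the endomorphism $T_{\hat m_1,\hat f_1}$ to both sides, and rewrite: the left-hand side becomes $T_{\hat m_1,\hat f_1}(\si_1)^{2\hat m_2+1}\,E_{\hat m_1,\hat f_1}(\hat f_2)^{-1}\,T_{\hat m_1,\hat f_1}(\si_2)^{2\hat m_2+1}\,E_{\hat m_1,\hat f_1}(\hat f_2)$, and substituting $T_{\hat m_1,\hat f_1}(\si_1)=\si_1^{2\hat m_1+1}$, $T_{\hat m_1,\hat f_1}(\si_2)=\hat f_1^{-1}\si_2^{2\hat m_1+1}\hat f_1$, and using \eqref{E-homomorphism} to collapse the nested $E$'s, one recovers the left-hand side of the target hexagon up to the prefactor $\hat f_1\cdot(-)\cdot\hat f_1^{-1}$; similarly for the right-hand side, using $T_{\hat m_1,\hat f_1}(\si_1\si_2)=T_{\hat m_1,\hat f_1}(\D\si_2^{-1})=\hat f_1^{-1}\D c^{\hat m_1}\si_2^{-(2\hat m_1+1)}$ from \eqref{Tmf-Delta} (and the analogous identity for $x_{12}^{-\hat m}c^{\hat m}$ from \eqref{Tmf-c}, noting $E_{\hat m_1,\hat f_1}$ fixes $x_{12}^{\hat m_1}$ appropriately), one matches the target right-hand side, again up to conjugation by $\hat f_1$. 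After conjugating by $\hat f_1^{-1}$ on both sides, the two reduce to \eqref{hexa1-hat} for $(\hat m,\hat f)$, and the careful bookkeeping of the cyclotomic exponents via Remark~\ref{rem:cyclotomic} closes it. The argument for \eqref{hexa11-hat} is entirely parallel, using \eqref{hexa11-hat} for $(\hat m_2,\hat f_2)$, \eqref{Tmf-Delta1}, and \eqref{conj-by-si2}. Since $(0,1_{\wh\F_2})$ evidently satisfies \eqref{the-pairs-hat}--\eqref{hexa11-hat} and is the identity of $\big(\Zhat\times\wh\F_2,\bullet\big)$ by Proposition~\ref{prop:Z-F2-monoid}, it is the identity of $\GTh_{gen,mon}$, so $\GTh_{gen,mon}$ is a submonoid.

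For the two homomorphism statements: that $(\hat m,\hat f)\mapsto E_{\hat m,\hat f}$ is a monoid homomorphism is immediate from Proposition~\ref{prop:Z-F2-monoid} restricted to the submonoid $\GTh_{gen,mon}$, and likewise $(\hat m,\hat f)\mapsto T_{\hat m,\hat f}$ is a monoid homomorphism because $T_{\hat m_1,\hat f_1}\circ T_{\hat m_2,\hat f_2}=T_{\hat m,\hat f}$ as established above (with the identity going to $\id_{\wh\B_3}$). Injectivity is the only remaining point: if $E_{\hat m,\hat f}=E_{\hat m',\hat f'}$, then comparing images of $x$ gives $x^{2\hat m+1}=x^{2\hat m'+1}$ in $\wh\F_2$; since $x$ has infinite order and the closure of $\langle x\rangle$ in $\wh\F_2$ is isomorphic to $\Zhat$ (so $2\hat m+1=2\hat m'+1$ in $\Zhat$, hence $\hat m=\hat m'$), and then comparing images of $y$ gives $\hat f^{-1}y^{2\hat m+1}\hat f=\hat f'^{-1}y^{2\hat m+1}\hat f'$, so $\hat f'\hat f^{-1}$ centralizes $y^{2\hat m+1}$; since $2\hat m+1$ is — as recorded after \eqref{cyclotomic-hat} and used throughout the theory — invertible enough that $\langle y^{2\hat m+1}\rangle$ is dense in $\overline{\langle y\rangle}$, and the centralizer of $\overline{\langle y\rangle}$ in $\wh\F_2$ is $\overline{\langle y\rangle}$ itself, we get $\hat f'\hat f^{-1}\in\overline{\langle y\rangle}$; finally the requirement $\hat f,\hat f'\in[\wh\F_2,\wh\F_2]^{top.\,cl.}$ forces $\hat f'\hat f^{-1}\in\overline{\langle y\rangle}\cap[\wh\F_2,\wh\F_2]^{top.\,cl.}=\{1\}$ (the abelianization of $\wh\F_2$ is $\Zhat^2$ with $y$ mapping to a generator of the second factor), so $\hat f=\hat f'$. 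Injectivity of $(\hat m,\hat f)\mapsto T_{\hat m,\hat f}$ then follows from \eqref{Tmf-hat-F2}: $T_{\hat m,\hat f}\big|_{\wh\F_2}=E_{\hat m,\hat f}$, so $T_{\hat m,\hat f}=T_{\hat m',\hat f'}$ implies $E_{\hat m,\hat f}=E_{\hat m',\hat f'}$, hence $(\hat m,\hat f)=(\hat m',\hat f')$.

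I expect the main obstacle to be the hexagon-preservation computation under $\bullet$: the conjugation-by-$\hat f_1$ bookkeeping and the correct use of the cyclotomic relation $2\hat m+1=(2\hat m_1+1)(2\hat m_2+1)$ together with \eqref{Tmf-Delta}, \eqref{Tmf-Delta1}, \eqref{Tmf-c} must all line up simultaneously. The cleanest route is probably to phrase everything via $T_{\hat m_1,\hat f_1}\circ T_{\hat m_2,\hat f_2}=T_{\hat m,\hat f}$ and apply $T_{\hat m_1,\hat f_1}$ to the hexagon identities of the second pair, rather than manipulating the explicit words directly; this reduces the verification to the functoriality already packaged in Propositions~\ref{prop:Z-F2-monoid} and~\ref{prop:endom-B3-hat}.
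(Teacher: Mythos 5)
Your overall strategy matches the paper's: close under $\bullet$ by applying $T_{\hat m_1,\hat f_1}$ to the hexagon identities of the second pair and then invoking the hexagon identities of the first pair, verify the identity element, and deduce the homomorphism and injectivity statements for $T_{\hat m,\hat f}$ from those for $E_{\hat m,\hat f}$ via \eqref{Tmf-hat-F2}. Two of your details need attention, and the second one is a genuine gap.

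First, a small point: in the hexagon computation there is no residual conjugation by $\hat f_1$. After applying $T_{\hat m_1,\hat f_1}$ to the left-hand side of \eqref{hexa1-hat} for $(\hat m_2,\hat f_2)$ and expanding, you get
\[
\si_1^{(2\hat m_1+1)(2\hat m_2+1)}\,E_{\hat m_1,\hat f_1}(\hat f_2)^{-1}\hat f_1^{-1}\,\si_2^{(2\hat m_1+1)(2\hat m_2+1)}\,\hat f_1\,E_{\hat m_1,\hat f_1}(\hat f_2),
\]
which is \emph{already} $\si_1^{2\hat m+1}\hat f^{-1}\si_2^{2\hat m+1}\hat f$ because $\hat f=\hat f_1\,E_{\hat m_1,\hat f_1}(\hat f_2)$; the inner $\hat f_1^{\pm 1}$'s are absorbed into $\hat f^{\pm 1}$, not peeled off as a conjugation. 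There are also no nested $E$'s here — the only $E$ appearing is $E_{\hat m_1,\hat f_1}(\hat f_2)$. The right-hand side is handled likewise, by plugging in the hexagon relation for $(\hat m_1,\hat f_1)$ and collecting exponents on $x_{12}$ and $c$ via \eqref{cyclotomic-hat}.

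The real gap is in the injectivity of $(\hat m,\hat f)\mapsto E_{\hat m,\hat f}$. You argue that since $2\hat m+1$ is ``invertible enough,'' the cyclic closed subgroup generated by $y^{2\hat m+1}$ is dense in $\overline{\langle y\rangle}$, and then assert that the centralizer of $\overline{\langle y\rangle}$ in $\wh\F_2$ equals $\overline{\langle y\rangle}$. Neither step is justified in the setting of this proposition. The statement is about $\GTh_{gen,mon}$, \emph{not} $\GTh_{gen}$, and nothing in the definition of $\GTh_{gen,mon}$ forces $2\hat m+1$ to be a unit of $\Zhat$ (that constraint only appears once we pass to invertible elements, i.e.\ to $\GTh_{gen}$). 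So the density claim can fail; in particular, $2\hat m+1$ may vanish in $\ZZ_p$ for odd $p$. What one can always say, since $2\hat m+1$ is odd, is that its image in $\ZZ_2$ is a unit, hence $2\hat m+1\neq 0$ in $\Zhat$ and $y^{2\hat m+1}\neq 1$. That is all the paper uses, because the key ingredient — which you omit entirely — is Theorem B of Herfort and Ribes \cite{HerfortRibes} (in the form: the centralizer in $\wh\F_2$ of a nontrivial element of the closed free factor $\overline{\langle y\rangle}$ is contained in $\overline{\langle y\rangle}$). Your ``the centralizer of $\overline{\langle y\rangle}$ is $\overline{\langle y\rangle}$ itself'' is precisely this nontrivial fact about free profinite products, and it cannot be waved through as an elementary consequence of your density remark (which is itself not available). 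Once Herfort--Ribes is invoked, the conclusion $\hat f_1\hat f_2^{-1}\in\overline{\langle y\rangle}\cap[\wh\F_2,\wh\F_2]^{top.\,cl.}=\{1\}$ goes through as you describe, so the rest of the injectivity argument, and the deduction for $T$, are fine.
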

\begin{proof}
Let $(\hat{m}_1, \hat{f}_1), (\hat{m}_2, \hat{f}_2) \in \GTh_{gen, mon}$ and
$(\hat{m}, \hat{f}) := (\hat{m}_1, \hat{f}_1) \bullet (\hat{m}_2, \hat{f}_2)$. 

Since $E_{\hat{m}_1, \hat{f}_1}$ is a continuous group homomorphism and 
$\hat{f}_2 \in [\wh{\F}_2, \wh{\F}_2]^{top.\,cl.}$, 
$E_{\hat{m}_1, \hat{f}_1}(\hat{f}_2)$ also belongs to $[\wh{\F}_2, \wh{\F}_2]^{top.\,cl.}$.
Hence 
$$
\hat{f} := \hat{f}_1 E_{\hat{m}_1, \hat{f}_1}(\hat{f}_2) ~\in~ [\wh{\F}_2, \wh{\F}_2]^{top.\,cl.}\,.
$$

\bigskip

Let us prove that the pair $(\hat{m}, \hat{f})$ satisfies hexagon relations 
\eqref{hexa1-hat} and  \eqref{hexa11-hat}.

Applying $T_{\hat{m}_1, \hat{f}_1}$ to the first hexagon relation for $(\hat{m}_2, \hat{f}_2)$ 
and using identities \eqref{Tmf-c}, \eqref{Tmf-hat-F2} we get 
\begin{equation}
\label{hexa1-mf-hat}
\begin{array}{c}
\si_1^{(2 \hat{m_2} + 1)(2 \hat{m_1} + 1)}  E_{\hat{m}_1, \hat{f}_1}(\hat{f}_2)^{-1} \hat{f}_1^{-1}  
\si_2^{(2\hat{m}_2 + 1)(2 \hat{m_1} + 1)}  \hat{f}_1 E_{\hat{m}_1, \hat{f}_1}(\hat{f}_2) \, = \\[0.28cm]   
E_{\hat{m}_1, \hat{f}_1}(\hat{f}_2)^{-1}  \si_1^{2\hat{m}_1+1}  
\hat{f}_1^{-1} \si_2^{2\hat{m}_1+1}  \hat{f}_1\, 
x_{12}^{-\hat{m}_2(2 \hat{m}_1+1)} c^{\hat{m}_2(2 \hat{m}_1+1)}\,.
\end{array}
\end{equation}

Using \eqref{cyclotomic-hat}, the first hexagon relation for $(\hat{m}_1, \hat{f}_1)$ and \eqref{hexa1-mf-hat}, we get
$$
\si_1^{2 \hat{m} + 1} \hat{f}^{-1} 
\si_2^{2\hat{m} + 1}  \hat{f} \, = \,
\hat{f}^{-1} \si_1 \si_2
x_{12}^{-\hat{m}} c^{\hat{m}}\,.
$$
 
Thus the pair $(\hat{m}, \hat{f})$ satisfies \eqref{hexa1-hat}.  

Similarly, applying $T_{\hat{m}_1, \hat{f}_1}$ to the second hexagon relation for $(\hat{m}_2, \hat{f}_2)$,  
and using identities \eqref{Tmf-c}, \eqref{Tmf-hat-F2}, the second hexagon relation for $(\hat{m}_1, \hat{f}_1)$ and  
\eqref{cyclotomic-hat}, one can show that the pair $(\hat{m}, \hat{f})$ also satisfies \eqref{hexa11-hat}. 

We proved that the subset $\GTh_{gen, mon}$ is closed with respect to the binary operation $\bullet$. 

It is easy to see that the pair $(0, 1_{\wh{\F}_2})$ satisfies hexagon relations \eqref{hexa1-hat}
and  \eqref{hexa11-hat}.  Thus the first statement of the proposition is proved. 

Due to the second statement of Proposition \ref{prop:Z-F2-monoid}, the assignment in 
\eqref{GTh-mon-to-End-F2hat} is a homomorphism of monoids. To prove that this homomorphism 
is injective\footnote{A similar statement was mentioned in \cite[Section 0.1]{HS-fund-groups} without a proof.}, 
we will use Theorem B  from paper \cite{HerfortRibes} by W. Herfort and L. Ribes.   

If $E_{\hat{m}_1, \hat{f}_1} = E_{\hat{m}_2, \hat{f}_2}$, then 
\begin{equation}
\label{E-equals-E}
x^{2\hat{m}_1 + 1} = x^{2\hat{m}_2 + 1}, \qquad 
\hat{f}_1^{-1} y^{2\hat{m}_1 + 1} \hat{f}_1 = \hat{f}_2^{-1} y^{2\hat{m}_2 + 1} \hat{f}_2\,.
\end{equation}

The first equation in \eqref{E-equals-E} implies that $x^{2(\hat{m}_2- \hat{m}_1)} = 1$ and 
hence $2(\hat{m}_2- \hat{m}_1) = 0$.  Since $\ZZ_p$ is an integral domain for every prime $p$
and $\displaystyle \wh{\ZZ} ~\cong \prod_{p \txt{ is prime}} \ZZ_p$, we conclude that 
$\hat{m}_1 = \hat{m}_2$.   

We set $\hat{m} : = \hat{m}_1= \hat{m}_2$ and $\hat{w} = \hat{f}_1 \hat{f}_2^{-1}$ 
The second equation in \eqref{E-equals-E} implies that $\hat{w}$ belongs to the centralizer 
of $y^{2\hat{m} + 1}$. 

We consider the subgroup $\{y^{\hat{n}} : \hat{n} \in \Zhat \} \le \wh{\F}_2$ and notice that, 
for every $\hat{m} \in \Zhat$, $y^{2\hat{m} + 1}$ is a non-trivial element of $\{y^{\hat{n}} : \hat{n} \in \Zhat \}$. 
Indeed, the component of $2\hat{m} + 1$ in $\ZZ_2$ is a unit in $\ZZ_2$. Therefore, $2\hat{m}+1$
cannot be zero in $\Zhat$ and hence $y^{2\hat{m} + 1} \neq 1$.

Applying \cite[Theorem B]{HerfortRibes} to $\hat{w} \in C_{\wh{\F}_2}( y^{2\hat{m} + 1})$, we conclude that 
$\hat{w} \in \{y^{\hat{n}} : \hat{n} \in \Zhat \}$. 

Since $\hat{f}_1, \hat{f}_2 \in  [\wh{\F}_2, \wh{\F}_2]^{top.\,cl.}$ and the intersection 
$\{y^{\hat{n}} : \hat{n} \in \Zhat \} \cap  [\wh{\F}_2, \wh{\F}_2]^{top.\,cl.}$ is 
trivial\footnote{To prove that the subgroup 
$\{y^{\hat{n}} : \hat{n} \in \Zhat \} \cap  [\wh{\F}_2, \wh{\F}_2]^{top.\,cl.}$ is trivial, consider 
homomorphisms $\psi$ from $\F_2$ to finite groups such that $\psi(x) = 1$.}, we conclude that 
$\hat{w} = 1$ and hence $\hat{f}_2 = \hat{f}_1$.  

We proved that the homomorphism of monoids $\GTh_{gen, mon} \to \End(\wh{\F}_2)$ is injective. 

\bigskip

To prove that the assignment in \eqref{GTh-mon-to-End-B3hat} is a homomorphism 
of monoids, we need to show that, 
\begin{equation}
\label{Tmf-hat-identity}
T_{0, 1_{\wh{\F}_2} }= \id_{\wh{\B}_3}
\end{equation}
and, for all $(\hat{m}_1, \hat{f}_1), \, (\hat{m}_2, \hat{f}_2) \in \GTh_{gen, mon}$, we have
\begin{equation}
\label{Tmf-hat-functor}
T_{\hat{m}_1, \hat{f}_1} \circ T_{\hat{m}_2, \hat{f}_2} = T_{\hat{m}, \hat{f}},
\end{equation}
where $(\hat{m}, \hat{f})= (\hat{m}_1, \hat{f}_1) \bullet (\hat{m}_2, \hat{f}_2)$.

Applying $T_{\hat{m}_1, \hat{f}_1} \circ T_{\hat{m}_2, \hat{f}_2}$ and $T_{\hat{m}, \hat{f}}$ 
to the generators $\si_1, \si_2$ of $\B_3$, we see that 
$$
T_{\hat{m}_1, \hat{f}_1} \circ T_{\hat{m}_2, \hat{f}_2} \big|_{\B_3} = 
T_{\hat{m}, \hat{f}} \big|_{\B_3}.
$$

Since the maps  $T_{\hat{m}_1, \hat{f}_1} \circ T_{\hat{m}_2, \hat{f}_2}$ and 
$T_{\hat{m}, \hat{f}}$ are continuous, they agree on a dense subset $\B_3$ of $\wh{\B}_3$ and 
$\wh{\B}_3$ is Hausdorff, equation \eqref{Tmf-hat-functor} holds. 

The same argument works for \eqref{Tmf-hat-identity}.

The injectivity of the homomorphism $\GTh_{gen, mon} \to \End(\wh{\B}_3)$ follows 
from the injectivity of the homomorphism $\GTh_{gen, mon} \to \End(\wh{\F}_2)$ and 
identity \eqref{Tmf-hat-F2}. 
\end{proof}

\begin{defi}  
\label{dfn:GTh-gen}
$\GTh_{gen}$ is the group of invertible elements of the monoid $\GTh_{gen, mon}$.
\end{defi}  

\begin{remark}  
\label{rem:GTh-gen-PaB}
As far as we know, the group $\GTh_{gen}$ was introduced in \cite{HS-fund-groups}
and, in \cite{HS-fund-groups}, it is denoted by $\GTh_0$. More precisely, $\GTh_0$ consists 
of elements $(\hat{m}, \hat{f}) \in \Zhat \times [\wh{\F}_2, \wh{\F}_2]^{top.\,cl.}$ satisfying 
\begin{equation}
\label{H-I-hat}
\hat{f} \te(\hat{f}) = 1_{\wh{\F}_2}\,,
\end{equation}
\begin{equation}
\label{H-II-hat}
\tau^2(y^{\hat{m}} \hat{f})  \tau(y^{\hat{m}} \hat{f})  y^{\hat{m}} \hat{f}   = 1_{\wh{\F}_2}\,,
\end{equation}
and the appropriate invertibility condition. Please see Section \ref{sec:simple-hexagons}, in which 
we prove that $\GTh_{gen}$ indeed coincides with $\GTh_0$ 
introduced in \cite[Introduction]{HS-fund-groups}.
\end{remark}
\begin{remark}  
\label{rem:topology}
Since $\Zhat \times \wh{\F}_2$ is naturally a topological space and 
$\GTh_{gen}$ is a subset of  $\Zhat \times \wh{\F}_2$, the set $\GTh_{gen}$
is equipped with the subset topology. It is not obvious that $\GTh_{gen}$ is a topological 
group with respect to this topology. This statement follows easily from Theorem 
\ref{thm:lim-ML} proved in Section \ref{sec:ML}. 
\end{remark}
\begin{remark}  
\label{rem:PaB}
Using \cite[Theorem 6.2.4]{Fresse1} (see also \cite[Appendix A.3]{GTshadows}),
one can show that $\GTh_{gen}$ is a subgroup of the group 
$\GTh_{\le 3}$ of continuous automorphisms of the truncation 
$\wh{\PaB}^{\le 3}$ of the operad $\wh{\PaB}$.
\end{remark}
\begin{remark}  
\label{rem:virtual-cyclotomic}
It is easy to see that, for every $(\hat{m}, \hat{f}) \in \GTh_{gen}$, 
the endomorphism $E_{\hat{m}, \hat{f}}$ (resp. $T_{\hat{m}, \hat{f}}$) of $\wh{\F}_2$ 
(resp. $\wh{\B}_3$) is invertible. Moreover, due to Proposition \ref{prop:GT-gen-mon-submonoid}, 
the assignments 
$$
(\hat{m}, \hat{f})  \to E_{\hat{m}, \hat{f}}, \qquad 
(\hat{m}, \hat{f})  \to T_{\hat{m}, \hat{f}} 
$$
are injective group homomorphisms from $\GTh_{gen}$ to the group 
of continuous automorphisms of $\wh{\F}_2$ and $\wh{\B}_3$, respectively. 
Due to Remark \ref{rem:cyclotomic}, the formula 
\begin{equation}
\label{virtual-cyclotomic}
\chi_{vir}(\hat{m}, \hat{f}) := 2\hat{m}+1
\end{equation}
defines a group homomorphism $\chi_{vir} : \GTh_{gen} \to  \Zhat^{\times}$, 
where $\Zhat^{\times}$ is the group of units of the ring $\Zhat$. 
We call $\chi_{vir}$ the \e{virtual cyclotomic character}.  Using the Ihara embedding 
$\Ih : G_{\QQ} \into \GTh$ (see \cite[Section 1]{Ihara}) and the surjectivity of the 
cyclotomic character $\chi :  G_{\QQ} \to \Zhat^{\times}$, one can show that 
the group homomorphism $\chi_{vir} : \GTh_{gen} \to  \Zhat^{\times}$ is surjective.  
\end{remark}
\begin{remark}  
\label{rem:NikolovSegal}
Let $G$ be a profinite group with a dense finitely generated subgroup 
(e.g. $G = \wh{\F}_2$). Due to \cite[Theorem 1.1]{NikolovSegal}, every 
endomorphism of $G$ is continuous. Moreover, due to 
\cite[Theorem 1.3]{NikolovSegal}, $[G, G]$ is a closed subgroup 
of $G$. In particular, $[\wh{\F}_2, \wh{\F}_2]^{top.\,cl.} =  [\wh{\F}_2, \wh{\F}_2]$. 
However, in this paper, we do not use Theorems 1.1 and 1.3 from \cite{NikolovSegal}. 
\end{remark}

\section{The groupoid $\GTSh$}
\label{sec:GTSh}

For every $\N \in \NFI_{\PB_3}(\B_3)$, we set 
\begin{equation}
\label{N-ord}
N_{\ord} := \lcm( \ord( x_{12} \N ), \ord( x_{23} \N), \ord( c \N ) )
\end{equation}
and
\begin{equation}
\label{N-F-2}
\N_{\F_2} := \N \cap \F_2\,.
\end{equation}
It is clear that $\N_{\F_2} \in \NFI(\F_2)$.  

We say that a pair $(m,f) \in \ZZ \times \F_2 $ satisfies the hexagon relations 
modulo $\N$ if 
\begin{equation}
\label{hexa1}
\si_1^{2m+1}  \, f^{-1} \si_2^{2m+1} f \, \N ~ = ~ 
f^{-1} \si_1 \si_2 x_{12}^{-m} c^m \, \N,
\end{equation} 
\begin{equation}
\label{hexa11}
f^{-1} \si_2^{2m+1} f \, \si_1^{2m+1} \,  \N  ~=~ \si_2 \si_1 x_{23}^{-m} c^m \, f \,  \N.
\end{equation}
Since $N_{\ord}$ is the least common multiple of the orders of 
the elements $x_{12} \N$,  $x_{23} \N$, $c \N$ and $\N_{\F_2} \le \N$,
we see that, if a pair $(m,f) \in \ZZ \times \F_2$ satisfies \eqref{hexa1} and \eqref{hexa11}, 
then so does the pair $(m + t N_{\ord}, f h)$ for any $t \in \ZZ$ and any $h \in \N_{\F_2}$. 

\begin{defi}  
\label{dfn:GT-pairs}
A $\GT$-\e{pair with the target} $\N$ is a pair 
\begin{equation}
\label{GT-pair}
(m + N_{\ord} \ZZ ,f \N_{\F_2}) \in \ZZ/N_{\ord} \ZZ \times \F_2/\N_{\F_2}
\end{equation}
satisfying relations \eqref{hexa1} and \eqref{hexa11}. 
A $\GT$-pair \eqref{GT-pair} is called \e{charming} if 
\begin{itemize}

\item $2m+1$ represents a unit in the ring $\ZZ/ N_{\ord} \ZZ$ and 

\item $f \N_{\F_2} \in [\F_2/\N_{\F_2}, \F_2/\N_{\F_2}]$, or equivalently  
the coset $f \N_{\F_2}$ can be represented by an element in 
the commutator subgroup $[\F_2, \F_2]$ of $\F_2$. 

\end{itemize}
\end{defi}  
We denote by $\GT_{pr}(\N)$ (resp. $\GT^{\hs}_{pr}(\N)$) the set of $\GT$-pairs
(resp. the set of charming $\GT$-pairs) with the target $\N$. From now on, we denote 
by $[m, f]$ the $\GT$-pair represented by $(m, f) \in \ZZ \times \F_2$. 

\bigskip

The importance of the hexagon relations is emphasized by the following proposition: 
\begin{prop}  
\label{prop:T-m-f}
For every $[m,f] \in \GT_{pr}(\N)$, the formulas 
$$
T_{m,f} (\si_1):= \si_1^{2 m+1} \N, \qquad
T_{m,f} (\si_2):= f^{-1} \si_2^{2 m+1} f \N
$$
define a group homomorphism $T_{m,f}: \B_3 \to \B_3/ \N$.
\end{prop}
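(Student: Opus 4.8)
The plan is to invoke the standard presentation $\B_3 = \lan \si_1, \si_2 \mid \si_1\si_2\si_1 = \si_2\si_1\si_2 \ran$. Since $f \in \F_2 = \lan x_{12}, x_{23}\ran \le \PB_3 \le \B_3$, the cosets $\si_1^{2m+1}\,\N$ and $f^{-1}\si_2^{2m+1} f\,\N$ are honest elements of the finite group $\B_3/\N$, so by the universal property of the presentation it is enough to check that these two elements satisfy the braid relation in $\B_3/\N$, i.e.\ that
\begin{equation*}
\si_1^{2m+1}\, f^{-1}\si_2^{2m+1} f\, \si_1^{2m+1}\,\N ~=~ f^{-1}\si_2^{2m+1} f\, \si_1^{2m+1}\, f^{-1}\si_2^{2m+1} f\,\N .
\end{equation*}
This is precisely the mod-$\N$ version of the computation \eqref{relation-ok1} carried out in the proof of Proposition \ref{prop:endom-B3-hat}, with $\wh{\B}_3$ replaced by $\B_3/\N$ and the hexagon relation \eqref{hexa1-hat} replaced by its reduction \eqref{hexa1}; so I would simply repeat that argument.

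For the left-hand side, applying hexagon relation \eqref{hexa1} to the initial segment $\si_1^{2m+1} f^{-1}\si_2^{2m+1} f$ gives $f^{-1}\si_1\si_2\, x_{12}^{-m} c^m\, \si_1^{2m+1}\,\N$; since $x_{12} = \si_1^2$ and $c \in \cZ(\B_3)$, the tail collapses, $x_{12}^{-m} c^m \si_1^{2m+1} = \si_1 c^m$, and the left-hand side becomes $f^{-1}\si_1\si_2\si_1\, c^m\,\N = f^{-1}\D\, c^m\,\N$. For the right-hand side, I would group the word as $\big(f^{-1}\si_2^{2m+1} f\big)\big(\si_1^{2m+1} f^{-1}\si_2^{2m+1} f\big)$ and apply \eqref{hexa1} to the second factor, obtaining $f^{-1}\si_2^{2m+1}\si_1\si_2\, x_{12}^{-m} c^m\,\N$; using $\si_2\si_1\si_2 = \D$, the identity $\si_2^{k}\D = \D\si_1^{k}$ (an immediate consequence of \eqref{si-D-si}), and centrality of $c$, this equals $f^{-1}\D\, \si_1^{2m}\, x_{12}^{-m} c^m\,\N = f^{-1}\D\, c^m\,\N$. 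The two sides coincide, so the braid relation holds in $\B_3/\N$ and the assignment extends to a group homomorphism $T_{m,f}\colon \B_3 \to \B_3/\N$; one also checks readily that the resulting homomorphism depends only on the class $[m,f]$, since $x_{12}^{N_{\ord}} \in \N$ and $\N$ is normal in $\B_3$.

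I do not anticipate a real obstacle: this is a routine verification that mirrors Proposition \ref{prop:endom-B3-hat} almost verbatim. The only points requiring a little care are making sure $f$ is legitimately viewed as an element of $\B_3$ so that the formulas make sense, and bookkeeping with the central element $c$ and the commutation identities \eqref{si-D-si}, \eqref{conj-by-D} when moving $\D$ past powers of $\si_1$ and $\si_2$. It is worth noting that only the first hexagon relation \eqref{hexa1} enters this proof; the second hexagon relation \eqref{hexa11} is not needed for the homomorphism property, but it is exactly what one would use to compute $T_{m,f}(c) = c^{2m+1}\,\N$, in parallel with identity \eqref{Tmf-c} of Proposition \ref{prop:endom-B3-hat}.
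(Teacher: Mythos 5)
Your proof is correct and is essentially the same as the paper's: both verify the braid relation in $\B_3/\N$ by applying the first hexagon relation \eqref{hexa1} to both sides of \eqref{braid-relation}, collapsing each side to $f^{-1}\D c^m\,\N$ via the identity $\si_2\D=\D\si_1$ and centrality of $c$. Your closing observations (that only \eqref{hexa1} is used here, and that \eqref{hexa11} is the input for $T_{m,f}(c)=c^{2m+1}\N$) accurately match the structure of the paper's Propositions \ref{prop:T-m-f} and \ref{prop:T-m-f-PB-3}.
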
  
\begin{proof}
Since $\B_3 = \lan  \si_1, \si_2 \, | \,  \si_1\si_2 \si_1 =  \si_2 \si_1 \si_2 \ran$, 
it suffices to verify that 
\begin{equation}
\label{braid-relation}
T_{m,f} (\si_1) T_{m,f} (\si_2) T_{m,f} (\si_1)
\overset{?}{=} T_{m,f} (\si_2) T_{m,f} (\si_1) T_{m,f} (\si_2).
\end{equation}

Using \eqref{hexa1}, we rewrite the left hand side of \eqref{braid-relation} as 
\begin{equation}
\label{LHS-hexa1}
(\si_1^{2m+1} f^{-1} \si_2^{2m+1} f)  \si_1^{2m+1}\, \N = 
 f^{-1} \si_1 \si_2 x_{12}^{-m} c^m \,  \si_1^{2m+1}\, \N =  
 f^{-1} \D c^m \, \N,
\end{equation}
where $\D: = \si_1 \si_2 \si_1$. 

Using \eqref{hexa1} once again, we rewrite the right hand side of \eqref{braid-relation} as 
$$
f^{-1} \si_2^{2m+1} f  (\si_1^{2m+1} f^{-1} \si_2^{2m+1} f)\, \N =  
f^{-1} \si_2^{2m+1} f (f^{-1} \si_1 \si_2 x_{12}^{-m} c^m) \N = 
$$
$$
f^{-1} \si_2^{2m} \si_2  \si_1 \si_2 x_{12}^{-m} c^m\, \N  = 
f^{-1} \si_2^{2m} \D x_{12}^{-m} c^m\, \N = f^{-1}  \D c^m \, \N.
$$
In the last step, we used the identity $\si_2 \D = \D \si_1$.

Relation \eqref{braid-relation} is proved. 
\end{proof}

If we apply both hexagon relations to the left hand side of  
\eqref{braid-relation}, then we get a useful relation on the coset $f\N$. 
Indeed, due to the calculation in \eqref{LHS-hexa1}, we have 
\begin{equation}
\label{braid-rel-hexa1}
\si_1^{2m+1} f^{-1} \si_2^{2m+1} f  \si_1^{2m+1}\, \N = f^{-1} \D c^m \, \N.
\end{equation}

On the other hand, applying \eqref{hexa11} and the identity $\si_1 \D = \D \si_2$, we get 
$$
\si_1^{2m+1} (f^{-1} \si_2^{2m+1} f  \si_1^{2m+1})\, \N = 
\si_1^{2m+1}  \si_2 \si_1 c^m x_{23}^{-m} f\, \N = 
\si_1^{2m} \D c^m x_{23}^{-m} f\, \N  = \D f c^m \, \N.
$$

Comparing this result with \eqref{braid-rel-hexa1}, we conclude that
$\D f \, \N = f^{-1} \D\, \N$. Thus, using \eqref{conj-by-D}, we see that we proved 
the following statement:
\begin{prop}  
\label{prop:H-I}
Let $\N \in \NFI_{\PB_3}(\B_3)$. If a pair $(m,f) \in \ZZ \times \F_2$ satisfies 
hexagon relations \eqref{hexa1} and \eqref{hexa11} (modulo $\N$) then
\begin{equation}
\label{H-I-here}
f \te(f) \in \N,
\end{equation}
where $\te$ is the automorphism of $\F_2$ defined in \eqref{theta}. \qed
\end{prop}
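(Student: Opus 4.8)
The plan is to derive the relation $f\te(f)\in\N$ by computing one and the same element of $\B_3/\N$ in two different ways, invoking a different hexagon relation each time. The element I would use is the word
$$
W := \si_1^{2m+1}\, f^{-1}\si_2^{2m+1} f\,\si_1^{2m+1},
$$
namely the left-hand side of the braid relation \eqref{braid-relation}; I compute its coset $W\N$ by bracketing $W$ in two different ways.

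Bracketing $W = (\si_1^{2m+1} f^{-1}\si_2^{2m+1} f)\,\si_1^{2m+1}$ and applying the first hexagon relation \eqref{hexa1} to the parenthesized factor — then using that $c$ is central and that $x_{12}^{-m}=\si_1^{-2m}$ commutes with $\si_1$ — gives $W\N = f^{-1}\D c^m\,\N$; this is precisely the computation already carried out in \eqref{LHS-hexa1}. Bracketing instead $W = \si_1^{2m+1}\,(f^{-1}\si_2^{2m+1} f\,\si_1^{2m+1})$ and applying the second hexagon relation \eqref{hexa11} to the parenthesized factor gives $W\N = \si_1^{2m+1}\si_2\si_1\,x_{23}^{-m}c^m f\,\N = \si_1^{2m}\D\,x_{23}^{-m}c^m f\,\N$; pushing $\D$ past $\si_1^{2m}$ via the identity $\si_1\D=\D\si_2$ from \eqref{si-D-si} converts $\si_1^{2m}\D$ into $\D\si_2^{2m}=\D x_{23}^{m}$, which cancels the factor $x_{23}^{-m}$ and leaves $W\N = \D c^m f\,\N$.

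Equating the two answers and cancelling the central element $c^m$ yields $f^{-1}\D\,\N = \D f\,\N$. Since $f$ lies in $\F_2=\lan x_{12},x_{23}\ran$ and, by \eqref{conj-by-D}, conjugation by $\D$ interchanges $x_{12}$ and $x_{23}$, it restricts on $\F_2$ to the automorphism $\te$ of \eqref{theta}; hence $\D f\,\N = (\D f\D^{-1})\,\D\,\N = \te(f)\,\D\,\N$. Substituting gives $f^{-1}\D\,\N=\te(f)\,\D\,\N$, and since $\N\unlhd\B_3$ one may move $\N$ to the right of $\D$ on both sides and cancel $\D$, obtaining $f^{-1}\,\N=\te(f)\,\N$, i.e. $f\te(f)\in\N$. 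I do not anticipate a genuine obstacle: everything is a short manipulation with the two hexagon relations, the centrality of $c$, the braid identities \eqref{si-D-si}, and the conjugation formulas \eqref{conj-by-D}; the only thing needing care is the bookkeeping of left versus right cosets of $\N$ and the correct identification of conjugation by $\D$ with $\te$ (harmless here since $\te$ is an involution, but worth checking).
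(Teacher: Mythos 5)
Your proposal is correct and is essentially the paper's own argument: both compute the coset of $W=\si_1^{2m+1}f^{-1}\si_2^{2m+1}f\si_1^{2m+1}$ in two ways (once via \eqref{hexa1} as in \eqref{LHS-hexa1}, once via \eqref{hexa11}), equate to get $f^{-1}\D\,\N=\D f\,\N$, and finish with \eqref{conj-by-D}. You simply spell out the final identification of $\D(\cdot)\D^{-1}$ with $\te$, which the paper leaves implicit in the line ``Thus, using \eqref{conj-by-D}, we see that we proved the following statement.''
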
  
Relation \eqref{H-I-here} can also be written in the form $f(x,y) f(y,x) \in \N$.

Let $(m,f) \in \ZZ \times [\F_2,\F_2]$ and $\N \in \NFI_{\PB_3}(\B_3)$. 
It turns out that, hexagon relations \eqref{hexa1}, \eqref{hexa11} for $(m,f)$ 
(modulo $\N$) are equivalent to somewhat simpler relations. The following 
proposition establishes this equivalence.
\begin{prop} 
\label{prop:simple-hexa}
Let $\N \in \NFI_{\PB_3}(\B_3)$ and  
$\te$ and $\tau$ be the automorphisms of $\F_2$ defined in \eqref{theta} and 
\eqref{tau}, respectively. 
A pair $(m,f) \in \ZZ \times [\F_2,\F_2]$ satisfies hexagon relations 
\eqref{hexa1}, \eqref{hexa11} (modulo $\N$) if and only if
\begin{equation} 
\label{shexagon1}
f \te(f) \in \N_{\F_2}
\end{equation}
and
\begin{equation}
 \label{shexagon11}
\tau^2(y^m f) \tau(y^m f) y^mf \in \N_{\F_2}\,.
\end{equation}
\end{prop}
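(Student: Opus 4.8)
The plan is to prove both implications at once through a chain of equivalences: first rewrite the pair of hexagon relations \eqref{hexa1}, \eqref{hexa11} as two relations living in $\PB_3$, and then translate these into the pair \eqref{shexagon1}, \eqref{shexagon11} living in $\F_2/\N_{\F_2}$. Set $W := \si_1^{2m+1} f^{-1}\si_2^{2m+1} f\,\si_1^{2m+1}$. I would repeat the elementary manipulations already carried out in the proof of Proposition \ref{prop:T-m-f} and in the paragraph just before Proposition \ref{prop:H-I}: multiplying \eqref{hexa1} on the right, resp. \eqref{hexa11} on the left, by $\si_1^{2m+1}$ and using $\si_1\D = \D\si_2$, $\si_2\D = \D\si_1$ (see \eqref{si-D-si}), \eqref{conj-by-D} and the fact that powers of $\si_1$ commute, one finds that, modulo $\N$, relation \eqref{hexa1} is equivalent to $W\,\N = f^{-1}\D\, c^m\,\N$ and relation \eqref{hexa11} is equivalent to $W\,\N = \D\, f\, c^m\,\N$ (the converse of each is obtained by multiplying back by $\si_1^{-(2m+1)}$ on the appropriate side and applying \eqref{conj-by-si1}, \eqref{conj-by-si2}). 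Since, by \eqref{conj-by-D} and normality of $\N$, $f^{-1}\D c^m\,\N = \D f c^m\,\N$ is equivalent to $\D^{-1}f^{-1}\D\,\N = f\,\N$, hence to $f\,\te(f)\in\N$, and $f\,\te(f)$ automatically lies in $[\F_2,\F_2]$, the conjunction of the two hexagon relations becomes equivalent to \eqref{shexagon1} together with the single relation $W\,\N = f^{-1}\D\, c^m\,\N$.

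The second step is to translate $W\,\N = f^{-1}\D c^m\,\N$ into a relation in $\F_2/\N_{\F_2}$. Since $W$ and $\D$ have the same image in $S_3$ and $\PB_3 = \F_2\times\lan c\ran$, the element $f\,W\,c^{-m}\D^{-1}$ lies in $\PB_3$, and I would compute it explicitly by repeatedly applying the conjugation formulas \eqref{conj-by-si1}, \eqref{conj-by-si2}, using crucially that $f\in[\F_2,\F_2]$ so that the stray central factors $c$ produced when conjugating $f$ and powers of $x = x_{12}$, $y = x_{23}$ by $\si_1$, $\si_2$ all cancel (the relevant exponent sums vanish). With $z := y^{-1}x^{-1}$ this is expected to yield
\[
f\,W\,c^{-m}\D^{-1} \;=\; f(x,y)\, x^m\, f(x,z)^{-1}\, z^m\, f(y,z)\, y^m \;\in\; \F_2,
\]
so that $W\,\N = f^{-1}\D c^m\,\N$ is equivalent to $f(x,y)\, x^m\, f(x,z)^{-1}\, z^m\, f(y,z)\, y^m \in \N_{\F_2}$.

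It remains to match this with \eqref{shexagon11} under the assumption \eqref{shexagon1}. Expanding $\tau$ and $\tau^2$ on $y^m f$ gives $\tau^2(y^m f)\,\tau(y^m f)\, y^m f = x^m f(z,x)\, z^m f(y,z)\, y^m f(x,y)$, so after a cyclic rotation (harmless since $\N$ is normal) relation \eqref{shexagon11} reads $f(x,y)\, x^m\, f(z,x)\, z^m\, f(y,z)\, y^m \in \N_{\F_2}$; the only discrepancy with the relation obtained above is $f(x,z)^{-1}$ versus $f(z,x)$. To close the gap, observe that on $[\F_2,\F_2]$ the automorphism $\tau^2$ agrees with conjugation by $(\si_1\si_2)^2\in\B_3$ (again the central corrections cancel); applying this to $f\,\te(f)\in\N$, which holds by \eqref{shexagon1} and lies in $[\F_2,\F_2]$, and using normality of $\N$, we get $\tau^2(f\,\te(f)) = f(z,x)\, f(x,z)\in\N$, i.e. $f(z,x)\equiv f(x,z)^{-1}$ mod $\N$. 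Substituting this inside the product and pulling the resulting $\N$-element out to the left (normality once more) shows that, modulo \eqref{shexagon1}, the two relations are equivalent. Concatenating the equivalences of the three steps gives the proposition in both directions.

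The step I expect to be the main obstacle is the middle one: evaluating $f\,W\,c^{-m}\D^{-1}$ as a word in $x$ and $y$ requires conjugating $f$ and various powers of $x$, $y$ by $\si_1$ and $\si_2$ several times through \eqref{conj-by-si1}, \eqref{conj-by-si2}, carefully tracking the central factors $c$ and checking that, because $f\in[\F_2,\F_2]$, they cancel exactly; the rest of the argument is formal.
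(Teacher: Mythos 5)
Your proof is correct, and it organizes the argument along a route that differs from the paper's in a worthwhile way. The paper's proof translates both hexagon relations \eqref{hexa1} and \eqref{hexa11} separately into relations in $\F_2/\N_{\F_2}$ (its \eqref{hexa1-equiv} and \eqref{hexa11-equiv}), and then eliminates the redundancy using the conjugated forms \eqref{shexagon1-yz}, \eqref{shexagon1-zx} of the first simplified relation. You instead use upfront the observation already implicit in the proofs of Propositions \ref{prop:T-m-f} and \ref{prop:H-I} — that \eqref{hexa1} and \eqref{hexa11} are reversible rearrangements of $W\N = f^{-1}\D c^m\N$ and $W\N = \D f c^m\N$ respectively, so their conjunction is exactly \eqref{shexagon1} together with the single relation $W\N = f^{-1}\D c^m\N$ — which reduces the ``push the $\si_i$'s past $\F_2$-elements'' computation to one instance rather than two. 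Your explicit evaluation $fWc^{-m}\D^{-1} = f\,x^m f(x,z)^{-1}z^m f(y,z)\,y^m$ is correct (the factors of $c$ indeed cancel because the $y$-exponent sum of $f\in[\F_2,\F_2]$ is zero at each conjugation step), and your use of $\tau^2$ applied to $f\te(f)\in\N_{\F_2}$ to reconcile $f(x,z)^{-1}$ with $f(z,x)$ plays exactly the role of the paper's \eqref{shexagon1-zx}, obtained by the same conjugation device. The paper's route keeps the two hexagons on a more symmetric footing; yours buys a cleaner bookkeeping at the cost of leaning harder on the $W$-reduction and the cyclic-shift normality argument. Both hinge on the same underlying mechanism — the conjugation formulas \eqref{conj-by-si1}--\eqref{conj-by-D} and the vanishing of exponent sums for $f\in[\F_2,\F_2]$ — so neither is more elementary, but yours is arguably more economical.
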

\begin{proof} For our purposes, it is convenient to rewrite \eqref{shexagon1}
and \eqref{shexagon11} in the form 
\begin{equation} 
\label{simple-hexa1}
f(x,y) f(y,x) \in \N_{\F_2}
\end{equation}
and
\begin{equation}
\label{simple-hexa11}
x^m f(z,x) z^m f(y,z) y^m f \in \N_{\F_2}\,,
\end{equation}
where $z := y^{-1} x^{-1}$.

Using identities \eqref{conj-by-si1}, \eqref{conj-by-si2} and the 
property $f \in [\F_2, \F_2]$, one can prove that \eqref{hexa1} is equivalent to 
\begin{equation}
\label{hexa1-equiv}
x^m f(z, x) z^m f^{-1}(z, y) y^m f   ~\in~ \N_{\F_2}
\end{equation}
and \eqref{hexa11} is equivalent\footnote{For this equivalence, we also need \eqref{conj-by-D}.} to 
\begin{equation}
\label{hexa11-equiv}
x^m f^{-1}(x, z) z^m f(y, z) y^m f^{-1}(y,x)   ~\in~ \N_{\F_2}\,,
\end{equation}
where
$x:= x_{12},~ y := x_{23}, ~ z:= x_{23}^{-1} x_{12}^{-1}$. 

Moreover, conjugating \eqref{simple-hexa1} with $\si_1 \si_2$ and with $(\si_1 \si_2)^2$, and 
using the property $f \in [\F_2, \F_2]$ once again, we see that 
\begin{equation}
\label{shexagon1-yz}
f(z,y) \N_{\F_2} = f^{-1}(y,z) \N_{\F_2}
\end{equation}
and 
\begin{equation}
\label{shexagon1-zx}
f(x,z) \N_{\F_2} = f^{-1}(z,x) \N_{\F_2}.
\end{equation}

Let us assume that equations \eqref{hexa1} and \eqref{hexa11} are satisfied. 
Due to Proposition \ref{prop:H-I}, relation \eqref{simple-hexa1} is satisfied. 
Hence relation \eqref{shexagon1-yz} also holds.

Combining \eqref{hexa1-equiv} with \eqref{shexagon1-yz}, we conclude that 
\eqref{simple-hexa11} is satisfied.  

Let us now assume that \eqref{simple-hexa1} and \eqref{simple-hexa11} are satisfied.
Relation \eqref{simple-hexa1} implies \eqref{shexagon1-yz} and \eqref{shexagon1-zx}. 

Combining \eqref{simple-hexa1} with \eqref{simple-hexa11}, \eqref{shexagon1-yz} and 
\eqref{shexagon1-zx}, we conclude that \eqref{hexa1-equiv} and \eqref{hexa11-equiv} 
are satisfied. 

Since \eqref{hexa1-equiv} and \eqref{hexa11-equiv} are equivalent to \eqref{hexa1} and \eqref{hexa11}, 
the desired statement is proved. 
\end{proof}

We call \eqref{shexagon1}, \eqref{shexagon11} the \e{simplified hexagon relations}.
(See also \cite[Proposition 2.6]{JXthesis}.)

\bigskip

Let us denote by $\rho$ the standard homomorphism $\B_3 \to S_3$: 
$\rho(\si_1) := (1,2)$, $\rho(\si_2):= (2,3)$. Since $\N \le \PB_3$, the formula 
$\rho_{\N} (w \N):= \rho(w)$ defines the group homomorphism 
\begin{equation}
\label{rho-N}
\rho_{\N} : \B_3/\N \to S_3.
\end{equation}
It is easy to see that, for every $\N \in \NFI_{\PB_3}(\B_3)$ and 
$[m,f]\in \GT_{pr}(\N)$, 
\begin{equation}
\label{rho-N-Tmf}
\rho_{\N} \circ T_{m,f} = \rho. 
\end{equation}

Hence $T_{m,f}(\PB_3) \subset \PB_3/\N$. We set  
$$
T_{m,f}^{\PB_3} : = T_{m,f} \big|_{\PB_3} : \PB_3 \to \PB_3/\N 
$$
and notice that $\ker(T_{m,f}) = \ker(T_{m,f}^{\PB_3}) \in \NFI_{\PB_3}(\B_3)$. 

Due to the following proposition, the homomorphism $T_{m,f}^{\PB_3}$ comes 
from an endomorphism of $\PB_3$ for every $[m,f] \in \GT_{pr}(\N)$.
\begin{prop}  
\label{prop:T-m-f-PB-3}
Let $\N \in \NFI_{\PB_3}(\B_3)$ and $[m,f] \in \GT_{pr}(\N)$. Then 
\begin{equation}
\label{T-m-f-PB-3}
T_{m,f}^{\PB_3}(x_{12}) = x_{12}^{2m+1}\, \N, \qquad 
T_{m,f}^{\PB_3}(x_{23}) = f^{-1} x_{23}^{2m+1} f\, \N, \qquad
T_{m,f}^{\PB_3}(c) = c^{2m+1}\, \N. 
\end{equation}
\end{prop}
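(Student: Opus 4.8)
The plan is to push everything back to the three defining relations $x_{12}=\si_1^2$, $x_{23}=\si_2^2$, $c=\D^2$, and to use that $T_{m,f}:\B_3\to\B_3/\N$ (Proposition~\ref{prop:T-m-f}) is a group homomorphism whose restriction to $\PB_3$ is exactly $T_{m,f}^{\PB_3}$ (this is what \eqref{rho-N-Tmf} guarantees). Then each of the three asserted values is obtained by applying $T_{m,f}$ to the square of $\si_1$, $\si_2$, or $\D$ and simplifying.

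First I would dispose of $x_{12}$ and $x_{23}$, which are immediate from the homomorphism property and the formulas \eqref{Tmf-dfn}: indeed $T_{m,f}^{\PB_3}(x_{12})=T_{m,f}(\si_1)^2=(\si_1^{2m+1}\N)^2=\si_1^{4m+2}\N=x_{12}^{2m+1}\N$, and likewise $T_{m,f}^{\PB_3}(x_{23})=T_{m,f}(\si_2)^2=(f^{-1}\si_2^{2m+1}f\,\N)^2=f^{-1}\si_2^{4m+2}f\,\N=f^{-1}x_{23}^{2m+1}f\,\N$. Nothing beyond the definition of $T_{m,f}$ is used here.

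The value on $c=\D^2$ is the only part requiring earlier work. From the proof of Proposition~\ref{prop:T-m-f} — precisely the computation \eqref{LHS-hexa1} — one has $T_{m,f}(\D)=f^{-1}\D c^m\,\N$. Squaring and moving the central factors past each other (here $c\in\cZ(\B_3)$) gives $T_{m,f}^{\PB_3}(c)=T_{m,f}(\D)^2=f^{-1}\D f^{-1}\D\,c^{2m}\,\N$. Now I would invoke the congruence $\D f\,\N=f^{-1}\D\,\N$ established just before Proposition~\ref{prop:H-I} (equivalently, relation \eqref{H-I-here} combined with the conjugation formulas \eqref{conj-by-D}, which together give $\D f\D^{-1}\equiv f^{-1}\pmod{\N}$, hence $\D f^{-1}\,\N=f\D\,\N$); replacing the subword $\D f^{-1}$ by $f\D$ — legitimate since $\N$ is normal in $\B_3$ — turns $f^{-1}\D f^{-1}\D\,c^{2m}\,\N$ into $f^{-1}(f\D)\D\,c^{2m}\,\N=\D^2 c^{2m}\,\N=c^{2m+1}\,\N$, as desired.

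I do not anticipate a genuine obstacle: the statement is a bookkeeping consequence of facts already assembled in the preceding pages. The only point deserving attention is to use the conjugation relation for $\D$ in the correct form modulo $\N$ rather than modulo $\N_{\F_2}$ — which is exactly why Proposition~\ref{prop:H-I}, whose conclusion lands in $\N$, is the right input — and to carry the central factors $c^m$ around freely, for which one needs $c\in\cZ(\B_3)$ as recorded in the notational conventions.
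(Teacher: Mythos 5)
Your proposal is correct and follows essentially the same route as the paper: the values on $x_{12}$ and $x_{23}$ come directly from the homomorphism property, and the value on $c=\D^2$ is obtained by squaring $T_{m,f}(\D)=f^{-1}\D c^m\,\N$ from \eqref{LHS-hexa1} and then using the congruence $\D f\,\N=f^{-1}\D\,\N$ (equivalently \eqref{H-I-here} via \eqref{conj-by-D}) together with the centrality of $c$. The paper rewrites the leading subword $f^{-1}\D$ as $\D f$ while you rewrite the inner subword $\D f^{-1}$ as $f\D$; these are the same manipulation.
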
  
\begin{proof}
The first two equations in \eqref{T-m-f-PB-3} are straightforward  
consequences of the definitions of $x_{12} := \si_1^2$ and 
$x_{23} := \si_2^2$. 

To prove the third equation, we will use the calculation in 
\eqref{LHS-hexa1} and relation \eqref{H-I-here}. 

Indeed, due to the calculation in \eqref{LHS-hexa1}, 
$$
T_{m,f}(\D) = f^{-1} \D c^m \, \N
$$
Hence 
$$
T^{\PB_3}_{m,f}(c) = T_{m,f} (\D^2) =  f^{-1} \D c^m  f^{-1} \D c^m \, \N = 
 \D f c^m  f^{-1} \D c^m \, \N = \D^2 c^{2m} \, \N = c^{2m+1}\, \N.
$$

Proposition \eqref{prop:T-m-f-PB-3} is proved. 
\end{proof}

\bigskip
Note that, for every $[m,f] \in \GT_{pr}(\N)$, the restriction of 
$T^{\PB_3}_{m, f}$ to $\F_2 \le \PB_3$ gives us a homomorphism 
\begin{equation}
\label{T-m-f-F-2}
T^{\F_2}_{m,f} := T^{\PB_3}_{m,f} \big|_{\F_2} : \F_2 \to \F_2/\N_{\F_2}.
\end{equation}

Let us prove that 
\begin{prop}  
\label{prop:onto}
If a pair $(m,f)\in \ZZ \times \F_2$ satisfies hexagon relations 
\eqref{hexa1} and \eqref{hexa11} and $2m+1$ represents a unit in the ring
$\ZZ/ N_{\ord}\ZZ$, then the following conditions are equivalent:

\begin{enumerate}

\item[1)] The homomorphism $T_{m,f} : \B_3 \to \B_3/ \N$ is onto.

\item[2)] The homomorphism $T^{\PB_3}_{m,f} : \PB_3 \to \PB_3/ \N$ is onto.

\item[3)] The homomorphism $T^{\F_2}_{m,f} : \F_2 \to \F_2/ \N_{\F_2}$ is onto. 

\end{enumerate}

\end{prop}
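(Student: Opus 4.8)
The plan is to prove the chain of equivalences by establishing $(1)\Leftrightarrow(2)$ using the relation between $\B_3$ and $\PB_3$ via the symmetric group $S_3$, and $(2)\Leftrightarrow(3)$ using the decomposition $\PB_3 \cong \F_2 \times \lan c \ran$. The implications $(1)\Rightarrow(2)$ and $(2)\Rightarrow(3)$ are the easier halves; the substantive content is in the reverse directions, where the hypothesis that $2m+1$ is a unit in $\ZZ/N_{\ord}\ZZ$ is essential.

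First I would handle $(1)\Leftrightarrow(2)$. Recall from \eqref{rho-N-Tmf} that $\rho_{\N}\circ T_{m,f} = \rho$, and that we have a short exact sequence $1 \to \PB_3/\N \to \B_3/\N \xrightarrow{\rho_{\N}} S_3 \to 1$. Given $(2)$, the image of $T_{m,f}$ contains all of $\PB_3/\N$; since $T_{m,f}(\si_1)$ and $T_{m,f}(\si_2)$ map onto the generators $(1,2),(2,3)$ of $S_3$ under $\rho_{\N}$, the image surjects onto $S_3$ as well, and a standard diagram chase (the image is a subgroup mapping onto the quotient and containing the whole kernel) gives $(1)$. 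Conversely, $(1)\Rightarrow(2)$ because $T^{\PB_3}_{m,f}$ is exactly the restriction of the surjection $T_{m,f}$ to $\PB_3 = \rho^{-1}(1)$, and $\rho_{\N}\circ T_{m,f}=\rho$ forces $T_{m,f}(\PB_3) = T_{m,f}(\B_3)\cap \PB_3/\N = \PB_3/\N$.

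Next, $(2)\Leftrightarrow(3)$. Using $\PB_3 = \F_2 \times \lan c\ran$ and Proposition \ref{prop:T-m-f-PB-3}, the homomorphism $T^{\PB_3}_{m,f}$ sends $\F_2$ into $\F_2/\N_{\F_2}$ (viewed inside $\PB_3/\N$) and sends $c$ to $c^{2m+1}\N$. The implication $(2)\Rightarrow(3)$: compose the surjection $T^{\PB_3}_{m,f}$ with the projection $\PB_3/\N \to (\PB_3/\N)/(\lan c\ran\N/\N)$, which can be identified with $\F_2/\N_{\F_2}$ up to the interaction of $c$ with $\N$; more carefully, one argues directly that the image of $\F_2$ together with the image of $\lan c\ran$ generates $\PB_3/\N$, and since $c$ is central with $T^{\PB_3}_{m,f}(c)=c^{2m+1}\N$ a power of the generator $c\N$, surjectivity of $T^{\PB_3}_{m,f}$ forces $T^{\F_2}_{m,f}$ to hit everything outside the (central, $c$-generated) part — and then the unit hypothesis on $2m+1$ modulo $N_{\ord}$, hence modulo $\ord(c\N)$, shows $c^{2m+1}\N$ generates $\lan c\N\ran$, so $c\N$ itself is in the image of $T^{\PB_3}_{m,f}$; the remaining generators must then come from $\F_2$, giving $(3)$. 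For $(3)\Rightarrow(2)$: if $T^{\F_2}_{m,f}$ is onto, then the image of $T^{\PB_3}_{m,f}$ contains all of $\F_2/\N_{\F_2}$; it also contains $c^{2m+1}\N$, and since $2m+1$ is a unit mod $\ord(c\N)$ (a divisor of $N_{\ord}$), the element $c^{2m+1}\N$ generates $\lan c\ran$ modulo $\N\cap\lan c\ran$, so the image contains the image of $\lan c\ran$ in $\PB_3/\N$ as well. Because $\PB_3 = \F_2\cdot\lan c\ran$, these two together generate $\PB_3/\N$, giving $(2)$.

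The main obstacle I anticipate is the bookkeeping around the center: $\F_2 \cap \lan c\ran = 1$ in $\PB_3$, but $\N_{\F_2} = \N\cap\F_2$ and $\N\cap\lan c\ran$ need not be ``compatible'' with the direct product decomposition after passing to the quotient $\PB_3/\N$ — i.e., the image of $\F_2$ and the image of $\lan c\ran$ in $\PB_3/\N$ may intersect nontrivially, so one cannot simply say $\PB_3/\N \cong (\F_2/\N_{\F_2})\times(\lan c\ran/(\N\cap\lan c\ran))$. The way around this is to avoid decomposing the quotient and instead argue at the level of \emph{generating sets}: $\PB_3/\N$ is generated by the images of $x_{12}, x_{23}, c$, two of which lie in the image of $\F_2$ under $T^{\PB_3}_{m,f}$ up to conjugation and powers, and the third of which is controlled by the unit condition on $2m+1$. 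This reduces every implication to the statement ``a subgroup of $\PB_3/\N$ containing the image of $\F_2/\N_{\F_2}$ and the element $c^{2m+1}\N$ is all of $\PB_3/\N$,'' which follows once the unit hypothesis upgrades $c^{2m+1}\N$ to $c\N$.
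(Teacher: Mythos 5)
Your overall plan differs mildly from the paper's (you argue the two equivalences $1\Leftrightarrow 2$ and $2\Leftrightarrow 3$ separately, while the paper runs the cycle $1\Rightarrow 2\Rightarrow 3\Rightarrow 1$), and your $1\Rightarrow 2$, $2\Rightarrow 1$, and $3\Rightarrow 2$ arguments are all sound. But the implication $2\Rightarrow 3$ — which is the substantive step in both schemes — has a genuine gap that your proposed workaround does not close.

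The problem is this. Writing $A := T^{\F_2}_{m,f}(\F_2) \le \F_2/\N_{\F_2} =: B$ and $C := \lan c\ran\N/\N$, condition $2)$ together with the unit hypothesis gives $A\cdot C = B\cdot C = \PB_3/\N$, and you want to conclude $A=B$. This is false in general for $A\le B$ and a central cyclic $C$ (e.g.\ $B=\ZZ/2\times\ZZ/2$, $C=\{(0,0),(1,1)\}$, $A=\{(0,0),(1,0)\}$: then $AC=BC=B$ but $A\ne B$). You correctly flag the ``interaction of $c$ with $\N$'' as the obstacle, but the resolution you propose is circular: reducing everything to ``a subgroup containing the image of $\F_2/\N_{\F_2}$ and $c^{2m+1}\N$ is all of $\PB_3/\N$'' presupposes that $T^{\F_2}_{m,f}(\F_2)$ already equals $\F_2/\N_{\F_2}$, which is exactly condition $3)$. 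Likewise, saying $x_{23}\N$ lies in the image ``up to conjugation'' is not enough: $T^{\F_2}_{m,f}(x_{23}) = f^{-1}x_{23}^{2m+1}f\,\N_{\F_2}$, so undoing the conjugation requires knowing $f\N_{\F_2}$ is in the image of $T^{\F_2}_{m,f}$ — again the very thing you are trying to establish.

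The paper closes this gap with an additional idea that your sketch lacks: from surjectivity of $T^{\PB_3}_{m,f}$ (not of $T^{\F_2}_{m,f}$), pick $w\in\PB_3$ with $T^{\PB_3}_{m,f}(w)=f\N$, decompose $w=\ti{w}c^j$ with $\ti{w}\in\F_2$ using $\PB_3\cong\F_2\times\lan c\ran$, and then exploit centrality of $c$ to compute $T^{\PB_3}_{m,f}(\ti{w}\,x_{23}^k\,\ti{w}^{-1}) = x_{23}\N$ for a suitable $k$ coming from the unit hypothesis. The point is that the conjugating element $\ti{w}$ lands in $\F_2$, so the conjugate $\ti{w}\,x_{23}^k\,\ti{w}^{-1}$ is a legitimate input to $T^{\F_2}_{m,f}$; the stray $c^j$ factor commutes away and contributes only a central power that cancels. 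Without this pull-back-and-split step, your $2\Rightarrow 3$ does not go through.
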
  
\begin{proof}
We will start with the implication $1) \hence 2)$.  

Let $w \in \PB_3$. Since $T_{m,f}$ is onto, there exists 
$v \in \B_3$ such that $T_{m,f}(v) = w \N$. Due to \eqref{rho-N-Tmf}, 
$v \in \ker(\rho) = \PB_3$. Thus $T_{m,f}^{\PB_3}$ is indeed surjective.

Now we will take care of the implication $2) \hence 3)$.  
We will do so by showing that $x_{12} \N_{\F_2}$ and $x_{23} \N_{\F_2}$ 
belong to the image of $T_{m,f}^{\F_2}$. First, we have 
\begin{equation}
T_{m,f}^{\F_2}( x_{12} ) = x_{12}^{2m+1} \N_{\F_2}.
\end{equation}
Since $2m+1$ is coprime with the order of $x_{12} \N_{\F_2}$, 
$x_{12}^{2m+1} \N_{\F_2} \in T_{m,f}^{\F_2}(\F_2)$ implies that 
\begin{equation}
\label{x12NF2-in-the-image}
 x_{12} \N_{\F_2}  \in  T_{m,f}^{\F_2}(\F_2).
\end{equation}

Similarly, since $2m+1$ is coprime with 
$\ord\big(x_{23} \N_{\F_2} \big) =  \ord \big(f^{-1} x_{23} f\N_{\F_2}\big)$ 
and 
$$
T_{m,f}^{\F_2}(x_{23}) = f^{-1} x_{23}^{2m+1} f\, \N_{\F_2} = \big(\, f^{-1} x_{23} f \, \N_{\F_2} \,\big)^{2m+1},
$$
we conclude that 
\begin{equation}
\label{conj-x23NF2-in-the-image}
f^{-1} x_{23} f \N_{\F_2}  =   T_{m,f}^{\F_2}(x^k_{23})
\end{equation}
for some integer $k$. 

Since $T_{m,f}^{\PB_3}$ is onto, there exists $w \in \PB_3$ such that
$T_{m,f}^{\PB_3}(w) = f \N$.
Moreover $\PB_3 = \F_2 \times \lan c \ran$, so $w = \ti{w} c^j$ for some $\ti{w} \in \F_2$ and 
some integer $j$. Thus we get
\begin{equation}
\label{Tmf-PB3-tilde-w}
T_{m,f}^{\PB_3}(\ti{w}) = c^{-j(2m+1)} f \N.
\end{equation}

Since $c \in \cZ(\PB_3)$, equations \eqref{conj-x23NF2-in-the-image}
and \eqref{Tmf-PB3-tilde-w} imply that 
$$
T_{m,f}^{\PB_3}(\ti{w} x_{23}^k \ti{w}^{-1}) = c^{-j(2m+1)} f (f^{-1} x_{23} f) f^{-1} c^{j(2m+1)} \N = x_{23} \N.
$$
Note that $T_{m,f}^{\F_2} : \F_2 \to \F_2/\N_{\F_2}$ is the restriction of 
$T_{m,f}^{\PB_3}$ to $\F_2 \le \PB_3$. Therefore
\begin{equation}
\label{x23NF2-in-the-image}
x_{23} \N_{\F_2} ~\in~ T_{m,f}^{\F_2}( \F_2 ).  
\end{equation}

Combining \eqref{x12NF2-in-the-image} and \eqref{x23NF2-in-the-image}, 
we see that $\displaystyle \F_2 \overset{ T_{m,f}^{\F_2} }{\tto} \F_2/\N_{\F_2}$
is indeed surjective, i.e. the implication $2) \hence 3)$ is proved.  

Let us now prove the implication $3) \hence 1)$. 


Using $\gcd(2m+1, \ord(x_{12} N)) = \gcd(2m+1, \ord(x_{23} N)) = 1 $ and 
$2 \nmid (2 m + 1)$, it is easy to show that 
\begin{equation}
\label{gcd-2m1-siN-siN}
\gcd(2m+1, \ord(\si_1 N)) = \gcd(2m+1, \ord(\si_2 N)) = 1.
\end{equation}

Combining \eqref{gcd-2m1-siN-siN} with 
$$
T_{m,f}(\si_1) = \si_1^{2m+1} \N, 
\qquad
T_{m,f}(\si_2) = f^{-1} \si_2^{2m+1} f \N =  (f^{-1} \si_2 f \N)^{2m+1}\,,
$$
we conclude that
\begin{equation}
\label{si1N-in-the-image}
\si_1\, \N  ~\in~ T_{m,f} (\B_3)
\end{equation}
and 
\begin{equation}
\label{f-inv-si2N-f-in-the-image}
f^{-1} \si_2 f \,\N ~~\in~~ T_{m,f} (\B_3). 
\end{equation}

Surjectivity of $T_{m,f}^{\F_2}$ implies that $f \N_{\F_2} = T_{m,f}^{\F_2} (w)$
for some $w \in \F_2$. Hence
\begin{equation}
\label{Tmf-w-fN}
T_{m,f} (w) = f \N.
\end{equation}

Using \eqref{f-inv-si2N-f-in-the-image} and \eqref{Tmf-w-fN}, it is easy to see that 
\begin{equation}
\label{si2N-in-the-image}
\si_2\, \N  ~\in~ T_{m,f} (\B_3).
\end{equation}

Combining \eqref{si1N-in-the-image} and \eqref{si2N-in-the-image}, we conclude 
that $\displaystyle \B_3 \overset{ T_{m,f} }{\tto} \B_3/\N$ is indeed surjective, i.e. 
the implication $3) \hence 1)$ is also proved. 

Proposition \ref{prop:onto} is proved.  
\end{proof}

\bigskip
\begin{defi}  
\label{dfn:GT-shadows}
Let $\N \in \NFI_{\PB_3}(\B_3)$. A charming $\GT$-pair 
$[m,f] \in \GT_{pr}(\N)$ is called a $\GT$-\e{shadow with the target} $\N$ if 
the pair $(m,f)$ satisfies one of the three equivalent conditions of 
Proposition \ref{prop:onto}. We denote by $\GT(\N)$ the set of $\GT$-shadows 
with the target $\N$.
\end{defi}  

\bigskip

Using \eqref{rho-N-Tmf}, it is easy to show that, for every $[m,f] \in \GT(\N)$, 
the kernel $\K$ of the homomorphism 
$T_{m,f}: \B_3 \to \B_3 / \N$ belongs to $\NFI_{\PB_3}(\B_3)$, and
\begin{equation}
\label{ker-Tmf-ker-Tmf-PB3}
\K = \ker\big(\PB_3 \overset{T^{\PB_3}_{m,f}}{\tto} \PB_3/\N \big).
\end{equation}
Moreover, the surjectivity of $T_{m,f}$ implies that it factors as follows 
\begin{equation}
\label{Tmf-isom}
T_{m,f} = T^{\isom}_{m,f} \circ \cP_{\K}, 
\end{equation}
where $\cP_{\K}$ is the standard onto homomorphism
$\B_3 \to \B_3/\K$ and $T^{\isom}_{m,f}$ is the isomorphism  $\B_3/\K \iso \B_3/\N$
defined by the formula $T^{\isom}_{m,f}(w \K) := T_{m,f}(w)$.

Using \eqref{ker-Tmf-ker-Tmf-PB3}, it is easy to prove that, 
for every $[m,f] \in \GT(\N)$, 
\begin{equation}
\label{ker-Tmf-F2-K-F2}
\ker\big(\F_2 \overset{T^{\F_2}_{m,f}}{\tto} \F_2/\N_{\F_2} \big) = \K_{\F_2}\,,
\end{equation}
where $\K : = \ker(T_{m,f})$. 

Using \eqref{ker-Tmf-ker-Tmf-PB3} and \eqref{ker-Tmf-F2-K-F2}, we get 
the similar factorizations for the homomorphisms $T^{\PB_3}_{m,f}: \PB_3 \to \PB_3 / \N$
and for $T^{\F_2}_{m,f}: \F_2 \to \F_2 / \N_{\F_2}$, i.e. 
\begin{equation}
\label{Tmf-PB3-isom}
T^{\PB_3}_{m,f} = T^{\PB_3, \isom}_{m,f} \circ \cP_{\K}, 
\end{equation}
and 
\begin{equation}
\label{Tmf-F2-isom}
T^{\F_2}_{m,f} = T^{\F_2, \isom}_{m,f} \circ \cP_{\K_{\F_2}}, 
\end{equation}
where $T^{\PB_3, \isom}_{m,f}$ (resp. $T^{\F_2, \isom}_{m,f}$) is an isomorphism 
$\PB_3/\K \iso \PB_3/\N$ (resp. $\F_2/\K_{\F_2} \iso \F_2/\N_{\F_2}$). For example, 
the isomorphism $T_{m,f}^{\F_2, \isom} : \F_2/\K_{\F_2} \iso \F_2/\N_{\F_2}$ is defined 
by the formula:
\begin{equation}
\label{Tmf-F2-isom-dfn}
T_{m,f}^{\F_2, \isom}(w \K_{\F_2}) := T^{\F_2}_{m, f}(w).
\end{equation}

Thus we proved the first three statements of the following proposition:  
\begin{prop}  
\label{prop:K-ord-N-ord}
Let $\K, \N \in \NFI_{\PB_3}(\B_3)$. If there exists $[m,f] \in \GT(\N)$ such 
that $\K = \ker(T_{m,f})$, then 
\begin{itemize}

\item[1)] the finite groups $\B_3/\K$ and  $\B_3/\N$ are isomorphic,

\item[2)] the finite groups $\PB_3/\K$ and  $\PB_3/\N$ are isomorphic,

\item[3)] the finite groups $\F_2/\K_{\F_2}$ and  $\F_2/\N_{\F_2}$ are isomorphic and, finally, 

\item[4)] $K_{\ord} = N_{\ord}$.  

\end{itemize}
\end{prop}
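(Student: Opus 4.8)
Items 1)--3) have in fact already been established: the factorizations \eqref{Tmf-isom}, \eqref{Tmf-PB3-isom} and \eqref{Tmf-F2-isom} exhibit explicit isomorphisms $\B_3/\K \iso \B_3/\N$, $\PB_3/\K \iso \PB_3/\N$ and $\F_2/\K_{\F_2} \iso \F_2/\N_{\F_2}$. So the only thing that remains is the equality $K_{\ord} = N_{\ord}$ of item 4), and my plan is to obtain it by chasing the orders of $x_{12}$, $x_{23}$ and $c$ through the isomorphism $T^{\PB_3, \isom}_{m,f} : \PB_3/\K \iso \PB_3/\N$.

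First I would note that, since $[m,f] \in \GT(\N)$ is in particular a charming $\GT$-pair, the integer $2m+1$ represents a unit in $\ZZ/N_{\ord}\ZZ$, i.e.\ $\gcd(2m+1, N_{\ord}) = 1$; because $\ord(x_{12}\N)$, $\ord(x_{23}\N)$ and $\ord(c\N)$ all divide $N_{\ord}$, it follows that $2m+1$ is coprime to each of these three orders. Next, by Proposition \ref{prop:T-m-f-PB-3} the isomorphism $T^{\PB_3, \isom}_{m,f}$ carries $x_{12}\K$, $x_{23}\K$, $c\K$ to $x_{12}^{2m+1}\N$, $f^{-1} x_{23}^{2m+1} f\, \N$, $c^{2m+1}\N$, respectively. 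Since an isomorphism of finite groups preserves the order of every element (and conjugation by $f$ preserves order), this gives $\ord(x_{12}\K) = \ord(x_{12}^{2m+1}\N)$, $\ord(x_{23}\K) = \ord(x_{23}^{2m+1}\N)$ and $\ord(c\K) = \ord(c^{2m+1}\N)$.

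The remaining ingredient is the elementary identity $\ord(g^k) = \ord(g)/\gcd(\ord(g), k)$, valid for any element $g$ of finite order. Applying it with $k = 2m+1$ and invoking the coprimality observed above, I get $\ord(x_{12}^{2m+1}\N) = \ord(x_{12}\N)$, $\ord(x_{23}^{2m+1}\N) = \ord(x_{23}\N)$ and $\ord(c^{2m+1}\N) = \ord(c\N)$. Combining with the previous paragraph yields $\ord(x_{12}\K) = \ord(x_{12}\N)$, $\ord(x_{23}\K) = \ord(x_{23}\N)$ and $\ord(c\K) = \ord(c\N)$, so taking least common multiples gives $K_{\ord} = \lcm(\ord(x_{12}\K), \ord(x_{23}\K), \ord(c\K)) = \lcm(\ord(x_{12}\N), \ord(x_{23}\N), \ord(c\N)) = N_{\ord}$.

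I do not expect any real obstacle here; the argument is a short bookkeeping of element orders. The one place that needs a moment's care is the logical order of the coprimality step: it is precisely the fact that each individual order divides $N_{\ord}$ that upgrades ``$2m+1$ is a unit modulo $N_{\ord}$'' into ``$2m+1$ is coprime to $\ord(x_{12}\N)$'', etc. One genuinely uses the \emph{explicit} form of $T^{\PB_3, \isom}_{m,f}$ supplied by Proposition \ref{prop:T-m-f-PB-3}, not merely the abstract isomorphism class of $\PB_3/\K$, in order to identify the images of the three distinguished elements.
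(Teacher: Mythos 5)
Your proposal is correct and follows essentially the same route as the paper: items 1)--3) are read off from the factorizations \eqref{Tmf-isom}, \eqref{Tmf-PB3-isom}, \eqref{Tmf-F2-isom}, and item 4) is obtained by tracking the orders of $x_{12}$, $x_{23}$, $c$ through the explicit isomorphism $T^{\PB_3,\isom}_{m,f}$ together with the coprimality of $2m+1$ with those orders. The only (minor) difference is that you spell out the divisibility argument upgrading ``unit mod $N_{\ord}$'' to coprimality with each individual order, which the paper takes for granted.
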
  
\begin{proof} 
It remains to prove that $K_{\ord} = N_{\ord}$. 

Since $2m+1$ is coprime with the orders of $x_{12}\N$, $x_{23}\N$, and $c \N$, 
we have
\begin{equation}
\label{orders-x-x-c}
\ord(x_{12}^{2m+1} \N) = \ord(x_{12}\N), 
\quad
\ord(x_{23}^{2m+1} \N) = \ord( x_{23} \N), 
\quad 
\ord(c^{2m+1} \N) = \ord(c \N).
\end{equation}
Note that $\ord(x_{23}^{2m+1} \N) = \ord(f^{-1} x_{23}^{2m+1} f \N)$. Combining this observation 
with the second equation in \eqref{orders-x-x-c}, we conclude that 
\begin{equation}
\label{orders-x23-conj-x23}
\ord(f^{-1} x_{23}^{2m+1} f \N) = \ord( x_{23} \N).
\end{equation}

Since
$$
T_{m,f}^{\PB_3, \isom}(x_{12} \K) = x_{12}^{2m+1} \N, 
\qquad 
T_{m,f}^{\PB_3, \isom}(c \K) = c^{2m+1} \N,
$$
$$
T_{m,f}^{\PB_3, \isom}(x_{23} \K) = f^{-1} x_{23}^{2m+1} f \N, 
$$
and $T_{m,f}^{\PB_3, \isom}$ is an isomorphism,  
equations \eqref{orders-x-x-c} and \eqref{orders-x23-conj-x23} imply that 
$$
\ord(x_{12} \K) = \ord(x_{12}\N), 
\quad
\ord(x_{23} \K) = \ord(x_{23}\N), 
\quad
\ord(c \K) = \ord(c \N).
$$
Thus, $K_{\ord} = N_{\ord}$. 
\end{proof}

\bigskip

Our next goal is to show that $\GT$-shadows form a groupoid $\GTSh$ 
with $\Ob(\GTSh):= \NFI_{\PB_3}(\B_3)$ and 
\begin{equation}
\label{GTSh-K-N}
\GTSh(\K, \N) := \big\{ [m,f] \in \GT(\N) ~|~ \ker(T_{m,f}) = \K \big\}, 
\qquad \K, \N \in  \NFI_{\PB_3}(\B_3).
\end{equation}

To define the composition of morphisms, we need an auxiliary construction. 

For every pair $(m,f) \in \ZZ \times \F_2$, the formulas 
\begin{equation}
\label{Emf}
E_{m,f}(x) := x^{2m+1}, \qquad 
E_{m,f}(y) := f^{-1} y^{2m+1} f
\end{equation}
define an endomorphism $E_{m,f}$ of $\F_2$. 

A direct computation shows that 
\begin{equation}
\label{E-composition}
E_{m_1,f_1} \circ  E_{m_2, f_2} = E_{m,f}\,, 
\end{equation}
where 
$$
m := 2 m_1 m_2 + m_1 + m_2, \qquad  f := f_1 E_{m_1,f_1}(f_2). 
$$

It is not hard to see\footnote{A detailed proof is given in \cite[Proposition 2.11]{JXthesis}.}
that the set $\ZZ \times \F_2$ is a monoid with respect to the binary operation
\begin{equation}
\label{bullet-Z-F2}
(m_1, f_1) \bullet (m_2, f_2) := \big(2m_1 m_2 + m_1 + m_2 , f_1 E_{m_1,f_1}(f_2) \big)
\end{equation}
and the identity element $(0, 1_{\F_2})$.
Moreover, the assignment $(m,f) \mapsto E_{m,f}$ defines a 
homomorphism of monoids $(\ZZ \times \F_2, \bullet) \to \End(\F_2)$.

Note that, if $(m,f) \in \ZZ \times \F_2$ represents a $\GT$-pair
with the target $\N \in \NFI_{\PB_3}(\B_3)$, then 
\begin{equation}
\label{E-T-F2}
T^{\F_2}_{m,f}(w) =  E_{m,f}(w) \N_{\F_2}\,, \qquad \forall~ w \in \F_2\,,
\end{equation}
where $T^{\F_2}_{m,f}$ is defined in \eqref{T-m-f-F-2}.

Let us prove the following auxiliary statement: 
\begin{prop}  
\label{prop:composition}
Let  $\N^{(1)}, \N^{(2)}, \N^{(3)} \in \NFI_{\PB_3}(\B_3)$, 
$[m_1, f_1] \in \GTSh(\N^{(2)}, \N^{(1)})$,  $[m_2, f_2] \in \GT(\N^{(3)}, \N^{(2)})$
and $N_{\ord}:= N^{(1)}_{\ord} = N^{(2)}_{\ord} = N^{(3)}_{\ord}$. If
\begin{equation}
\label{composition}
m := 2 m_1 m_2 + m_1 + m_2, 
\qquad
f := f_1 E_{m_1,f_1}(f_2),
\end{equation}
then 
\begin{equation}
\label{result-in-GTSh-3-1}
(m + N_{\ord}\ZZ, f \N^{(1)}_{\F_2}) ~\in~ \GTSh(\N^{(3)}, \N^{(1)}).
\end{equation}
The pair $[m,f] := (m + N_{\ord}\ZZ, f \N^{(1)}_{\F_2})$ depends only 
on the cosets $f_1\N^{(1)}$, $f_2 \N^{(2)}$ and residue classes 
$m_1 + N_{\ord}\ZZ$, $m_2 + N_{\ord}\ZZ$.
Moreover, the diagram 
\begin{equation}
\label{composition-diag}
\begin{tikzpicture}
\matrix (m) [matrix of math nodes, row sep=3em, column sep=4em]
{\B_3    &  \B_3 & ~ \\
 \B_3/\N^{(3)}  &  \B_3/\N^{(2)} &  \B_3/\N^{(1)} \\};
\path[->, font=\scriptsize]
(m-1-1) edge node[above] {$~~~~T_{m_2, f_2}$} (m-2-2)  
edge node[left] {$\cP_{\N^{(3)}}$} (m-2-1) 
(m-2-1) edge node[above] {$T^{\isom}_{m_2, f_2}$} (m-2-2)  
(m-1-1) edge node[above] {$~~~~T_{m_2, f_2}$} (m-2-2)  
(m-1-2) edge node[above] {$~~~~T_{m_1, f_1}$} (m-2-3)  
edge node[right] {$\cP_{\N^{(2)}}$} (m-2-2) 
(m-2-2) edge node[above] {$T^{\isom}_{m_1, f_1}$} (m-2-3)  
(m-1-1) edge[bend left = 60]  node[above] {$T_{m, f}$} (m-2-3) 
(m-2-1) edge[bend right = 30]  node[below] {$T^{\isom}_{m, f}$} (m-2-3) ;
\end{tikzpicture}
\end{equation}
commutes. In particular, 
\begin{equation}
\label{pair-to-T-functor}
T^{\isom}_{m_1, f_1} \circ T^{\isom}_{m_2, f_2} =  T^{\isom}_{m, f}\,.
\end{equation}
\end{prop}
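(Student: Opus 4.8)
The plan is to deduce the entire proposition from the single identity of homomorphisms
\[
T_{m,f} \;=\; T^{\isom}_{m_1,f_1}\circ T_{m_2,f_2}\colon \B_3\to\B_3/\N^{(1)}.
\]
First I would record the two elementary consequences of \eqref{composition}: the ``cyclotomic'' identity $2m+1=(2m_1+1)(2m_2+1)$ (cf.\ \eqref{cyclotomic-hat}), and the fact that $f=f_1E_{m_1,f_1}(f_2)$ lies in $[\F_2,\F_2]$ — the latter because, $[m_1,f_1]$ and $[m_2,f_2]$ being charming, one may take representatives $f_1,f_2\in[\F_2,\F_2]$, and $E_{m_1,f_1}$ is an endomorphism of $\F_2$. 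By Proposition \ref{prop:K-ord-N-ord} the standing hypothesis $N^{(1)}_{\ord}=N^{(2)}_{\ord}=N^{(3)}_{\ord}$ is in fact automatic; in particular $2m+1$ is a unit of $\ZZ/N_{\ord}\ZZ$, being a product of two such units, and $f\N^{(1)}_{\F_2}\in[\F_2/\N^{(1)}_{\F_2},\F_2/\N^{(1)}_{\F_2}]$. So two of the three defining conditions of a $\GT$-shadow with target $\N^{(1)}$ are already in place.

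The main step is to verify that $(m,f)$ satisfies the hexagon relations \eqref{hexa1}, \eqref{hexa11} modulo $\N^{(1)}$. I would obtain these by applying the isomorphism $T^{\isom}_{m_1,f_1}\colon \B_3/\N^{(2)}\iso\B_3/\N^{(1)}$ to the hexagon relations for $(m_2,f_2)$ modulo $\N^{(2)}$ — legitimate precisely because $T^{\isom}_{m_1,f_1}$ is a bijection — and then simplifying, exactly as in the proof of Proposition \ref{prop:GT-gen-mon-submonoid}, but with $\wh{\B}_3$ replaced by the finite quotient $\B_3/\N^{(1)}$ and the endomorphism $T_{\hat m_1,\hat f_1}$ replaced by the homomorphism $T_{m_1,f_1}$. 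Concretely, since $T^{\isom}_{m_1,f_1}(w\N^{(2)})=T_{m_1,f_1}(w)$, applying $T^{\isom}_{m_1,f_1}$ to \eqref{hexa1} for $(m_2,f_2)$ turns its left-hand side — after substituting $T_{m_1,f_1}(\si_1)=\si_1^{2m_1+1}\N^{(1)}$, $T_{m_1,f_1}(\si_2)=f_1^{-1}\si_2^{2m_1+1}f_1\N^{(1)}$, $T_{m_1,f_1}(f_2)=E_{m_1,f_1}(f_2)\N^{(1)}$ (see \eqref{E-T-F2}) and $2m+1=(2m_1+1)(2m_2+1)$ — into $\si_1^{2m+1}f^{-1}\si_2^{2m+1}f\,\N^{(1)}$, and turns its right-hand side — after in addition using $T_{m_1,f_1}(c)=c^{2m_1+1}\N^{(1)}$, $T_{m_1,f_1}(x_{12})=x_{12}^{2m_1+1}\N^{(1)}$, $T_{m_1,f_1}(\D)=f_1^{-1}\D c^{m_1}\N^{(1)}$ (the computation in \eqref{LHS-hexa1}), centrality of $c$, and the braid identities $\si_1\si_2=\D\si_1^{-1}$, $\si_1\D=\D\si_2$ — into $f^{-1}\si_1\si_2 x_{12}^{-m}c^{m}\,\N^{(1)}$. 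These are exactly the two sides of \eqref{hexa1} for $(m,f)$; relation \eqref{hexa11} is obtained identically from the second hexagon relation for $(m_2,f_2)$. I expect this bookkeeping of substitutions to be the only genuinely tedious part, but it is essentially a line-by-line transcription of a computation already carried out in the profinite setting.

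Granting the hexagon relations, Proposition \ref{prop:T-m-f} produces the homomorphism $T_{m,f}\colon\B_3\to\B_3/\N^{(1)}$, and evaluating both sides on the generators $\si_1,\si_2$ — using once more $2m+1=(2m_1+1)(2m_2+1)$ and $f=f_1E_{m_1,f_1}(f_2)$ — yields the displayed identity $T_{m,f}=T^{\isom}_{m_1,f_1}\circ T_{m_2,f_2}$. Since $T^{\isom}_{m_1,f_1}$ is an isomorphism and $T_{m_2,f_2}$ is surjective with kernel $\N^{(3)}$, the composite $T_{m,f}$ is surjective with $\ker(T_{m,f})=\N^{(3)}$. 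Combined with the two conditions already checked, this gives $[m,f]\in\GT(\N^{(1)})$ with $\ker(T_{m,f})=\N^{(3)}$, i.e.\ \eqref{result-in-GTSh-3-1}. The commutativity of \eqref{composition-diag} then unwinds mechanically: the two lower triangles are the factorizations \eqref{Tmf-isom} for $[m_2,f_2]$ and for $[m_1,f_1]$, the outer curved triangle is the identity $T_{m,f}=T^{\isom}_{m_1,f_1}\circ T_{m_2,f_2}$ just established (while the arc $T^{\isom}_{m,f}\circ\cP_{\N^{(3)}}=T_{m,f}$ is \eqref{Tmf-isom} for $[m,f]$), and \eqref{pair-to-T-functor} follows by precomposing the identity $T^{\isom}_{m_1,f_1}\circ T^{\isom}_{m_2,f_2}\circ\cP_{\N^{(3)}}=T^{\isom}_{m_1,f_1}\circ T_{m_2,f_2}=T_{m,f}=T^{\isom}_{m,f}\circ\cP_{\N^{(3)}}$ with the surjection $\cP_{\N^{(3)}}$.

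Finally, for the well-definedness claim, suppose each $f_i$ is replaced by $f_ih_i$ with $h_i\in\N^{(i)}_{\F_2}$ and each $m_i$ by $m_i+t_iN_{\ord}$. Expanding $m=2m_1m_2+m_1+m_2$ shows that $m$ changes by a multiple of $N_{\ord}$. For the $\F_2$-component, observe that $T^{\F_2}_{m_1,f_1}$ depends only on the $\GT$-pair $[m_1,f_1]$ — indeed, by \eqref{E-T-F2} it suffices that $E_{m_1,f_1}(w)\N^{(1)}_{\F_2}$ be unchanged under such a modification of $(m_1,f_1)$, which holds because the order of each of $x_{12}\N^{(1)}$, $x_{23}\N^{(1)}$ divides $N_{\ord}$ and $\N^{(1)}_{\F_2}\unlhd\F_2$ — and that $T^{\F_2}_{m_1,f_1}$ has kernel $\N^{(2)}_{\F_2}$, hence is insensitive to replacing $f_2$ by $f_2h_2$. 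Since $f\N^{(1)}_{\F_2}=(f_1\N^{(1)}_{\F_2})\cdot T^{\F_2}_{m_1,f_1}(f_2)$ by \eqref{E-T-F2}, the coset $f\N^{(1)}_{\F_2}$ is unchanged as well. Thus $[m,f]$ depends only on $m_1,m_2$ modulo $N_{\ord}$ and on the cosets $f_1\N^{(1)}$, $f_2\N^{(2)}$, completing the proof.
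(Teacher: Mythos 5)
Your proposal is correct and follows essentially the same route as the paper's proof: the hexagon relations for $(m,f)$ are obtained by applying $T^{\isom}_{m_1,f_1}$ to those of $(m_2,f_2)$, the identity $T_{m,f}=T^{\isom}_{m_1,f_1}\circ T_{m_2,f_2}$ is checked on generators and yields surjectivity and the kernel, and the well-definedness argument via $\ker(T^{\F_2}_{m_1,f_1})=\N^{(2)}_{\F_2}$ matches the paper's. The only cosmetic difference is that you route the right-hand-side computation through $T_{m_1,f_1}(\D)=f_1^{-1}\D c^{m_1}\N^{(1)}$ rather than invoking the hexagon relation for $(m_1,f_1)$ directly, which is an equivalent bookkeeping choice.
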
  
\begin{proof} 
The first equation in \eqref{composition} implies that
\begin{equation}
\label{2m+1}
2 m +1 = (2 m_1+1) (2 m_2+1). 
\end{equation}

Our first goal is to show that the pair $(m,f)$ satisfies hexagon relations 
\eqref{hexa1}, \eqref{hexa11} (modulo $\N^{(1)}$).

The first hexagon relation for $(m_2, f_2)$ (modulo $\N^{(2)}$) reads
\begin{equation}
\label{hexa1-mod-N2}
\si_1^{2m_2+1}  \, f_2^{-1} \si_2^{2m_2+1} f_2 \, \N^{(2)} ~ = ~ 
f_2^{-1} \si_1 \si_2 x_{12}^{-m_2} c^{m_2} \, \N^{(2)}\,.
\end{equation}

Applying $T^{\isom}_{m_1, f_1}$ to the left hand side of 
\eqref{hexa1-mod-N2}
and using \eqref{E-T-F2}, \eqref{2m+1}, we get
$$
\si_1^{(2 m_1+1)(2m_2+1)}\,  E_{m_1, f_1}(f_2)^{-1} f_1^{-1} 
\si_2^{(2 m_1+1)(2m_2+1)}   f_1  E_{m_1, f_1} (f_2) \, \N^{(1)}  = 
$$
\begin{equation}
\label{LHS-is}
\si_1^{(2 m_1+1)(2m_2+1)} \,  f^{-1} 
\si_2^{(2 m_1+1)(2m_2+1)} f \, \N^{(1)} =  
\si_1^{2 m+1}   f^{-1} \si_2^{2m+1} f \, \N^{(1)}. 
\end{equation}

Applying $T^{\isom}_{m_1, f_1}$ to the right hand side of  \eqref{hexa1-mod-N2}, 
using \eqref{T-m-f-PB-3}, 
\eqref{E-T-F2}, and hexagon relation \eqref{hexa1} for $(m_1, f_1)$, we get 
$$
E_{m_1, f_1}(f_2)^{-1} \,
( \si_1^{2m_1+1}  f_1^{-1} \si_2^{2 m_1+1} f_1) \,
x_{12}^{-m_2 (2 m_1+1)} c^{m_2 (2m_1+1)} \, \N^{(1)} = 
$$
$$
E_{m_1, f_1}(f_2)^{-1}  f_1^{-1} \si_1 \si_2
x_{12}^{-m_1} c^{m_1} x_{12}^{-m_2 (2 m_1+1)} c^{m_2 (2m_1+1)} \, \N^{(1)}  ~=~
f^{-1} \si_1 \si_2 x_{12}^{-m} c^{m} \N^{(1)}\,.
$$

Combining this result with the final expression in \eqref{LHS-is}, we see that the pair $(m,f)$
satisfies \eqref{hexa1} modulo $\N^{(1)}$.  

Applying $T^{\isom}_{m_1, f_1}$ to both sides of the second hexagon relation for 
$(m_2, f_2)$ and performing similar calculations, we see that the pair $(m,f)$ satisfies 
\eqref{hexa11} modulo $\N^{(1)}$.  

Since $2m +1 = (2 m_1+1) (2 m_2+1)$ and $2 \ol{m}_1+1, 2 \ol{m}_2+1 \in \big(\ZZ/ N_{\ord} \ZZ \big)^{\times}$, 
we conclude that $2 m+1$ represents a unit in the ring $\ZZ/ N_{\ord} \ZZ$. 

We may assume, without loss of generality, that $f_1, f_2 \in [\F_2, \F_2]$. 
Hence $f:= f_1 E_{m_1, f_1}(f_2)$ also belongs to the commutator subgroup
$[\F_2, \F_2]$. 

We proved that $(m,f)$ represents a charming $\GT$-pair with the target $\N^{(1)}$.

Recall that, since the pair $(m,f)$ satisfies hexagon relations \eqref{hexa1} 
and \eqref{hexa11} (modulo $\N^{(1)}$), the formulas 
$$
T_{m,f}(\si_1) := \si_1^{2m+1} \N^{(1)}, \qquad 
T_{m,f}(\si_2) := f^{-1} \si^{2m+1} f \N^{(1)},
$$
define a group homomorphism $T_{m,f} : \B_3 \to \B_3 / \N^{(1)}$. 

To show that the pair $(m,f)$ represents a $\GT$-shadow 
with the target $\N^{(1)}$, we need to prove that the group homomorphism 
$T_{m,f} : \B_3 \to \B_3 / \N^{(1)}$ is onto. 

Applying $T^{\isom}_{m_1, f_1} \circ T_{m_2, f_2}$ to the generators 
$\si_1$ and $\si_2$ and using \eqref{2m+1}, we see that 
$$
T^{\isom}_{m_1, f_1} \circ T_{m_2, f_2}(\si_1) = T_{m,f}(\si_1),
\qquad
T^{\isom}_{m_1, f_1} \circ T_{m_2, f_2}(\si_2) = T_{m,f}(\si_2).
$$
Therefore, 
\begin{equation}
\label{T-isom-T-is-T}
T^{\isom}_{m_1, f_1} \circ T_{m_2, f_2} = T_{m,f}\,.
\end{equation}
Hence $T_{m,f}$ is onto. Thus the pair $(m,f)$ indeed represents 
a $\GT$-shadow with the target $\N^{(1)}$. 

Combining identity \eqref{T-isom-T-is-T} with $\N^{(3)} = \ker(T_{m_2, f_2})$, we 
conclude that $\ker(T_{m,f}) = \N^{(3)}$. Hence, $T_{m,f}$ factors as
$$
T_{m,f} = T^{\isom}_{m,f} \circ \cP_{\N^{(3)}}, 
$$
where $T^{\isom}_{m,f}$ is the isomorphism $\B_3/ \N^{(3)} \iso \B_3/ \N^{(1)}$ defined 
by the formula $T^{\isom}_{m,f} (w \N^{(3)}) := T_{m,f}(w)$.

We proved the first statement of the proposition (see \eqref{result-in-GTSh-3-1}).

It is clear that $m+N_{\ord}\ZZ$ depends only the residue classes of
$m_1$ and $m_2$ in $\ZZ/N_{\ord}\ZZ$. 

Let $h_1 \in \N^{(1)}_{\F_2}$ and $h_2 \in \N^{(2)}_{\F_2}$. 
It is clear that $T^{\F_2}_{m_1+ t N_{ord}, f_1 h_1} = T^{\F_2}_{m_1,f_1}$ for 
every $t \in \ZZ$. 
Due to \eqref{E-T-F2} and $\ker(T^{\F_2}_{m_1,f_1}) = \N^{(2)}_{\F_2}$, we have
$E_{m_1, f_1}(h_2) \in \N^{(1)}_{\F_2}$. Hence 
$$
f_1 h_1 E_{m_1,f_1}(f_2 h_2) \N^{(1)}_{\F_2} = f_1 E_{m_1,f_1}(f_2) \N^{(1)}_{\F_2} = f \N^{(1)}_{\F_2}. 
$$
We proved that the $\GT$-shadow $[m,f] \in \GT(\N^{(1)})$ depends only on 
the cosets $f_1\N^{(1)}$, $f_2 \N^{(2)}$ and residue classes 
$m_1 + N_{\ord}\ZZ$, $m_2 + N_{\ord}\ZZ$.
 
It should now be clear that diagram \eqref{composition-diag} commutes. 
Indeed, the inner ``straight'' triangles commute by definition of $T^{\isom}_{m_1,f_1}$
and $T^{\isom}_{m_2, f_2}$ (see equation \eqref{Tmf-isom}). 

The triangle with the vertices $\B_3$, $\B_3/ \N^{(2)}$, $\B_3/ \N^{(1)}$ 
and the ``curved'' arrow $T_{m,f}$ commutes
due to identity \eqref{T-isom-T-is-T}. 

The definition of $T^{\isom}_{m,f}$ gives us the commutativity of the outer ``curved'' triangle
(i.e. the triangle with the vertices $\B_3$,  $\B_3/ \N^{(3)}$ and  $\B_3/ \N^{(1)}$). Combining the 
commutativity of  the outer ``curved'' triangle with identity \eqref{T-isom-T-is-T}, we conclude
that the lower ``curved'' triangle also commutes. 
 
Proposition \ref{prop:composition} is proved.
\end{proof}

We are now ready to prove that $\GTSh$ is indeed a groupoid. 
\begin{thm}  
\label{thm:GTSh}
Let  $\N^{(1)}, \N^{(2)}, \N^{(3)} \in \NFI_{\PB_3}(\B_3)$, 
$[m_1, f_1] \in \GTSh(\N^{(2)}, \N^{(1)})$,  $[m_2, f_2] \in \GT(\N^{(3)}, \N^{(2)})$
and $N_{\ord}:= N^{(1)}_{\ord} = N^{(2)}_{\ord} = N^{(3)}_{\ord}$. The formula 
\begin{equation}
\label{GTSh-composition}
[m_1, f_1] \circ [m_2,  f_2]  := [2m_1 m_2 + m_1 +m_2 , f_1 E_{m_1, f_1}(f_2)]
\end{equation}
defines a composition of morphisms in $\GTSh$. For every $\N \in \NFI_{\PB_3}(\B_3)$, the pair 
$(0, 1_{\F_2})$ represents the identity morphism in $\GTSh(\N, \N)$.
Finally, for every $[m,f] \in \GTSh(\K, \N)$, the formulas
\begin{equation}
\label{inverse}
\ti{m} + N_{\ord} \ZZ := - (2\ol{m}+1)^{-1} \ol{m}, 
\qquad
\ti{f} \K_{\F_2} := (T^{\F_2, \isom}_{m,f})^{-1}\big( f^{-1} \N_{\F_2} \big) 
\end{equation}
define the inverse $[\ti{m}, \ti{f}] \in \GTSh(\N, \K)$ of the morphism $[m,f]$.
\end{thm}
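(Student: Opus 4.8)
The plan is to establish the category axioms first and the groupoid property last. By Proposition \ref{prop:K-ord-N-ord}, whenever $\GTSh(\N^{(2)},\N^{(1)})$ and $\GTSh(\N^{(3)},\N^{(2)})$ are both non-empty one automatically has $N^{(1)}_{\ord}=N^{(2)}_{\ord}=N^{(3)}_{\ord}$, so the hypothesis of Proposition \ref{prop:composition} is satisfied on every composable pair of morphisms; that proposition then shows that \eqref{GTSh-composition} yields a well-defined map $\GTSh(\N^{(2)},\N^{(1)})\times\GTSh(\N^{(3)},\N^{(2)})\to\GTSh(\N^{(3)},\N^{(1)})$. For the identity, note that $T_{0,1_{\F_2}}(\si_1)=\si_1\N$ and $T_{0,1_{\F_2}}(\si_2)=\si_2\N$, so $T_{0,1_{\F_2}}=\cP_{\N}$; the hexagon relations modulo $\N$ for $(0,1_{\F_2})$ are tautologies, $1=2\cdot 0+1$ is a unit, $1_{\F_2}\in[\F_2,\F_2]$, and $\cP_{\N}$ is onto with kernel $\N$, so $[0,1_{\F_2}]\in\GTSh(\N,\N)$. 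Associativity and the identity laws are inherited from the monoid $(\ZZ\times\F_2,\bullet)$: both iterated composites of morphisms are represented by the pair $((m_1,f_1)\bullet(m_2,f_2))\bullet(m_3,f_3)=(m_1,f_1)\bullet((m_2,f_2)\bullet(m_3,f_3))$, which by the last statement of Proposition \ref{prop:composition} determines the resulting $\GT$-shadow, and $(0,1_{\F_2})$ is the identity of that monoid.

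It remains to show that \eqref{inverse} produces a two-sided inverse. Fix $[m,f]\in\GTSh(\K,\N)$ written with $f\in[\F_2,\F_2]$, and set $\phi:=(T^{\isom}_{m,f})^{-1}\circ\cP_{\N}:\B_3\to\B_3/\K$, a surjective homomorphism with $\ker\phi=\N$. Since $2\ol m+1\in(\ZZ/N_{\ord}\ZZ)^{\times}$, the class $\ti m+N_{\ord}\ZZ:=-(2\ol m+1)^{-1}\ol m$ is well-defined, $2\ti m+1\equiv(2\ol m+1)^{-1}$, and $(2m+1)(2\ti m+1)=2(2m\ti m+m+\ti m)+1\equiv1\pmod{2N_{\ord}}$. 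As $T^{\F_2,\isom}_{m,f}:\F_2/\K_{\F_2}\iso\F_2/\N_{\F_2}$ carries the commutator subgroup onto the commutator subgroup, $\ti f\K_{\F_2}:=(T^{\F_2,\isom}_{m,f})^{-1}(f^{-1}\N_{\F_2})$ is well-defined with a representative in $[\F_2,\F_2]$, and by \eqref{E-T-F2} it is characterized by $E_{m,f}(\ti f)\equiv f^{-1}\pmod{\N_{\F_2}}$, i.e.\ $T_{m,f}(\ti f)=f^{-1}\N$. Using $T^{\isom}_{m,f}(w\K)=T_{m,f}(w)$, the identity $T_{m,f}(\ti f)=f^{-1}\N$, and the divisibilities $\ord(\si_1\N)\mid 2\,\ord(x_{12}\N)$ and $\ord(\si_2\N)\mid 2\,\ord(x_{23}\N)$ (both dividing $2N_{\ord}$), one checks $\phi(\si_1)=\si_1^{2\ti m+1}\K$ and $\phi(\si_2)=\ti f^{-1}\si_2^{2\ti m+1}\ti f\,\K$. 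Thus the formulas \eqref{Tmf-dfn} for $(\ti m,\ti f)$ do define a homomorphism, namely $\phi$; in particular $T_{\ti m,\ti f}=\phi$ is onto with kernel $\N$.

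The heart of the matter is the hexagon relations modulo $\K$ for $(\ti m,\ti f)$. As in the computation \eqref{LHS-hexa1} and its mirror (compare the proof of Proposition \ref{prop:T-m-f} and the remarks following it), for a homomorphism $T$ given by the standard formulas with parameters $(\ti m,\ti f)$ the relation \eqref{hexa1} modulo $\K$ is equivalent to $T(\D)=\ti f^{-1}\D c^{\ti m}\,\K$, and \eqref{hexa11} modulo $\K$ to $T(\D)=\D c^{\ti m}\ti f\,\K$. I compute $\phi(\D)=(T^{\isom}_{m,f})^{-1}(\D\N)$ by exhibiting two preimages of $\D\N$ under $T^{\isom}_{m,f}$: using $T_{m,f}(\ti f)=f^{-1}\N$, $T_{m,f}(\D)=f^{-1}\D c^{m}\N$ (by \eqref{LHS-hexa1}), $T_{m,f}(c)=c^{2m+1}\N$ (by \eqref{T-m-f-PB-3}), and $m+(2m+1)\ti m\equiv0\pmod{N_{\ord}}$, one obtains $T^{\isom}_{m,f}(\ti f^{-1}\D c^{\ti m}\,\K)=\D\N$; invoking additionally relation H-I for $(m,f)$ (Proposition \ref{prop:H-I}, i.e.\ $\te(f)^{-1}\equiv f\pmod{\N}$) together with \eqref{conj-by-D}, one obtains $T^{\isom}_{m,f}(\D c^{\ti m}\ti f\,\K)=\D\N$ as well. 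Hence $\phi(\D)=\ti f^{-1}\D c^{\ti m}\K=\D c^{\ti m}\ti f\K$, so (as $T_{\ti m,\ti f}=\phi$) both hexagon relations hold modulo $\K$; together with the unit condition on $2\ti m+1$ and $\ti f\in[\F_2,\F_2]$ this shows $[\ti m,\ti f]\in\GT(\K)$, and since $\ker T_{\ti m,\ti f}=\N$ we conclude $[\ti m,\ti f]\in\GTSh(\N,\K)$.

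Finally, $[m,f]\circ[\ti m,\ti f]$ is represented by the pair $(2m\ti m+m+\ti m,\;f\,E_{m,f}(\ti f))$, whose first coordinate is $\equiv0\pmod{N_{\ord}}$ and whose second satisfies $f\,E_{m,f}(\ti f)\,\N_{\F_2}=\N_{\F_2}$, so $[m,f]\circ[\ti m,\ti f]=[0,1_{\F_2}]=\id_{\N}$; symmetrically, using $E_{\ti m,\ti f}(f)\,\K_{\F_2}=T^{\F_2}_{\ti m,\ti f}(f)=\phi(f)=\ti f^{-1}\K_{\F_2}$, one gets $[\ti m,\ti f]\circ[m,f]=[0,1_{\F_2}]=\id_{\K}$. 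Hence $[\ti m,\ti f]$ is the two-sided inverse of $[m,f]$ and $\GTSh$ is a groupoid. The step I expect to be the main obstacle is the third paragraph — extracting the hexagon relations for the candidate inverse; the decisive trick is to rephrase those relations through the single value $T_{\ti m,\ti f}(\D)$ and then to evaluate $(T^{\isom}_{m,f})^{-1}(\D\N)$ in closed form. (One could shorten the final paragraph via the general fact that a category in which every morphism admits a right inverse is a groupoid, so that checking $[m,f]\circ[\ti m,\ti f]=\id_{\N}$ would suffice; but that does not circumvent the hexagon verification needed to know that $[\ti m,\ti f]$ is a morphism of $\GTSh$ at all.)
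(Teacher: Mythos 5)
Your proposal is correct, and for the first two bullet points of the theorem (composition, identity) it follows the paper exactly. For the inverse, however, you take a genuinely different route that is worth noting. The paper first verifies the two hexagon relations for $(\ti m, \ti f)$ modulo $\K$ by applying $T^{\isom}_{m,f}$ to both sides of each candidate relation and showing that both sides map to $\si_2\si_1\N$, then invokes Proposition~\ref{prop:T-m-f} to conclude that $T_{\ti m,\ti f}$ is a well-defined homomorphism, and finally checks $T^{\isom}_{m,f}\circ T_{\ti m,\ti f}=\cP_\N$. You invert this order: you construct $\phi:=(T^{\isom}_{m,f})^{-1}\circ\cP_\N$ directly (so surjectivity and $\ker\phi=\N$ are free), check on generators that $\phi$ agrees with the formulas \eqref{Tmf-dfn} for $(\ti m,\ti f)$ — so that $T_{\ti m,\ti f}=\phi$ is a homomorphism without any appeal to Proposition~\ref{prop:T-m-f} — and then reduce both hexagon relations to the single statement $T_{\ti m,\ti f}(\D)=\ti f^{-1}\D c^{\ti m}\K=\D c^{\ti m}\ti f\K$, which you verify by evaluating $(T^{\isom}_{m,f})^{-1}(\D\N)$ two ways, the second using Proposition~\ref{prop:H-I} and \eqref{conj-by-D}. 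This reformulation through the single value $T(\D)$ is a neat observation (it is essentially the content of the two displayed computations immediately after Proposition~\ref{prop:T-m-f}, run in reverse) and buys a cleaner logical organization: the well-definedness of $T_{\ti m,\ti f}$ is decoupled from the hexagon check. The amount of computation is roughly the same, and both proofs ultimately hinge on applying $T^{\isom}_{m,f}$ and using $m+(2m+1)\ti m\equiv 0\ \mathrm{mod}\ N_{\ord}$, $T_{m,f}(\ti f)=f^{-1}\N$, and $T_{m,f}(c)=c^{2m+1}\N$. The concluding paragraph on $[m,f]\circ[\ti m,\ti f]$ and $[\ti m,\ti f]\circ[m,f]$ matches the paper.
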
  
\begin{proof}
Due to Proposition \ref{prop:composition}, formula \eqref{GTSh-composition} indeed 
defines a map 
$$
\GTSh(\N^{(2)}, \N^{(1)}) \times \GT(\N^{(3)}, \N^{(2)}) \to 
  \GT(\N^{(3)}, \N^{(1)}). 
$$

Since the binary operation $\bullet$ on $\ZZ \times \F_2$ defined in \eqref{bullet-Z-F2}
is associative, the composition of morphisms in $\GTSh$ is also associative. 

It is easy to see that the pair $(0,1_{\F_2})$ represents a $\GT$-shadow 
in $\GTSh(\N, \N)$ for every $\N \in \NFI_{\PB_3}(\B_3)$.
Moreover, since $(0,1_{\F_2})$ is the identity element of the monoid  
$(\ZZ \times \F_2, \bullet)$, $[0,1_{\F_2}]$ is indeed the identity morphism 
in $\GTSh(\N, \N)$ for every $\N \in \NFI_{\PB_3}(\B_3)$.

To take care of the inverse, we start with $[m,f] \in  \GTSh(\K, \N)$ 
and assume that the pair 
$(\ti{m} + K_{\ord} \ZZ, \ti{f}\K_{\F_2}) \in \ZZ/K_{\ord}\ZZ \times \F_2 /\K_{\F_2}$ 
is given by the formulas\footnote{Since $\GTSh(\K, \N)$ is  non-empty, $K_{\ord} = N_{\ord}$.}
\eqref{inverse}. We denote by $\ti{m}$ (resp. $\ti{f}$) any representative of the coset 
$- (2\ol{m}+1)^{-1} \ol{m}$ (resp. the coset 
$(T^{\F_2, \isom}_{m,f})^{-1}\big( f^{-1} \N_{\F_2} \big)$) in 
$\ZZ/N_{\ord}\ZZ$ (resp. in $\F_2 /\K_{\F_2}$). 

The equations in \eqref{inverse} are equivalent to 
\begin{equation}
\label{inverse-better}
2m\ti{m} + \ti{m} + m \equiv 0 \mod  N_{\ord}\,,
\qquad
T^{\F_2, \isom}_{m,f} (\ti{f} \K_{\F_2}) := f^{-1} \N_{\F_2}\,. 
\end{equation}

The first equation in \eqref{inverse-better} implies that 
\begin{equation}
\label{2m+1-inv}
(2m+1) (2\ti{m}+1) \equiv 1  \mod 2 N_{\ord}\,.
\end{equation}
Hence $2 \ti{m}+1$ represents a unit in $\ZZ/N_{\ord}\ZZ$. 

Since 
$$
\si_1^{2 N_{\ord}}, ~\si_2^{2 N_{\ord}}  \in \N,  
$$
identity \eqref{2m+1-inv} implies that 
\begin{equation}
\label{si1-si2-powers}
\si_1^{(2m+1) (2\ti{m}+1)} \N = \si_1 \N, 
\qquad 
\si_2^{(2m+1) (2\ti{m}+1)} \N = \si_2 \N.
\end{equation}

Since $f^{-1} \N_{\F_2}$ belongs to $[\F_2/\N_{\F_2} , \F_2/\N_{\F_2}]$, so does 
$\ti{f} \K_{\F_2}$. 

Let us prove that the pair $(\ti{m}, \ti{f})$ satisfies 
\eqref{hexa1} and \eqref{hexa11} (modulo $\K$).  

Applying $T^{\isom}_{m,f}$ to 
$\ti{f}^{-1} \si_2^{2\ti{m}+1} \ti{f} \si_1^{2\ti{m}+1} \K$ and 
using the second equation in \eqref{inverse-better} and 
identities \eqref{si1-si2-powers},
we get
$$
T^{\isom}_{m,f} \big(
\ti{f}^{-1} \si_2^{2\ti{m}+1} \ti{f} \si_1^{2\ti{m}+1} \K
\big) = 
f f^{-1} \si_2^{(2m+1)(2\ti{m}+1)} f f^{-1} \si_1^{(2m+1)(2\ti{m}+1)} \N = 
\si_2 \si_1 \N.
$$

Furthermore, applying $T^{\isom}_{m,f}$ to $\si_2 \si_1 c^{\ti{m}} x_{23}^{- \ti{m}} \ti{f} \K$ and 
using hexagon relation \eqref{hexa11} for $(m,f)$ and the first equation in \eqref{inverse-better}, 
we get 
$$
T^{\isom}_{m,f}
\big( \si_2 \si_1 c^{\ti{m}} x_{23}^{- \ti{m}} \ti{f} \K \big) = 
(f^{-1} \si_2^{2m+1} f \si_1^{2m+1}) \N \, (c^{(2m+1)\ti{m}} f^{-1} x_{23}^{- (2m+1)\ti{m}} f) \N =
$$
$$
\si_2 \si_1 c^{m} x_{23}^{-m} f c^{(2m+1)\ti{m}} f^{-1} x_{23}^{- (2m+1)\ti{m}} f  f^{-1}\, \N = 
\si_2 \si_1 c^{2m\ti{m}+\ti{m}+m}  x_{23}^{-(2m\ti{m}+\ti{m}+m)} \, \N = \si_2 \si_1 \N.
$$

Since 
$$
T^{\isom}_{m,f} \big(
\ti{f}^{-1} \si_2^{2\ti{m}+1} \ti{f} \si_1^{2\ti{m}+1} \K
\big)
~ = ~
T^{\isom}_{m,f}\big( \si_2 \si_1 c^{\ti{m}} x_{23}^{- \ti{m}} \ti{f} \K \big) 
$$
and $T^{\isom}_{m,f}$ is an isomorphism, we conclude that the 
pair  $(\ti{m} , \ti{f})$ satisfies hexagon relation \eqref{hexa11}. 

Applying $T^{\isom}_{m,f}$ to both sides of
$$
\si_1^{2\ti{m}+1}  \, \ti{f}^{-1} \si_2^{2\ti{m}+1} \ti{f}\, \K ~ \overset{?}{=} ~ 
\ti{f}^{-1} \si_1 \si_2 x_{12}^{-\ti{m}} c^{\ti{m}} \, \K
$$
and performing similar calculations, we see that the pair $(\ti{m}, \ti{f})$
also satisfies hexagon relation \eqref{hexa1}. 

Using the equations in \eqref{inverse-better} we see that 
the composition
$$
T^{\isom}_{m,f} \circ T_{\ti{m}, \ti{f} } : \B_3 \to \B_3/\N
$$
coincides with the standard projection $\cP_{\N} : \B_3 \to \B_3/\N$. 
Hence the group homomorphism $ T_{\ti{m}, \ti{f}} : \B_3 \to \B_3 /\K$ is onto 
and 
$$
\ker( T_{\ti{m}, \ti{f} }) = \N.
$$

Thus we proved that 
$$
(\ti{m} + N_{\ord} \ZZ, \ti{f}\K_{\F_2}) \in \GTSh(\N, \K).
$$

The equations in \eqref{inverse-better} imply that 
$$
[m,f] \circ [\ti{m}, \ti{f}] = [0, 1_{\F_2}]. 
$$

Since 
$$
[\ti{m}, \ti{f}] \circ [m,f] =  
\big(2\ti{m}m + m + \ti{m} + N_{\ord}\ZZ \,,\, 
\ti{f} \K_{\F_2} \, T^{\F_2}_{\ti{m}, \ti{f}}(f) 
\big) = \big(N_{\ord}\ZZ \,,\, 
\ti{f} \K_{\F_2} \, T^{\F_2}_{\ti{m}, \ti{f}}(f) 
\big)
$$
it remains to prove that
\begin{equation}
\label{f-part}
\ti{f} \K_{\F_2} \, T^{\F_2}_{\ti{m}, \ti{f}}(f) \, \overset{?}{=} \, 1_{\F_2/ \K_{\F_2}}.
\end{equation}

Applying $T^{\F_2, \isom}_{m,f}$ to the left hand side of \eqref{f-part}
and using $T^{\isom}_{m,f} \circ T_{\ti{m}, \ti{f} } = \cP_{\N}$, 
we get 
$$
T^{\F_2, \isom}_{m,f}
\big(
\ti{f} \K_{\F_2} \, T^{\F_2}_{\ti{m}, \ti{f}}(f)
\big) = f^{-1} \N_{\F_2} \, f \N_{\F_2}  =  1_{\F_2/ \N_{\F_2}}.
$$

Thus, since $T^{\F_2, \isom}_{m,f}$ is an isomorphism from 
$\F_2/ \N_{\F_2}$ to $\F_2/ \K_{\F_2}$, we conclude that identity \eqref{f-part} holds.

Theorem \ref{thm:GTSh} is proved. 
\end{proof}
\begin{remark}  
\label{rem:same-indices}
Proposition \ref{prop:K-ord-N-ord} implies that, if $\GTSh(\K, \N)$ is non-empty, then 
$$
|\PB_3: \K| = |\PB_3: \N|, \qquad
|\F_2: \K_{\F_2}| = |\F_2: \N_{\F_2}|, \qquad K_{\ord}=N_{\ord}.
$$ 
\end{remark}

\subsection{The reduction map}
\label{sec:cR-N-H}

Let $\N, \H \in \NFI_{\PB_3}(\B_3)$ and $\N \le \H$. 
In the following proposition, we consider this situation and get 
a natural map $\cR_{\N, \H} : \GT(\N) \to \GT(\H)$.

\begin{prop}  
\label{prop:get-cR-N-H}
Let $\N, \H \in \NFI_{\PB_3}(\B_3)$, $\N \le \H$ and $(m,f) \in \ZZ \times \F_2$ represent 
a $\GT$-pair with the target $\N$. Then $H_{\ord} | N_{\ord}$, $\N_{\F_2} \le \H_{\F_2}$ and 

\begin{itemize}

\item[{\rm a)}] the same pair $(m,f)$ also represents an element in $\GT_{pr}(\H)$; 
moreover the resulting $\GT$-pair $[m,f] \in \GT_{pr}(\H)$ depends only 
on $(m + N_{\ord} \ZZ, f \N_{\F_2}) $; 

\item[{\rm b)}] if the $\GT$-pair $[m,f] \in \GT_{pr}(\N)$ is charming then so is the corresponding 
$\GT$-pair in $\GT_{pr}(\H)$; 

\item[{\rm c)}] if the pair $(m, f)$ represents a $\GT$-shadow with the target $\N$, then $(m, f)$ 
also represents a $\GT$-shadow with the target $\H$.
 
\end{itemize}
Let us denote by $T_{m,f,\H}$ the group homomorphism 
$\B_3 \to \B_3/\H$ corresponding to 
$[m,f] \in \GT_{pr}(\H)$. In the set-up of statement {\rm a)}, the following diagram 
\begin{equation}
\label{diag-Tmf-N-H}
\begin{tikzpicture}
\matrix (m) [matrix of math nodes, row sep=1.8em, column sep=1.8em]
{\B_3    & ~ &  \B_3/\N & ~ \\
 ~ & \B_3/\H & ~\\};
\path[->, font=\scriptsize]
(m-1-1) edge node[above] {$T_{m, f}$} (m-1-3)
edge node[left] {$T_{m, f, \H}~$} (m-2-2)
(m-1-3) edge node[right] {$~~\cP_{\N, \H}$} (m-2-2);
\end{tikzpicture}
\end{equation}
commutes.
\end{prop}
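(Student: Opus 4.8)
The plan is to reduce every assertion to two elementary facts about the inclusion $\N \le \H$: the existence of the canonical surjection $\cP_{\N, \H} : \B_3/\N \to \B_3/\H$ (which restricts to surjections on the $\PB_3$- and $\F_2$-quotients), and the bookkeeping identities $H_{\ord} \mid N_{\ord}$ and $\N_{\F_2} \le \H_{\F_2}$. Once these are in hand, ``passing from modulo $\N$ to modulo $\H$'' simply amounts to applying $\cP_{\N, \H}$ to every relation, and nothing is lost beyond what the coarser data $(m + N_{\ord}\ZZ,\, f\N_{\F_2}) \mapsto (m + H_{\ord}\ZZ,\, f\H_{\F_2})$ already records.

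First I would establish the two bookkeeping facts. The inclusion $\N_{\F_2} = \N \cap \F_2 \le \H \cap \F_2 = \H_{\F_2}$ is immediate. For $H_{\ord} \mid N_{\ord}$, I would apply $\cP_{\N, \H}$ to each of $x_{12}\N$, $x_{23}\N$, $c\N$: the order of the image $x_{12}\H$ divides $\ord(x_{12}\N)$ and hence divides $N_{\ord}$, and likewise for $x_{23}$ and $c$; therefore $H_{\ord} = \lcm(\ord(x_{12}\H), \ord(x_{23}\H), \ord(c\H))$ divides $N_{\ord}$. Together these two facts show that the reduction map $\ZZ/N_{\ord}\ZZ \times \F_2/\N_{\F_2} \to \ZZ/H_{\ord}\ZZ \times \F_2/\H_{\F_2}$ is well defined.

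For part a), I would apply $\cP_{\N, \H}$ to hexagon relations \eqref{hexa1} and \eqref{hexa11} (which are equalities of cosets in $\B_3/\N$): their images are the corresponding equalities in $\B_3/\H$, so $(m,f)$ satisfies the hexagon relations modulo $\H$ and hence, by Proposition \ref{prop:T-m-f}, determines the homomorphism $T_{m,f,\H}$. The dependence claim is exactly the well-definedness of the reduction map above restricted to $\GT$-pairs. For part b), a unital ring homomorphism $\ZZ/N_{\ord}\ZZ \to \ZZ/H_{\ord}\ZZ$ (which exists because $H_{\ord}\mid N_{\ord}$) carries units to units, so $2m+1$ remains a unit modulo $H_{\ord}$; and since the charming hypothesis lets us pick a representative $f \in [\F_2, \F_2]$, that same $f$ witnesses $f\H_{\F_2} \in [\F_2/\H_{\F_2}, \F_2/\H_{\F_2}]$.

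For the commuting square I would simply check on the generators $\si_1, \si_2$ of $\B_3$: $\cP_{\N, \H}(T_{m,f}(\si_1)) = \si_1^{2m+1}\H = T_{m,f,\H}(\si_1)$ and similarly for $\si_2$, so $T_{m,f,\H} = \cP_{\N, \H} \circ T_{m,f}$. Part c) then falls out: $T_{m,f,\H}$ is a composition of two surjections, hence surjective, so by condition 1) of Proposition \ref{prop:onto} the charming $\GT$-pair $[m,f] \in \GT_{pr}(\H)$ is a $\GT$-shadow with target $\H$ (Definition \ref{dfn:GT-shadows}). I do not expect any genuine obstacle here; the only point requiring a little care is to keep $N_{\ord}$ and $H_{\ord}$ (and $\N_{\F_2}$ and $\H_{\F_2}$) distinct and to invoke $H_{\ord} \mid N_{\ord}$ and $\N_{\F_2} \le \H_{\F_2}$ before asserting that the class $(m + N_{\ord}\ZZ, f\N_{\F_2})$ determines the $\GT$-pair over $\H$.
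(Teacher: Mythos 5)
Your proposal is correct and follows essentially the same route as the paper's own proof: establish $H_{\ord}\mid N_{\ord}$ and $\N_{\F_2}\le\H_{\F_2}$ via $\cP_{\N,\H}$, push the hexagon relations forward along $\cP_{\N,\H}$ for a), use divisibility and a commutator representative for b), verify the diagram on the generators $\si_1,\si_2$, and deduce c) from surjectivity of the composition. No gaps.
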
  
\begin{proof} Since 
$$
\cP_{\N,\H}(x_{12} \N)= x_{12} \H, \qquad 
\cP_{\N, \H}(x_{23} \N)= x_{23} \H, \qquad
\cP_{\N, \H}(c \N)= c \H,
$$
$\ord(x_{12} \H) | \ord(x_{12} \N)$, $\ord(x_{23} \H) | \ord(x_{23} \N)$ and $\ord(c \H) | \ord(c \N)$. 
Hence $H_{\ord}$ divides $N_{\ord}$. The inclusion $\N_{\F_2} \le \H_{\F_2}$ is obvious.

~\\
a) Applying the homomorphism 
$\cP_{\N, \H}: \B_3/\N \to \B_3/\H$ to \eqref{hexa1} and \eqref{hexa11}, we see that the pair 
$(m,f)$ satisfies the hexagon relations modulo $\H$ if it satisfies the hexagon relations 
modulo $\N$. 
Thus $(m,f)$ represents an element in $\GT_{pr}(\H)$. 

It is obvious that the resulting $\GT$-pair $[m,f] \in \GT_{pr}(\H)$ depends only on the 
residue class of $m$ modulo $N_{\ord}$ and the coset $f\N_{\F_2}$. 

As above, we denote by $T_{m,f,\H}$ the group homomorphism $\B_3 \to \B_3/\H$ 
corresponding to $[m,f] \in \GT_{pr}(\H)$. 
Applying $T_{m,f,\H}$ and $\cP_{\N, \H} \circ T_{m,f}$ to the generators $\si_1, \si_2$, 
we see that the diagram in \eqref{diag-Tmf-N-H} indeed commutes. 

~\\
b) Since $2m+1$ represents a unit in $\ZZ/N_{\ord} \ZZ$, $2m+1$ also 
represents a unit in $\ZZ/H_{\ord} \ZZ$. 
Since $f \N_{\F_2}$ belongs to 
the commutator subgroup $[\F_2/\N_{\F_2}, \F_2/\N_{\F_2}]$, we have 
$$
f \H_{\F_2} \in [\F_2/\H_{\F_2}, \F_2/\H_{\F_2}]. 
$$
Thus $(m,f)$ represents a charming $\GT$-pair with the target $\H$.  

~\\
c) This statement follows easily from the commutativity of the diagram in 
\eqref{diag-Tmf-N-H} and the surjectivity of the homomorphism $\cP_{\N, \H}$. 
\end{proof}

Due to Proposition \ref{prop:get-cR-N-H}, the formula
\begin{equation}
\label{cR-N-H}
\cR_{\N, \H}([m,f]) := \big( m+ H_{\ord} \ZZ, f \H_{\F_2} \big)
\end{equation}
defines a map $\cR_{\N, \H} : \GT(\N) \to \GT(\H)$. We call $\cR_{\N, \H}$ the 
\e{reduction map}.  

Just as in \cite[Definition 3.12]{GTshadows}, we say that a $\GT$-shadows 
$[m,f] \in \GT(\H)$ \e{survives into} $\N$ if $[m,f]$ belongs to the image of $\cR_{\N, \H}$.

\subsection{Connected compsonents of the groupoid $\GTSh$ and its isolated objects}
\label{sec:isolated}

The groupoid $\GTSh$ is highly disconnected. 
Indeed, if $|\PB_3: \N| \neq |\PB_3: \K|$, then 
$\GTSh(\K, \N)$ is empty (see Remark \ref{rem:same-indices}).
 For $\N \in \NFI_{\PB_3}(\B_3)$, we denote by 
$\GTSh_{\conn}(\N)$ the connected component of $\N$ in the groupoid $\GTSh$. 
Since, for every $\N \in \NFI_{\PB_3}(\B_3)$, $\GT(\N)$ is finite, so is the groupoid 
$\GTSh_{\conn}(\N)$.

\begin{defi}  
\label{dfn:isolated}
Let  $\N \in \NFI_{\PB_3}(\B_3)$. A $\GT$-shadow $[m,f] \in \GT(\N)$ is called 
\e{settled} if $\ker(T_{m,f}) = \N$, i.e. $[m,f] \in \GTSh(\N,\N)$. An object
$\N$ of the groupoid $\GTSh$ is called \e{isolated} if every $\GT$-shadow
in $\GT(\N)$ is settled. 
\end{defi}  
It is clear that $\N \in \NFI_{\PB_3}(\B_3)$ is isolated if and only if the connected 
component of $\N$ in the groupoid $\GTSh$ has exactly one object. Of course, in this 
case, $\GT(\N)= \GTSh(\N,\N)$. In particular, $\GT(\N)$ is a group.  

\begin{prop}  
\label{prop:N-diamond}
For every $\N \in \NFI_{\PB_3}(\B_3)$, the subgroup
\begin{equation}
\label{N-diamond}
\N^{\dia} ~: =~ \bigcap_{\K \in \Ob(\GTSh_{\conn}(\N))}\, \K 
\end{equation}
is an isolated object of the groupoid $\GTSh$.
\end{prop}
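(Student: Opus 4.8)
The plan is to show two things about $\N^{\dia}$: first that it is an object of $\GTSh$ (i.e. $\N^{\dia}\in\NFI_{\PB_3}(\B_3)$), and second that every $\GT$-shadow with target $\N^{\dia}$ is settled. The membership in $\NFI_{\PB_3}(\B_3)$ is immediate: $\GTSh_{\conn}(\N)$ has finitely many objects (since each $\GT(\K)$ is finite and the component is finite), each object $\K\le\PB_3$ is a finite-index normal subgroup of $\B_3$, and a finite intersection of such subgroups is again a finite-index normal subgroup of $\B_3$ contained in $\PB_3$. Also $\N^{\dia}\le\N$ since $\N$ is one of the objects in the intersection.

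The heart of the matter is isolatedness. The key structural fact I would exploit is that $\GTSh_{\conn}(\N)$ is a \emph{connected} groupoid: for any two objects $\K_1,\K_2$ in it there is a $\GT$-shadow $[m,f]\in\GTSh(\K_1,\K_2)$, and the associated isomorphism $T^{\isom}_{m,f}:\B_3/\K_1\iso\B_3/\K_2$ together with the functoriality (Proposition \ref{prop:composition}, diagram \eqref{composition-diag}) means that the whole component is governed by a single "shape." I would first argue that the set $\Ob(\GTSh_{\conn}(\N))$ is permuted by composition with any $\GT$-shadow: more precisely, if $[m,f]\in\GTSh(\K_1,\K_2)$ and $\H\in\Ob(\GTSh_{\conn}(\N))$, then postcomposition and precomposition give bijections between hom-sets, so the "action" of the component on its own object set is transitive and by automorphisms of the poset structure. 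The crucial point is then: applying an automorphism $T^{\isom}_{m,f}$ (coming from any $[m,f]$ in the component) to the subgroup $\N^{\dia}$ should send it to itself, because $T^{\isom}_{m,f}$ permutes the objects $\{\K : \K\in\Ob(\GTSh_{\conn}(\N))\}$ and $\N^{\dia}$ is the intersection of exactly that set — an intersection invariant under any permutation of the sets being intersected.

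Concretely, I would take an arbitrary $\GT$-shadow $[m,f]\in\GT(\N^{\dia})$ and let $\K:=\ker(T_{m,f})$; I must show $\K=\N^{\dia}$. Since $\N^{\dia}\le\N$ and (by Proposition \ref{prop:get-cR-N-H}) a $\GT$-shadow with target $\N^{\dia}$ reduces to a $\GT$-shadow with target any $\H\ge\N^{\dia}$, in particular with target each object $\H\in\Ob(\GTSh_{\conn}(\N))$, one gets an induced $\GT$-shadow whose kernel is some object in the component; chasing the commutative diagram \eqref{diag-Tmf-N-H} relating $T_{m,f}$ and $T_{m,f,\H}$ shows $\K\le\H$ for every such $\H$, hence $\K\le\N^{\dia}$. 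For the reverse inclusion, I would use that $T_{m,f}:\B_3\to\B_3/\N^{\dia}$ is onto, so $|\B_3:\K|=|\B_3:\N^{\dia}|$, and combine this with $\K\le\N^{\dia}$ to force $\K=\N^{\dia}$. Then $[m,f]\in\GTSh(\N^{\dia},\N^{\dia})$, i.e. $[m,f]$ is settled; since $[m,f]$ was arbitrary, $\N^{\dia}$ is isolated.

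The step I expect to be the main obstacle is the inclusion $\K\le\N^{\dia}$, i.e. verifying that $\K=\ker(T_{m,f})$ is actually one of the objects of $\GTSh_{\conn}(\N)$ (or at least contained in all of them). The subtlety is that $[m,f]\in\GT(\N^{\dia})$ gives a morphism $\GTSh(\K,\N^{\dia})$, so $\K$ lies in the connected component of $\N^{\dia}$; one must then check that $\GTSh_{\conn}(\N^{\dia})$ and $\GTSh_{\conn}(\N)$ have the same object set, which follows from $\N^{\dia}\in\Ob(\GTSh_{\conn}(\N))$ together with the general fact that being in the same connected component is an equivalence relation on objects. Once that identification is in place — so that $\K\in\Ob(\GTSh_{\conn}(\N))$ and hence $\N^{\dia}\subseteq\K$ by definition of the intersection — the equality of indices closes the argument. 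This last observation, that $\N^{\dia}$ itself lies in the component it is built from, is what makes the whole argument go through, and it is worth stating explicitly.
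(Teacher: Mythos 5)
Your proposal is structured correctly at the top level (membership of $\N^{\dia}$ in $\NFI_{\PB_3}(\B_3)$, then one inclusion plus an index count), and the index-count finishing step is exactly the paper's. But the inclusion itself is not established, and the reason is a false belief flagged in your own last paragraph.

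The claim that $\N^{\dia}\in\Ob(\GTSh_{\conn}(\N))$ is wrong whenever $\N$ is not already isolated --- which is precisely the only non-trivial case. Every object $\K$ of $\GTSh_{\conn}(\N)$ has $|\B_3:\K|=|\B_3:\N|$ (Remark \ref{rem:same-indices}), so if the component has two distinct objects $\K_1\neq\K_2$ then $\N^{\dia}\subseteq \K_1\cap\K_2\subsetneq \K_1$ has strictly larger index in $\B_3$; consequently $\N^{\dia}$ cannot be connected to $\N$ by any $\GT$-shadow and is \emph{not} an object of $\GTSh_{\conn}(\N)$. The same index obstruction kills the deduction you draw from it, namely that $\K=\ker(T_{m,f})$ lies in $\Ob(\GTSh_{\conn}(\N))$: $\K$ has the same $\B_3$-index as $\N^{\dia}$, hence a different index than $\N$. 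Your middle-paragraph alternative is also off: chasing diagram \eqref{diag-Tmf-N-H} gives $\K\le\ker(T_{m,f,\H})$, not $\K\le\H$, and $\ker(T_{m,f,\H})$ is generally a different object of the component than $\H$; without knowing that $\H\mapsto\ker(T_{m,f,\H})$ sweeps out all of $\Ob(\GTSh_{\conn}(\N))$ you cannot conclude $\K\le\N^{\dia}$.

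The observation the paper actually uses is different and genuinely needed: for each $\H\in\Ob(\GTSh_{\conn}(\N))$, the source $\ker(T_{m,f,\H})$ of the reduced $\GT$-shadow $[m,f]_{\H}$ is itself an object of $\GTSh_{\conn}(\N)$ (it is connected to $\H$ by $[m,f]_{\H}$), hence contains $\N^{\dia}$. Taking $w\in\N^{\dia}$ and writing $T_{m,f}(w)=w^{\dia}\N^{\dia}$, the factorization $T_{m,f,\H}=\cP_{\N^{\dia},\H}\circ T_{m,f}$ then forces $w^{\dia}\in\H$ for every $\H$, so $w^{\dia}\in\N^{\dia}$, i.e.\ $T_{m,f}(w)=1$. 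This gives $\N^{\dia}\le\ker(T_{m,f})$ --- the \emph{opposite} inclusion to the one you aimed at in the middle paragraph --- and the index argument then finishes. In short: you need that the kernels $\ker(T_{m,f,\H})$, not $\N^{\dia}$ itself, live in the component, and that gives $\N^{\dia}\le\K$ rather than $\K\le\N^{\dia}$.
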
  
\begin{proof} Since the groupoid $\GTSh_{\conn}(\N)$ has finitely many objects and 
$\NFI_{\PB_3}(\B_3)$ is closed under finite intersections, 
$\N^{\dia}$ belongs to $\NFI_{\PB_3}(\B_3)$. 

To prove that $\N^{\dia}$ is isolated, we consider $[m,f] \in \GT(\N^{\dia})$
and $\K \in \Ob(\GTSh_{\conn}(\N))$.

Since $\N^{\dia} \le \K$, Proposition \ref{prop:get-cR-N-H} implies that
the pair $(m,f)$ also represents a $\GT$-shadow with the target 
$\K$. Just as in Proposition \ref{prop:get-cR-N-H}, we denote 
by $T_{m,f,\K}$ the group homomorphism $\B_3 \to \B_3/\K$ corresponding 
to the $\GT$-shadow $[m,f] \in \GT(\K)$. Let us also recall that 
\begin{equation}
\label{Tmf-K-cP-Tmf}
T_{m,f,\K} = \cP_{\N^{\dia}, \K} \circ T_{m,f}\,.
\end{equation}

Let $w \in \N^{\dia}$. Since $w \in \H$ for every $\H \in \Ob(\GTSh_{\conn}(\N))$, we have 
$$
w \in \ker(T_{m, f,\K})
$$
Let $w^{\dia} \in \B_3$ be a representative of the coset 
$T_{m,f}(w) \in \B_3/\N^{\dia}$. Using \eqref{Tmf-K-cP-Tmf} we conclude that
$w^{\dia} \in \K$ for every $\K \in \Ob(\GTSh_N^{\conn})$. Therefore $w^{\dia} \in \N^{\dia}$ 
and hence $w \in \ker(\B_3 \overset{T_{m,f}}{\tto} \B_3/\N^{\dia})$. 

We proved that $\N^{\dia} \le \ti{\K}$, where $\ti{\K}:=\ker(\B_3 \overset{T_{m,f}}{\tto} \B_3/\N^{\dia})$.
Since $|\B_3 : \tilde{\K}|=|\B_3 : \N^{\dia}|$ (see Proposition \ref{prop:K-ord-N-ord}) and $\N^{\dia}$ 
has finite index in $\B_3$, we conclude 
that $\ker(\B_3 \overset{T_{m,f}}{\tto} \B_3/\N^{\dia}) = \N^{\dia}$. 
\end{proof}

Proposition \ref{prop:N-diamond} implies that the subposet $\NFI^{isolated}_{\PB_3}(\B_3)$ of 
isolated elements in $\NFI_{\PB_3}(\B_3)$ is cofinal, i.e. for every $\N \in \NFI_{\PB_3}(\B_3)$, 
there exists $\ti{\N} \in \NFI^{isolated}_{\PB_3}(\B_3)$ such that $\ti{\N} \le \N$. 

The proof of the following proposition is straightforward and we leave it to the reader: 
\begin{prop}  
\label{prop:cap-isolated}
For all $\N, \K \in \NFI^{isolated}_{\PB_3}(\B_3)$, 
$\N \cap \K \in \NFI^{isolated}_{\PB_3}(\B_3).$
\qed
\end{prop}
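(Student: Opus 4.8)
The plan is to reduce the isolatedness of $\N \cap \K$ to that of $\N$ and of $\K$ by means of the reduction maps introduced in Subsection \ref{sec:cR-N-H}. First I would observe that $\N \cap \K$ genuinely belongs to $\NFI_{\PB_3}(\B_3)$: it is normal in $\B_3$ as an intersection of normal subgroups, it has finite index since both $\N$ and $\K$ do, and it is contained in $\PB_3$. So everything comes down to checking that $\N \cap \K$ is isolated, i.e. that every $\GT$-shadow $[m,f] \in \GT(\N \cap \K)$ is settled in the sense of Definition \ref{dfn:isolated}.

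Fix such a $[m,f]$, represented by a pair $(m,f) \in \ZZ \times \F_2$, and let $T_{m,f} \colon \B_3 \to \B_3/(\N\cap\K)$ be the associated homomorphism, which is onto because $[m,f]$ is a $\GT$-shadow. Since $\N\cap\K \le \N$ and $\N\cap\K \le \K$, part c) of Proposition \ref{prop:get-cR-N-H} says that the same pair $(m,f)$ represents $\GT$-shadows $\cR_{\N\cap\K,\N}([m,f]) \in \GT(\N)$ and $\cR_{\N\cap\K,\K}([m,f]) \in \GT(\K)$; denoting by $T_{m,f,\N} \colon \B_3 \to \B_3/\N$ and $T_{m,f,\K} \colon \B_3 \to \B_3/\K$ the corresponding homomorphisms, the commuting triangle \eqref{diag-Tmf-N-H} gives $T_{m,f,\N} = \cP_{\N\cap\K,\N} \circ T_{m,f}$ and $T_{m,f,\K} = \cP_{\N\cap\K,\K} \circ T_{m,f}$.

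Now I would invoke that $\N$ and $\K$ are isolated: the shadows $\cR_{\N\cap\K,\N}([m,f])$ and $\cR_{\N\cap\K,\K}([m,f])$ are then settled, so $\ker(T_{m,f,\N}) = \N$ and $\ker(T_{m,f,\K}) = \K$. Combined with the two factorizations above, $\ker(T_{m,f})$ is contained in both $\ker(T_{m,f,\N}) = \N$ and $\ker(T_{m,f,\K}) = \K$, hence $\ker(T_{m,f}) \le \N \cap \K$. Since $T_{m,f}$ is surjective, $\B_3/\ker(T_{m,f}) \cong \B_3/(\N\cap\K)$, so $|\B_3 : \ker(T_{m,f})| = |\B_3 : \N\cap\K| < \infty$ (this also follows from Proposition \ref{prop:K-ord-N-ord}); a finite-index subgroup contained in another subgroup of the same finite index must coincide with it, whence $\ker(T_{m,f}) = \N \cap \K$. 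Thus $[m,f]$ is settled, and $\N \cap \K$ is isolated.

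There is no serious obstacle here: the argument is bookkeeping with the commuting triangles of \eqref{diag-Tmf-N-H} and an index count. The one point not to be skipped is that it is precisely the surjectivity of $T_{m,f}$ (equivalently, the third defining condition of a $\GT$-shadow) that upgrades the inclusion $\ker(T_{m,f}) \le \N\cap\K$ to an equality.
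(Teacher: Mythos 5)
Your proof is correct. The paper explicitly leaves this statement to the reader, so there is no authors' proof to compare against, but your argument is precisely the ``straightforward'' one the authors must have had in mind: reduce the given $\GT$-shadow from $\N\cap\K$ to $\N$ and to $\K$ via Proposition \ref{prop:get-cR-N-H}, use isolatedness of $\N$ and $\K$ to pin down $\ker(T_{m,f,\N})=\N$ and $\ker(T_{m,f,\K})=\K$, deduce $\ker(T_{m,f})\le\N\cap\K$ from the factorizations through $\cP_{\N\cap\K,\N}$ and $\cP_{\N\cap\K,\K}$, and close with the index count coming from surjectivity of $T_{m,f}$. No gaps.
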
  

\begin{remark}  
\label{rem:cR-homomorphism}
Let $\N, \H \in \NFI^{isolated}_{\PB_3}(\B_3)$ and $\N \le \H$. Recall that, 
in this case, $\GT(\N) = \GTSh(\N, \N)$ and $\GT(\H) = \GTSh(\H, \H)$, i.e. 
$\GT(\N)$ and $\GT(\H)$ are (finite) groups. 
It is easy to see that the reduction map $\cR_{\N, \H}: \GT(\N) \to \GT(\H)$ (see \eqref{cR-N-H}) 
is a group homomorphism. 
Indeed, both $[m, f] \in \GT(\N)$ and $\cR_{\N, \H}([m,f]) \in \GT(\H)$ are represented by the same 
pair $(m,f) \in \ZZ \times \F_2$ and the composition of $\GT$-shadows is 
defined in terms of their representatives 
(see equation \eqref{GTSh-composition} in Theorem \ref{thm:GTSh}). 
If $\N, \H \in \NFI^{isolated}_{\PB_3}(\B_3)$ and $\N \le \H$, we call 
$\cR_{\N, \H}: \GT(\N) \to \GT(\H)$ the \e{reduction homomorphism}. 
\end{remark}

\section{The transformation groupoid $\GTh^{gen}_{\NFI}$ and genuine $\GT$-shadows}
\label{sec:GThat-NFI}

Let $\N \in \NFI_{\PB_3}(\B_3)$ and $(\hat{m}, \hat{f}) \in \GTh_{gen}$. 
Recall that $\hcP_{N}$ denotes the standard (continuous) group homomorphism 
from $\wh{\B}_3$ to $\B_3/ \N$ and $T_{\hat{m}, \hat{f}}$ denotes the 
continuous automorphism of $\wh{\B}_3$ defined in \eqref{Tmf-hat}. 
Let us consider the composition 
\begin{equation}
\label{hcP-Tmf-hat}
\hcP_{\N} \circ T_{\hat{m}, \hat{f}} \big|_{\B_3} : \B_3 \to \B_3/\N.
\end{equation}

Using the fact that $\B_3$ is dense in $\wh{\B}_3$, one can easily prove that 
the homomorphism \eqref{hcP-Tmf-hat} is surjective. In the following proposition, 
we use \eqref{hcP-Tmf-hat} to define a right action of $\GTh_{gen}$ on 
$\NFI_{\PB_3}(\B_3)$:
\begin{prop}  
\label{prop:action}
Let $\N \in \NFI_{\PB_3}(\B_3)$. For every $(\hat{m}, \hat{f}) \in \GTh_{gen}$,
the pair
$$
\big( \hcP_{N_{\ord}}(\hat{m}), \hcP_{\N_{\F_2}}(\hat{f}) \big)
$$
is a $\GT$-shadow with the target $\N$. Furthermore, the assignment 
\begin{equation}
\label{action-NFI}
\N^{(\hat{m}, \hat{f})} := \ker\big( \hcP_{\N} \circ T_{\hat{m}, \hat{f}} \big|_{\B_3} \big)
\end{equation}
defines a right action of $\GTh_{gen}$ on $\NFI_{\PB_3}(\B_3)$.
\end{prop}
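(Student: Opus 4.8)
The plan is to verify both claims by reducing the profinite data modulo $\N$. Fix $(\hat m,\hat f)\in\GTh_{gen}$ and $\N\in\NFI_{\PB_3}(\B_3)$. First I would choose an integer $m$ with $m+N_{\ord}\ZZ=\hcP_{N_{\ord}}(\hat m)$ and an element $f\in[\F_2,\F_2]$ with $f\,\N_{\F_2}=\hcP_{\N_{\F_2}}(\hat f)$; such an $f$ exists because $\hat f\in[\wh\F_2,\wh\F_2]^{top.\,cl.}$ and $\hcP_{\N_{\F_2}}$ is a continuous homomorphism onto the finite group $\F_2/\N_{\F_2}$, so $\hcP_{\N_{\F_2}}(\hat f)$ already lies in $[\F_2/\N_{\F_2},\F_2/\N_{\F_2}]$. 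Next I would apply $\hcP_\N$ to the profinite hexagon relations \eqref{hexa1-hat} and \eqref{hexa11-hat}. Here the bookkeeping is that $\ord(x_{12}\N)$, $\ord(x_{23}\N)$ and $\ord(c\N)$ all divide $N_{\ord}$, while $\si_1^{2N_{\ord}}=x_{12}^{N_{\ord}}\in\N$ and $\si_2^{2N_{\ord}}=x_{23}^{N_{\ord}}\in\N$ force $\ord(\si_1\N)$ and $\ord(\si_2\N)$ to divide $2N_{\ord}$; since $\hat m\equiv m\pmod{N_{\ord}}$, this gives $\hcP_\N(\si_1^{2\hat m+1})=\si_1^{2m+1}\N$, $\hcP_\N(x_{12}^{\hat m})=x_{12}^{m}\N$, $\hcP_\N(c^{\hat m})=c^{m}\N$, together with $\hcP_\N(\hat f)=f\N$ (because $\hcP_\N$ restricted to $\wh\F_2$ factors through $\hcP_{\N_{\F_2}}$). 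Thus the images of \eqref{hexa1-hat} and \eqref{hexa11-hat} under $\hcP_\N$ are exactly the hexagon relations \eqref{hexa1} and \eqref{hexa11} for $(m,f)$ modulo $\N$, so $T_{m,f}:\B_3\to\B_3/\N$ is defined by Proposition \ref{prop:T-m-f}, and comparing values on $\si_1,\si_2$ shows $T_{m,f}=\hcP_\N\circ T_{\hat m,\hat f}\big|_{\B_3}$.

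To conclude that $(m,f)$ represents a $\GT$-shadow with target $\N$ in the sense of Definition \ref{dfn:GT-shadows}, it remains to check two more conditions. The coset $f\,\N_{\F_2}$ lies in $[\F_2/\N_{\F_2},\F_2/\N_{\F_2}]$ by the choice $f\in[\F_2,\F_2]$. The residue $2m+1$ is a unit in $\ZZ/N_{\ord}\ZZ$ because $2\hat m+1=\chi_{vir}(\hat m,\hat f)\in\Zhat^{\times}$ (Remark \ref{rem:virtual-cyclotomic}) and the reduction $\Zhat\to\ZZ/N_{\ord}\ZZ$ sends units to units. Finally, $T_{m,f}=\hcP_\N\circ T_{\hat m,\hat f}\big|_{\B_3}$ is onto: since $(\hat m,\hat f)$ lies in the group $\GTh_{gen}$, $T_{\hat m,\hat f}$ is a continuous automorphism of $\wh\B_3$ (Remark \ref{rem:virtual-cyclotomic}), so $T_{\hat m,\hat f}(\B_3)$ is dense in $\wh\B_3$ and its image under $\hcP_\N$ is dense in, hence equal to, the finite group $\B_3/\N$. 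Therefore $(m,f)$ represents a $\GT$-shadow with target $\N$; by construction this $\GT$-shadow is $\big(\hcP_{N_{\ord}}(\hat m),\hcP_{\N_{\F_2}}(\hat f)\big)$, independently of the chosen representatives.

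For the action statement, $\N^{(\hat m,\hat f)}=\ker(T_{m,f})$ belongs to $\NFI_{\PB_3}(\B_3)$ by \eqref{ker-Tmf-ker-Tmf-PB3}. The identity $(0,1_{\wh\F_2})$ acts trivially since $T_{0,1_{\wh\F_2}}=\id_{\wh\B_3}$ by \eqref{Tmf-hat-identity}, so $\hcP_\N\circ T_{0,1_{\wh\F_2}}\big|_{\B_3}=\cP_\N$ has kernel $\N$. For compatibility with $\bullet$, set $g_i:=(\hat m_i,\hat f_i)$ and $g:=g_1\bullet g_2$, and recall $T_{g_1}\circ T_{g_2}=T_g$ from \eqref{Tmf-hat-functor}. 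The key input is the standard profinite fact (available from Appendix \ref{app:profinite}) that for $\K\in\NFI(\B_3)$ the kernel of $\hcP_\K:\wh\B_3\to\B_3/\K$ is the closure of $\K$ in $\wh\B_3$. Applying this with $\K=\N^{g_1}$ and noting that the closure of $\N^{g_1}$ in $\wh\B_3$ is also the kernel of the continuous surjection $\hcP_\N\circ T_{g_1}:\wh\B_3\to\B_3/\N$ (whose restriction to $\B_3$ has kernel $\N^{g_1}$ by definition), we obtain an isomorphism $\alpha:\B_3/\N^{g_1}\iso\B_3/\N$ with $\hcP_\N\circ T_{g_1}=\alpha\circ\hcP_{\N^{g_1}}$. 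Then
$$
\hcP_\N\circ T_{g}\big|_{\B_3}=\hcP_\N\circ T_{g_1}\circ T_{g_2}\big|_{\B_3}=\alpha\circ\hcP_{\N^{g_1}}\circ T_{g_2}\big|_{\B_3},
$$
so the kernels agree: $\N^{g}=\ker(\hcP_{\N^{g_1}}\circ T_{g_2}|_{\B_3})=(\N^{g_1})^{g_2}$. This is precisely the right-action identity $(\N^{g_1})^{g_2}=\N^{g_1\bullet g_2}$.

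I expect the main obstacle to be the reduction of the hexagon relations in the first paragraph: checking carefully that $\hcP_\N$ sends each of $\si_i^{2\hat m+1}$, $x_{12}^{\hat m}$, $c^{\hat m}$ and $\hat f$ to the intended elements requires keeping precise track of which quantities have order dividing $N_{\ord}$ and which have order dividing $2N_{\ord}$. The profinite inputs in the action argument — density of $T_g(\B_3)$ in $\wh\B_3$ and the identification of $\ker\hcP_\K$ with the closure of $\K$ — are routine and should already be recorded in Appendix \ref{app:profinite}.
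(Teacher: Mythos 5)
Your proof is correct and follows essentially the same route as the paper: reduce the profinite hexagon relations modulo $\N$, check the unit and commutator conditions, identify $T_{m,f}$ with $\hcP_{\N}\circ T_{\hat{m},\hat{f}}\big|_{\B_3}$, and then chase the resulting commutative squares together with \eqref{Tmf-hat-functor} to get $(\N^{g_1})^{g_2}=\N^{g_1\bullet g_2}$. The only cosmetic difference is that you obtain the factorization $\hcP_{\N}\circ T_{g_1}=\alpha\circ\hcP_{\N^{g_1}}$ by identifying both kernels with the closure of $\N^{g_1}$ in $\wh{\B}_3$, whereas the paper defines $T^{\isom}_{m,f}$ from the surjectivity of $T_{m,f}$ and verifies commutativity by density of $\B_3$ plus Hausdorffness — the same underlying idea.
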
  
\begin{proof}
Let $m \in \ZZ$ (resp. $f \in \F_2$) be any representative of the residue class 
$\hcP_{N_{\ord}}(\hat{m}) \in  \ZZ / N_{\ord} \ZZ$ 
(resp. of the coset $\hcP_{\N_{\F_2}}(\hat{f}) \in \F_2/\N_{\F_2}$). 

Since the pair $(\hat{m}, \hat{f})$ satisfies \eqref{hexa1-hat} and \eqref{hexa11-hat}, 
the pair $(m,f)$ satisfies hexagon relations \eqref{hexa1} and \eqref{hexa11} modulo $\N$. 

Since $2\hat{m}+1$ is a unit in $\Zhat$, the integer $2m+1$ represents a unit in $\ZZ/N_{\ord}\ZZ$.

The property $\hat{f} \in  [\wh{\F}_2, \wh{\F}_2]^{top.\,cl.}$ implies that 
$$
f \N_{\F_2} ~\in~ [\F_2/ \N_{\F_2}, \F_2/ \N_{\F_2}].
$$

Finally, it is easy to see that the homomorphism $T_{m,f} : \B_3 \to \B_3/\N$
coincides with $\hcP_{\N} \circ T_{\hat{m}, \hat{f}}$:
\begin{equation}
\label{Tmf-Tmf-hat}
T_{m,f} = \hcP_{\N} \circ T_{\hat{m}, \hat{f}} \big|_{\B_3} \,.
\end{equation}
In particular, $T_{m,f}$ is surjective. 

We proved that the pair $(m + N_{\ord}\ZZ, f \N_{\F_2})$ is a $\GT$-shadow with the target $\N$
and 
$$
\N^{(\hat{m}, \hat{f})} = \ker(T_{m,f}). 
$$ 
Hence $\N^{(\hat{m}, \hat{f})}  \in \NFI_{\PB_3}(\B_3)$.

We say that the $\GT$-shadow $[m,f] \in \GT(\N)$ \e{comes from} the element 
$(\hat{m}, \hat{f}) \in \GTh_{gen}$.

Let us consider the following diagram:
\begin{equation}
\label{diag-Tmf-hat-Tmf}
\begin{tikzpicture}
\matrix (m) [matrix of math nodes, row sep=2.6em, column sep=2.6em]
{~ & \wh{\B}_3    &  \wh{\B}_3 \\
 \B_3 & \B_3/\K & \B_3/\N \\};
\path[->, font=\scriptsize]
(m-1-2) edge node[above] {$~~T_{\hat{m}, \hat{f}}$} (m-1-3)
edge node[right] {$\hcP_{\K}$} (m-2-2)
 (m-1-3) edge node[right] {$\hcP_{\N}$} (m-2-3)
(m-2-2) edge node[above] {$T_{m,f}^{\isom}$} (m-2-3)
(m-2-1) edge node[above] {$\cP_{\K}$} (m-2-2)
edge (m-1-2)
(m-2-1) edge[bend right = 35]  node[below] {$T_{m, f}$} (m-2-3) ;
\end{tikzpicture}
\end{equation}
where $\K: = \ker(T_{m,f})$ and the slanted straight arrow is the standard inclusion map 
$j: \B_3 \to \wh{\B}_3$. 

We claim that the diagram in \eqref{diag-Tmf-hat-Tmf} commutes.
Indeed, the outer ``curved'' rectangle commutes due to \eqref{Tmf-Tmf-hat}. The 
lower ``curved'' triangle commutes due to the identity $T_{m,f} = T^{\isom}_{m,f} \circ \cP_{\K}$.
The left triangle commutes 
by definition of $\wh{\B}_3$. Finally, the continuous maps $\hcP_{\N} \circ T_{\hat{m}, \hat{f}}$
and $T^{\isom}_{m,f} \circ \hcP_{K}$ agree on the dense subset $\B_3 \subset \wh{\B}_3$
and $\B_3/\N$ is Hausdorff. Thus the inner square in \eqref{diag-Tmf-hat-Tmf} also commutes.

It is clear that 
$$
\hcP_{\N} \circ T_{0, 1_{\wh{\F}_2}}  \big|_{\B_3} = \cP_{\N}.  
$$
Hence $\N^{( 0, 1_{\wh{\F}_2} )} = \N$. 

It remains to prove that, for all $(\hat{m}_1, \hat{f}_1),  (\hat{m}_2, \hat{f}_2) \in \GTh_{gen}$, 
\begin{equation}
\label{indeed-the-action}
\big( \N^{(\hat{m}_1, \hat{f}_1)} \big)^{(\hat{m}_2, \hat{f}_2)} = \N^{(\hat{m}, \hat{f})}\,,
\end{equation}
where $(\hat{m}, \hat{f}) := (\hat{m}_1, \hat{f}_1) \bullet (\hat{m}_2, \hat{f}_2)$. 

For this purpose, we will use the inner square of the diagram in \eqref{diag-Tmf-hat-Tmf}. 
We set $\K:= \N^{(\hat{m}_1, \hat{f}_1)} $ and $\H:= \K^{(\hat{m}_2, \hat{f}_2)}$. Then, putting together 
the ``squares'' corresponding to $(\hat{m}_1, \hat{f}_1)$ and $(\hat{m}_2, \hat{f}_2)$, 
adding the obvious ``triangle with the vertex'' $\B_3$, the 
``curved arrow'' $\hcP_{\N} \circ T_{\hat{m}, \hat{f}}\big|_{\B_3}$, and
using \eqref{Tmf-hat-functor}, we get the following commutative diagram:
\begin{equation}
\label{diag-action}
\begin{tikzpicture}
\matrix (m) [matrix of math nodes, row sep=2.6em, column sep=2.6em]
{~ & \wh{\B}_3    &  \wh{\B}_3 & \wh{\B}_3\\
 \B_3 & \B_3/\H & \B_3/\K & \B_3/\N \\};
\path[->, font=\scriptsize]
(m-1-2) edge node[above] {$~~T_{\hat{m}_2, \hat{f}_2}$} (m-1-3)
edge node[right] {$\hcP_{\H}$} (m-2-2)
(m-1-3) edge node[above] {$~~T_{\hat{m}_1, \hat{f}_1}$} (m-1-4)
edge node[right] {$\hcP_{\K}$} (m-2-3)
(m-1-4) edge node[right] {$\hcP_{\N}$} (m-2-4)
(m-2-2) edge node[above] {$T_{m_2, f_2}^{\isom}$} (m-2-3)
(m-2-3) edge node[above] {$T_{m_1, f_1}^{\isom}$} (m-2-4)
(m-2-1) edge node[above] {$\cP_{\H}$} (m-2-2) edge (m-1-2)
(m-2-1) edge[bend right = 35]  node[above] {$\hcP_{\N} \circ T_{\hat{m}, \hat{f}}\big|_{\B_3}$} (m-2-4) ;
\end{tikzpicture}
\end{equation}
where $[m_1,f_1] \in \GT(\N)$ and $[m_2,f_2] \in \GT(\K)$ are the $\GT$-shadows coming 
from $(\hat{m}_1,\hat{f}_1)$ and  $(\hat{m}_2,\hat{f}_2)$, respectively.

The commutativity of the lower ``curved rectangle'' in \eqref{diag-action}
implies that $\H = \N^{(\hat{m}, \hat{f})}$. Thus identity
\eqref{indeed-the-action} holds.
\end{proof}

\bigskip

For $\N \in \NFI_{\PB_3}(\B_3)$ and $(\hat{m}, \hat{f}) \in \GTh_{gen}$, 
we denote by $\PR_{\N}(\hat{m}, \hat{f})$ the $\GT$-shadow with the target $\N$
that comes from $(\hat{m}, \hat{f})$, i.e. 
$$
\PR_{\N}(\hat{m}, \hat{f}) := 
\big( \hcP_{N_{\ord}}(\hat{m}), \hcP_{\N_{\F_2}}(\hat{f}) \big). 
$$
In view of Corollary \ref{cor:genuine-iff} which is proved later, $\PR_{\N}(\hat{m}, \hat{f})$ is 
called the approximation of the element $(\hat{m}, \hat{f}) \in \GTh_{gen}$.

We denote by $\GTh^{gen}_{\NFI}$ the transformation groupoid of the action of 
$\GTh_{gen}$ on $\NFI_{\PB_3}(\B_3)$, i.e. $\Ob(\GTh^{gen}_{\NFI}) = \NFI_{\PB_3}(\B_3)$ 
and 
$$ 
\GTh^{gen}_{\NFI}(\K,\N) := \{(\hat{m}, \hat{f}) \in \GTh_{gen} ~|~ \N^{(\hat{m}, \hat{f})} = \K\}. 
$$

\begin{defi}  
\label{dfn:genuine}
Let $\N \in \NFI_{\PB_3}(\B_3)$ and $[m,f] \in \GT(\N)$. We say that the $\GT$-shadow 
$[m,f]$ is \e{genuine} if there exists $(\hat{m}, \hat{f}) \in \GTh_{gen}$ such that 
$[m,f]$ comes from $(\hat{m}, \hat{f})$, i.e. 
$$
m + N_{\ord}\ZZ = \hcP_{N_{\ord}}(\hat{m}), \qquad 
f \N_{\F_2} = \hcP_{\N_{\F_2}}(\hat{f}).
$$
Otherwise, the $\GT$-shadow is called \e{fake}.
\end{defi}  

Let $\N \in \NFI_{\PB_3}(\B_3)$. 
Due to Proposition \ref{prop:hat-N-kernel}, the subgroup 
$\hcP^{-1}_{\N}(1_{\B_3 / \N}) \le \wh{\B}_3$ 
(resp. $\hcP^{-1}_{\N_{\F_2}}(1_{\F_2 / \N_{\F_2}}) \le \wh{\F}_2$)
coincides with the profinite completion of $\N$ (resp. with the profinite completion 
of $\N_{\F_2}$). By abuse of 
notation, we identify $\hcP^{-1}_{\N}(1_{\B_3 / \N})$ (resp.
$\hcP^{-1}_{\N_{\F_2}}(1_{\F_2 / \N_{\F_2}})$) with $\wh{\N}$
(resp. with $\wh{\N}_{\F_2}$). 
We will need the following statement:
\begin{prop}  
\label{prop:KF2-hat-NF2-hat}
Let $\N \in  \NFI_{\PB_3}(\B_3)$ and $(\hat{m}, \hat{f}) \in \GTh_{gen}$. 
If $\K$ is the source of the $\GT$-shadow $\PR_{\N}(\hat{m}, \hat{f})$, then 
\begin{equation}
\label{Tmf-hat-N-hat}
T_{\hat{m}, \hat{f}} \big( \wh{\K} \big) = \wh{\N}
\end{equation}
and
\begin{equation}
\label{Emf-hat-NF2-hat}
E_{\hat{m}, \hat{f}} \big( \wh{\K}_{\F_2} \big) = \wh{\N}_{\F_2}.
\end{equation}
\end{prop}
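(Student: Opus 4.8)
The plan is to deduce both identities from the commutative square in diagram \eqref{diag-Tmf-hat-Tmf} together with the fact, recorded in Remark \ref{rem:virtual-cyclotomic}, that for $(\hat{m}, \hat{f}) \in \GTh_{gen}$ the continuous endomorphism $T_{\hat{m}, \hat{f}}$ is in fact a continuous automorphism of $\wh{\B}_3$ and $E_{\hat{m}, \hat{f}}$ a continuous automorphism of $\wh{\F}_2$.

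First I would unwind the hypothesis: writing $[m,f] := \PR_{\N}(\hat{m}, \hat{f})$, we have $\K = \ker(T_{m,f})$ with $T_{m,f} = \hcP_{\N} \circ T_{\hat{m}, \hat{f}}\big|_{\B_3}$ (cf. \eqref{Tmf-Tmf-hat} and the proof of Proposition \ref{prop:action}); under the identifications of Proposition \ref{prop:hat-N-kernel} this means $\wh{\K} = \ker(\hcP_{\K})$ and $\wh{\N} = \ker(\hcP_{\N})$ as subgroups of $\wh{\B}_3$. The inner square of \eqref{diag-Tmf-hat-Tmf} is the identity $\hcP_{\N} \circ T_{\hat{m}, \hat{f}} = T^{\isom}_{m,f} \circ \hcP_{\K}$ of continuous maps $\wh{\B}_3 \to \B_3/\N$. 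Reading this off on an element $w \in \wh{\B}_3$: one has $T_{\hat{m}, \hat{f}}(w) \in \wh{\N}$ iff $T^{\isom}_{m,f}(\hcP_{\K}(w)) = 1$ iff $\hcP_{\K}(w) = 1$ (because $T^{\isom}_{m,f}$ is an isomorphism) iff $w \in \wh{\K}$; hence $T_{\hat{m}, \hat{f}}^{-1}(\wh{\N}) = \wh{\K}$, and since $T_{\hat{m}, \hat{f}}$ is surjective this gives \eqref{Tmf-hat-N-hat}.

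For \eqref{Emf-hat-NF2-hat} I would first identify $\wh{\N}_{\F_2}$ with $\wh{\N} \cap \wh{\F}_2$, and $\wh{\K}_{\F_2}$ with $\wh{\K} \cap \wh{\F}_2$, inside $\wh{\B}_3$. Since $\N_{\F_2} = \N \cap \F_2$, the canonical map $\F_2/\N_{\F_2} \to \B_3/\N$ is injective, and the continuous homomorphisms $\hcP_{\N}\big|_{\wh{\F}_2}$ and the composite of $\hcP_{\N_{\F_2}}$ with that injection agree on the dense subgroup $\F_2 \le \wh{\F}_2$, hence everywhere since $\B_3/\N$ is Hausdorff; therefore $\ker\big(\hcP_{\N}\big|_{\wh{\F}_2}\big) = \ker(\hcP_{\N_{\F_2}}) = \wh{\N}_{\F_2}$, and likewise for $\K$. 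Now using $E_{\hat{m}, \hat{f}} = T_{\hat{m}, \hat{f}}\big|_{\wh{\F}_2}$ (equation \eqref{Tmf-hat-F2}), the fact that $E_{\hat{m}, \hat{f}}$ is onto $\wh{\F}_2$ (so $T_{\hat{m}, \hat{f}}(\wh{\F}_2) = \wh{\F}_2$), and the injectivity of $T_{\hat{m}, \hat{f}}$ on $\wh{\B}_3$, I obtain
$$ E_{\hat{m}, \hat{f}}\big(\wh{\K}_{\F_2}\big) = T_{\hat{m}, \hat{f}}\big(\wh{\K} \cap \wh{\F}_2\big) = T_{\hat{m}, \hat{f}}(\wh{\K}) \cap T_{\hat{m}, \hat{f}}(\wh{\F}_2) = \wh{\N} \cap \wh{\F}_2 = \wh{\N}_{\F_2}, $$
where the third equality invokes \eqref{Tmf-hat-N-hat}.

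The genuinely non-formal point — and the step I expect to need the most care — is the identification $\wh{\N}_{\F_2} = \wh{\N} \cap \wh{\F}_2$ inside $\wh{\B}_3$, that is, the compatibility of the abuse-of-notation identifications of $\hcP^{-1}_{\bullet}(1)$ with profinite completions (Proposition \ref{prop:hat-N-kernel}) with the embedding $\wh{\F}_2 \hookrightarrow \wh{\B}_3$; the Hausdorff/density argument above is what settles it. Everything else is a diagram chase exploiting that $T_{\hat{m}, \hat{f}}$, $E_{\hat{m}, \hat{f}}$ and $T^{\isom}_{m,f}$ are isomorphisms.
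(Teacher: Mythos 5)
Your proof is correct, and it departs from the paper's in a couple of noteworthy ways. For \eqref{Tmf-hat-N-hat} the paper establishes only one inclusion ($T_{\hat{m},\hat{f}}(\wh{\K}) \subset \wh{\N}$) from the commuting square in \eqref{diag-Tmf-hat-Tmf} and then upgrades it to an equality by a chain of index computations combining Propositions \ref{prop:hat-N-kernel} and \ref{prop:K-ord-N-ord}; you instead read the commuting square as a two-sided equivalence (using the \emph{injectivity} of $T^{\isom}_{m,f}$) to get $T_{\hat{m},\hat{f}}^{-1}(\wh{\N}) = \wh{\K}$ outright and then apply surjectivity of $T_{\hat{m},\hat{f}}$. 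Both versions lean on the fact that $T_{\hat{m},\hat{f}}$ is an automorphism (Remark \ref{rem:virtual-cyclotomic}), but yours avoids the index bookkeeping. The sharper difference is in \eqref{Emf-hat-NF2-hat}: the paper just cites a parallel commutative diagram \eqref{diag-Tmf-hat-Tmf-F2} and says the argument is ``similar,'' whereas you \emph{derive} the $\F_2$-level equality from the already-proved $\B_3$-level one by first pinning down the identification $\wh{\N}_{\F_2} = \wh{\N} \cap \wh{\F}_2$ inside $\wh{\B}_3$ (via the density/Hausdorff argument and injectivity of $\F_2/\N_{\F_2} \hookrightarrow \B_3/\N$, which uses $\N_{\F_2} = \N \cap \F_2$), and then pushing through $T_{\hat{m},\hat{f}}$ using its injectivity and $T_{\hat{m},\hat{f}}(\wh{\F}_2)=\wh{\F}_2$ (from \eqref{Tmf-hat-F2} and invertibility of $E_{\hat{m},\hat{f}}$). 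This is a genuine reduction of the second identity to the first, and it isolates exactly the one non-formal compatibility that you correctly flag as the crux; the paper's route repeats the whole argument one level down instead. Both are clean; yours is slightly more economical, the paper's is more self-contained per identity.
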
  
\begin{proof}
Let $(m,f) \in \ZZ \times \F_2$ be a pair that represents the $\GT$-shadow
$\PR_{\N}(\hat{m}, \hat{f})$ and $\hat{w} \in \wh{\K} = \ker(\wh{\B}_3 \overset{\hcP_{\K}}{\to} \B_3/\K)$. 
Since the diagram in \eqref{diag-Tmf-hat-Tmf} commutes, 
$$
\hcP_{\N} \circ T_{\hat{m}, \hat{f}}(\hat{w}) = 1_{\B_3/\N}. 
$$
Hence $T_{\hat{m}, \hat{f}} (\wh{\K}) \subset \wh{\N} =  \ker(\wh{\B}_3 \overset{\hcP_{\N}}{\tto} \B_3/\N)$.

Since 
$|\wh{\B}_3 : \wh{\N}| = |\B_3 : \N | = |\B_3 : \K|  = |\wh{\B}_3 : \wh{\K}|=
|\wh{\B}_3 : T_{\hat{m}, \hat{f}} (\wh{\K})|$, 
the inclusion $T_{\hat{m}, \hat{f}} (\wh{\K}) \subset \wh{\N}$ implies that 
$T_{\hat{m}, \hat{f}} (\wh{\K}) = \wh{\N}$. 

Identity \eqref{Emf-hat-NF2-hat} can be proved in a similar way using 
the commutative diagram
\begin{equation}
\label{diag-Tmf-hat-Tmf-F2}
\begin{tikzpicture}
\matrix (m) [matrix of math nodes, row sep=2.6em, column sep=2.6em]
{~ & \wh{\F}_2    &  \wh{\F}_2 \\
 \F_2 & \F_2/\K_{\F_2} & \F_2/\N_{\F_2} \\};
\path[->, font=\scriptsize]
(m-1-2) edge node[above] {$~~T_{\hat{m}, \hat{f}}$} (m-1-3)
edge node[right] {$\hcP_{\K_{\F_2}}$} (m-2-2)
 (m-1-3) edge node[right] {$\hcP_{\N_{\F_2}}$} (m-2-3)
(m-2-2) edge node[above] {$T_{m,f}^{\F_2, \isom}$} (m-2-3)
(m-2-1) edge node[above] {$~~\cP_{\K_{\F_2}}$} (m-2-2)
edge node[left] {$j$~~} (m-1-2)
(m-2-1) edge[bend right = 35]  node[below] {$T^{\F_2}_{m, f}$} (m-2-3) ;
\end{tikzpicture}
\end{equation}
where $T_{m, f}^{\F_2, \isom}$ is the isomorphism 
$\F_2/\K_{\F_2} \iso \F_2/\N_{\F_2}$ defined in \eqref{Tmf-F2-isom-dfn}.
\end{proof}

The following theorem gives us a link between $\GTh_{gen}$ and the groupoid $\GTSh$: 
\begin{thm}  
\label{thm:GThgen-GTSh}
Let $\N \in  \NFI_{\PB_3}(\B_3)$.
The assignments 
\begin{equation}
\label{PR-dfn}
\PR(\N):=\N, \qquad
\PR_{\N}(\hat{m}, \hat{f}) = \big( \hcP_{N_{\ord}}(\hat{m}),  \hcP_{\N_{\F_2}}(\hat{f}) \big)
\end{equation}
define a functor from the transformation groupoid $\GTh^{gen}_{\NFI}$ to $\GTSh$. 
\end{thm}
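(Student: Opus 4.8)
The plan is to verify the three defining properties of a functor: that $\PR$ sends a morphism of $\GTh^{gen}_{\NFI}$ to a morphism of $\GTSh$ with matching source and target, that it preserves identity morphisms, and that it preserves composition. The first two properties are essentially repackagings of Proposition~\ref{prop:action} and the calculations inside its proof, so the only substantive point is compatibility with composition.

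For well-definedness on morphisms, recall that a morphism in $\GTh^{gen}_{\NFI}(\K, \N)$ is an element $(\hat m, \hat f) \in \GTh_{gen}$ with $\N^{(\hat m, \hat f)} = \K$. By Proposition~\ref{prop:action}, $\PR_\N(\hat m, \hat f) = \big(\hcP_{N_{\ord}}(\hat m), \hcP_{\N_{\F_2}}(\hat f)\big)$ is a $\GT$-shadow with target $\N$, and if $(m,f) \in \ZZ \times \F_2$ is any representing pair then $T_{m,f} = \hcP_\N \circ T_{\hat m, \hat f}\big|_{\B_3}$, so $\ker(T_{m,f}) = \N^{(\hat m, \hat f)} = \K$; hence $\PR_\N(\hat m, \hat f) \in \GTSh(\K, \N)$, and source and target are preserved. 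For identity morphisms, the proof of Proposition~\ref{prop:action} already records $\N^{(0, 1_{\wh\F_2})} = \N$ and $\hcP_\N \circ T_{0, 1_{\wh\F_2}}\big|_{\B_3} = \cP_\N$, which forces $\hcP_{N_{\ord}}(0) = 0$ and $\hcP_{\N_{\F_2}}(1_{\wh\F_2}) = 1_{\F_2/\N_{\F_2}}$, i.e. $\PR_\N(0, 1_{\wh\F_2}) = [0, 1_{\F_2}]$, which is the identity of $\GTSh(\N, \N)$ by Theorem~\ref{thm:GTSh}.

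For composition I would take $(\hat m_1, \hat f_1) \in \GTh^{gen}_{\NFI}(\N^{(2)}, \N^{(1)})$ and $(\hat m_2, \hat f_2) \in \GTh^{gen}_{\NFI}(\N^{(3)}, \N^{(2)})$; by \eqref{indeed-the-action} their composite in the transformation groupoid is $(\hat m, \hat f) := (\hat m_1, \hat f_1) \bullet (\hat m_2, \hat f_2)$, which lies in $\GTh^{gen}_{\NFI}(\N^{(3)}, \N^{(1)})$. Writing $[m_i, f_i] := \PR_{\N^{(i)}}(\hat m_i, \hat f_i)$ with representing pairs $(m_i, f_i)$, and noting $N^{(1)}_{\ord} = N^{(2)}_{\ord} = N^{(3)}_{\ord}$ by Remark~\ref{rem:same-indices}, I must check that $\PR_{\N^{(1)}}(\hat m, \hat f)$ equals $[m_1, f_1] \circ [m_2, f_2] = [\,2m_1 m_2 + m_1 + m_2,\; f_1 E_{m_1, f_1}(f_2)\,]$ (formula \eqref{GTSh-composition}). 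The $\ZZ$-component is immediate, since $\hcP_{N^{(1)}_{\ord}} : \Zhat \to \ZZ/N^{(1)}_{\ord}\ZZ$ is a ring homomorphism sending $\hat m_i$ to $m_i$. For the $\F_2$-component, expand $\hcP_{\N^{(1)}_{\F_2}}\big(\hat f_1 E_{\hat m_1, \hat f_1}(\hat f_2)\big) = (f_1 \N^{(1)}_{\F_2}) \cdot \hcP_{\N^{(1)}_{\F_2}}\big(E_{\hat m_1, \hat f_1}(\hat f_2)\big)$. Using $E_{\hat m_1, \hat f_1} = T_{\hat m_1, \hat f_1}\big|_{\wh\F_2}$ (see \eqref{Tmf-hat-F2}), the commutativity of diagram~\eqref{diag-Tmf-hat-Tmf-F2} (applied with $\N = \N^{(1)}$, $\K = \ker(T_{m_1,f_1}) = \N^{(2)}$, and noting $\N^{(2)}_{\F_2} = \ker(T^{\F_2}_{m_1,f_1})$ by \eqref{ker-Tmf-F2-K-F2}), together with definitions \eqref{Tmf-F2-isom-dfn} and \eqref{E-T-F2}, gives
\[
\hcP_{\N^{(1)}_{\F_2}}\big(E_{\hat m_1, \hat f_1}(\hat f_2)\big) = T^{\F_2, \isom}_{m_1, f_1}\big(f_2 \N^{(2)}_{\F_2}\big) = T^{\F_2}_{m_1, f_1}(f_2) = E_{m_1, f_1}(f_2)\,\N^{(1)}_{\F_2},
\]
so the $\F_2$-component is $f_1 E_{m_1, f_1}(f_2)\,\N^{(1)}_{\F_2}$, exactly as needed.

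I do not expect a genuine obstacle: the substantive inputs — that $\PR_\N$ of an element of $\GTh_{gen}$ is a $\GT$-shadow, and the commutative squares relating $T_{\hat m, \hat f}$ to $T_{m,f}$ and $T_{m,f}^{\F_2,\isom}$ — are all already established in Proposition~\ref{prop:action} and its proof. The one point requiring care is purely bookkeeping: keeping straight which object plays the role of source versus target, and observing via \eqref{ker-Tmf-F2-K-F2} that the intermediate quotient $\F_2/\N^{(2)}_{\F_2}$ is precisely the codomain of $\hcP_{\N^{(2)}_{\F_2}}$ so that the middle square of \eqref{diag-Tmf-hat-Tmf-F2} can be invoked. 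Alternatively, one could bypass even this by reading the composition law off directly from the large commutative diagram~\eqref{diag-action}, restricted to $\wh\F_2$ via \eqref{Tmf-hat-F2}.
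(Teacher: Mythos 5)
Your proposal is correct and follows essentially the same route as the paper: reduce everything to Proposition \ref{prop:action} except for the compatibility with composition, where the only nontrivial point is the identity $\hcP_{\N^{(1)}_{\F_2}}\big(E_{\hat{m}_1,\hat{f}_1}(\hat{f}_2)\big)=E_{m_1,f_1}(f_2)\,\N^{(1)}_{\F_2}$. The sole (cosmetic) difference is that you obtain this identity by applying the commutative square \eqref{diag-Tmf-hat-Tmf-F2} directly to $\hat{f}_2$, whereas the paper writes $\hat{f}_2=f_2\hat{b}$ with $\hat{b}\in\wh{\K}_{\F_2}$ and combines \eqref{Emf-hat-NF2-hat} with diagram \eqref{E-hat-versus-E}; both arguments rest on Proposition \ref{prop:KF2-hat-NF2-hat} and carry the same content.
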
  
\begin{proof} Let $(m,f) \in \ZZ \times [\F_2, \F_2]$ be a pair that represents 
$\big( \hcP_{N_{\ord}}(\hat{m}),  \hcP_{\N_{\F_2}}(\hat{f}) \big)$.

Due to the first statement of Proposition \ref{prop:action}, $[m,f]$ is a 
$\GT$-shadow with the target $\N$. Moreover, since 
$$
\ker(T_{m,f}) = \N^{(\hat{m}, \hat{f})}, 
$$ 
$[m,f]$ is indeed a morphism from $\N^{(\hat{m}, \hat{f})}$ to $\N$ in $\GTSh$. 

It is clear that $\PR_{\N}(0, 1_{\wh{\F}_2}) = [0, 1_{\F_2}]$ for every 
$\N \in  \NFI_{\PB_3}(\B_3)$, i.e. the functor $\PR$
sends the identity morphisms of $\GTh^{gen}_{\NFI}$ to the identity morphisms 
of $\GTSh$. 

It remains to prove that, for all $(\hat{m}_1, \hat{f}_1),  (\hat{m}_2, \hat{f}_2) \in \GTh_{gen}$ 
and $\N \in \NFI_{\PB_3}(\B_3)$,
\begin{equation}
\label{PR-composition}
\PR_{\N}(\hat{m}_1, \hat{f}_1) \bullet \PR_{\K}(\hat{m}_2, \hat{f}_2) =
\PR_{\N}(\hat{m}, \hat{f}), 
\end{equation}
where $(\hat{m}, \hat{f}) = (\hat{m}_1, \hat{f}_1) \bullet  (\hat{m}_2, \hat{f}_2)$, 
$(\hat{m}_1, \hat{f}_1)$ is viewed as a morphism from $\K:= \N^{(\hat{m}_1, \hat{f}_1)}$ to $\N$
and $(\hat{m}_2, \hat{f}_2)$ is viewed as a morphism from $\K^{(\hat{m}_2, \hat{f}_2)}$ to $\K$. 

Let $(m_1, f_1)$ and $(m_2, f_2)$ be pairs that represent the $\GT$-shadows 
$\PR_{\N}(\hat{m}_1, \hat{f}_1)$ and $\PR_{\K}(\hat{m}_2, \hat{f}_2)$, respectively. 
Since the source of $[m_1, f_1]$, $\K$,  coincides with the target of $[m_2, f_2]$, 
the $\GT$-shadows $[m_1, f_1]$ and $[m_2, f_2]$ can be composed in 
this order $[m_1, f_1] \bullet [m_2, f_2]$ and $[m_1, f_1] \bullet [m_2, f_2]$
is an element of $\GTSh(\H, \N)$, where 
$\H := \K^{(\hat{m}_2, \hat{f}_2)}$.
Recall that $N_{\ord} = K_{\ord} = H_{\ord}$\,.

We need to prove that 
\begin{equation}
\label{m-bar-versus-m-hat}
m + N_{\ord} \ZZ = \hcP_{N_{\ord}} (\hat{m})
\end{equation}
and 
\begin{equation}
\label{fN-F2-versus-f-hat}
f \N_{\F_2} = \hcP_{\N_{\F_2}} (\hat{f}),
\end{equation}
where
$$
m : = 2m_1m_2 + m_1 +m_2, \qquad 
f := f_1 E_{m_1, f_1}(f_2). 
$$

While \eqref{m-bar-versus-m-hat} is obvious, identity \eqref{fN-F2-versus-f-hat}
requires some work.  

First, we observe that the diagram
\begin{equation}
\label{E-hat-versus-E}
\begin{tikzpicture}
\matrix (m) [matrix of math nodes, row sep=2.3em, column sep=5.5em]
{\wh{\F}_2  &  \wh{\F}_2 \\ 
\F_2 &  \F_2/\N_{\F_2}\\};
\path[->, font=\scriptsize]
(m-2-1) edge node[above] {$w \mapsto E_{m_1, f_1}(w)\N_{\F_2}$} (m-2-2)
edge node[left] {$j$} (m-1-1)
(m-1-1) edge node[above] {$E_{\hat{m}_1, \hat{f}_1}$} (m-1-2)
(m-1-2) edge node[right] {$\hcP_{\N_{\F_2}}$} (m-2-2);
\end{tikzpicture}
\end{equation}
commutes. 

Second, since $\hcP_{\K_{\F_2}} (\hat{f}_2) = \hcP_{\K_{\F_2}}(f_2)$, we have
\begin{equation}
\label{f2-versus-f2hat}
\hat{f}_2 = f_2 \hat{b}, 
\end{equation}
where\footnote{Just as in Proposition \ref{prop:KF2-hat-NF2-hat}, 
we identify $\wh{\K}_{\F_2}$ with $\hcP^{-1}_{\K_{\F_2}}(1_{\F_2/\K_{\F_2}})$.} 
$\hat{b} \in \wh{\K}_{\F_2}$. Combining this observation with equation \eqref{Emf-hat-NF2-hat} in 
Proposition \ref{prop:KF2-hat-NF2-hat} and commutativity of diagram \eqref{E-hat-versus-E}, 
we deduce that
\begin{equation}
\label{hcP-Emf-hat-Emf}
\hcP_{\N_{\F_2}} \big(E_{\hat{m}_1, \hat{f}_1}(\hat{f}_2)\big) = 
\hcP_{\N_{\F_2}} \big(E_{\hat{m}_1, \hat{f}_1}(f_2)\big) = 
E_{m_1, f_1} (f_2)\, \N_{\F_2}\,.  
\end{equation}

Therefore  
$$
\hcP_{\N_{\F_2}} \big(  \hat{f}_1 E_{\hat{m}_1, \hat{f}_1}(\hat{f}_2) \big) = 
\hcP_{\N_{\F_2}} (  \hat{f}_1) \, \hcP_{\N_{\F_2}} \big(E_{\hat{m}_1, \hat{f}_1}(\hat{f}_2)\big)=
f_1\, E_{m_1, f_1} (f_2)\, \N_{\F_2}\,.
$$
Thus identity \eqref{fN-F2-versus-f-hat} holds and equation \eqref{PR-composition} follows.
\end{proof}

In view of Corollary \ref{cor:genuine-iff} which is proved in the next section,
we call $\PR$ the \e{approximation functor}.

\section{The version of the Main Line functor for $\GTh_{gen}$}
\label{sec:ML}

Recall that, for every isolated object $\N$ of the groupoid $\GTSh$, $\GT(\N)= \GTSh(\N,\N)$. 
In particular, $\GT(\N)$ is a (finite) group.

Let us show that the assignment 
\begin{equation}
\label{ML-objects}
\ML(\N):=\GT(\N)
\end{equation}
can be upgraded to a functor $\ML$ from the poset $\NFI^{isolated}_{\PB_3}(\B_3)$
to the category of finite groups. 

For $\N, \H \in \NFI^{isolated}_{\PB_3}(\B_3)$, $\N \le \H$, we set 
\begin{equation}
\label{ML-morphisms}
\ML(\N \to \H) := \cR_{\N, \H}.
\end{equation}
Recall that, due to Remark \ref{rem:cR-homomorphism},  
the map $\cR_{\N, \H}: \GT(\N) \to \GT(\H)$ is a group homomorphism. 

It is obvious that, if $\N^{(3)} \le \N^{(2)} \le \N^{(1)}$, then
\begin{equation}
\label{we-get-a-functor}
\cR_{\N^{(2)}, \N^{(1)}} \circ  \cR_{\N^{(3)}, \N^{(2)}} =  \cR_{\N^{(3)}, \N^{(1)}}.
\end{equation}

Thus formulas \eqref{ML-objects}, \eqref{ML-morphisms} define a functor $\ML$ from 
the poset  $\NFI^{isolated}_{\PB_3}(\B_3)$ to the category of finite groups. 
We call $\ML$ the \e{Main Line functor}.

Our next goal is to show that the group $\GTh_{gen}$ is isomorphic 
to $\lim(\ML)$. For this purpose, we need to prove the following auxiliary statement: 

\begin{prop}  
\label{prop:we-have-N}
For every positive integer $K$, there exists $\N \in  \NFI^{isolated}_{\PB_3}(\B_3)$ 
such that $K | N_{\ord}$. Furthermore, for every $\H \in \NFI(\F_2)$, there exists
$\N \in  \NFI^{isolated}_{\PB_3}(\B_3)$, such that $\N_{\F_2} \le \H$. Finally, 
for every pair $(K, \H) \in \ZZ_{\ge 1} \times \NFI(\F_2)$, there exists 
$\N \in \NFI^{isolated}_{\PB_3}(\B_3)$ such that 
 $K | N_{\ord}$ and $\N_{\F_2} \le \H$. 
\end{prop}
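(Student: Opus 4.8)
The plan is to reduce, via the ``diamond'' operation $\N \mapsto \N^{\dia}$ of Proposition \ref{prop:N-diamond} and the intersection-closure of Proposition \ref{prop:cap-isolated}, to the much softer problem of exhibiting \emph{some} $\N \in \NFI_{\PB_3}(\B_3)$ (not necessarily isolated) realizing each of the two conditions separately; passing to $\N^{\dia}$ can only help because, by Proposition \ref{prop:get-cR-N-H}, shrinking a subgroup enlarges its ``$N_{\ord}$'' divisibility-wise and shrinks its intersection with $\F_2$.

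\emph{First statement.} Fix $K \ge 1$ and set $\N_1 := [\PB_3, \PB_3]\,\PB_3^{K}$, where $\PB_3^{K}$ denotes the subgroup generated by all $K$-th powers. Both $[\PB_3,\PB_3]$ and $\PB_3^{K}$ are characteristic in $\PB_3$, hence so is their product $\N_1$; since $\PB_3 \unlhd \B_3$, this makes $\N_1 \unlhd \B_3$, and $\N_1 \le \PB_3$ is clear. Using $\PB_3 \cong \F_2 \times \lan c \ran$ one identifies $\PB_3/\N_1$ with $\PB_3^{ab}/K\PB_3^{ab} \cong (\ZZ/K\ZZ)^3$, a finite group in which the images of $x_{12}$, $x_{23}$, $c$ each have order exactly $K$; thus $\N_1 \in \NFI_{\PB_3}(\B_3)$ with $(N_1)_{\ord} = K$. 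Now $\N_1^{\dia} \in \NFI^{isolated}_{\PB_3}(\B_3)$ and $\N_1^{\dia} \le \N_1$ by Proposition \ref{prop:N-diamond}, so Proposition \ref{prop:get-cR-N-H} gives $(N_1)_{\ord} \mid (N_1^{\dia})_{\ord}$, i.e. $K \mid (N_1^{\dia})_{\ord}$, proving the first claim with $\N := \N_1^{\dia}$.

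\emph{Second statement.} Fix $\H \in \NFI(\F_2)$. As $c$ is central in $\B_3$, the set $\H \lan c \ran$ is a subgroup of $\PB_3$; since $\F_2 \cap \lan c \ran = \{1\}$ it equals $\H \times \lan c \ran$, satisfies $(\H \times \lan c \ran) \cap \F_2 = \H$, and has index $|\B_3:\PB_3|\cdot|\F_2:\H| < \infty$ in $\B_3$. Put $\N_2 := \Core_{\B_3}(\H \times \lan c \ran)$: this is a finite-index normal subgroup of $\B_3$ contained in $\H \times \lan c \ran \le \PB_3$, so $\N_2 \in \NFI_{\PB_3}(\B_3)$, and $(\N_2)_{\F_2} = \N_2 \cap \F_2 \le (\H \times \lan c \ran) \cap \F_2 = \H$. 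Then $\N_2^{\dia} \in \NFI^{isolated}_{\PB_3}(\B_3)$ with $\N_2^{\dia} \le \N_2$, whence $(\N_2^{\dia})_{\F_2} \le (\N_2)_{\F_2} \le \H$ by Proposition \ref{prop:get-cR-N-H}, proving the second claim.

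\emph{Combined statement.} Given $(K, \H) \in \ZZ_{\ge 1} \times \NFI(\F_2)$, take $\N_1^{\dia}$ and $\N_2^{\dia}$ as above and set $\N := \N_1^{\dia} \cap \N_2^{\dia}$. Then $\N \in \NFI_{\PB_3}(\B_3)$, and $\N$ is isolated by Proposition \ref{prop:cap-isolated}. From $\N \le \N_1^{\dia}$ and Proposition \ref{prop:get-cR-N-H} we get $(N_1^{\dia})_{\ord} \mid N_{\ord}$, so $K \mid N_{\ord}$; from $\N \le \N_2^{\dia}$ we get $\N_{\F_2} \le (\N_2^{\dia})_{\F_2} \le \H$. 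This $\N$ is the required object. The only mildly delicate points are arranging $\B_3$-normality rather than mere $\PB_3$-normality of the auxiliary subgroups --- achieved by using a \emph{characteristic} subgroup of $\PB_3$ in the first case and the normal core in $\B_3$ in the second --- and observing that the passage to the ``diamond'' never spoils the two properties; I do not anticipate any genuine obstacle beyond this bookkeeping.
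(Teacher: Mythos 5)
Your proof is correct and follows essentially the same route as the paper: for the second statement the paper also reduces to the normal core of $\H \times \lan c \ran$ (packaged as $\ker(\ti{\psi})$ for the homomorphism $\ti{\psi}\colon \PB_3 \to G$ with $\ti{\psi}(c)=1$), then passes to $\N^{\dia}$ via Proposition~\ref{prop:N-diamond}; for the third statement the paper likewise intersects two isolated objects and invokes Proposition~\ref{prop:cap-isolated}. The paper leaves the first statement to the reader, and your explicit construction $\N_1 = [\PB_3,\PB_3]\,\PB_3^{K}$ (characteristic in $\PB_3$, hence normal in $\B_3$, with $(\N_1)_{\ord}=K$) is a clean way to fill that gap.
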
 
\begin{proof} The proof of the first statement of the proposition is straightforward, 
so we leave it to the reader. 

Since $\H$ is a finite index normal subgroup of $\F_2$, there exists a group 
homomorphism $\psi$ from $\F_2$ to a finite group $G$ such that 
$$
\ker(\psi) = \H.
$$ 

Clearly, the formulas 
\begin{equation}
\label{psi-tilde}
\ti{\psi}(x_{12}):= \psi(x), \qquad  
\ti{\psi}(x_{23}):= \psi(y),  \qquad \ti{\psi}(c) := 1_{G}
\end{equation}
define a group homomorphism $\ti{\psi}: \PB_3 \to G$.

In general, the subgroup $\ker(\ti{\psi})$ is not normal in $\B_3$. 
So we denote by $\ti{\N}$ the normal core of $\ker(\ti{\psi})$ in $\B_3$. 
It is clear that $\ti{\N} \in \NFI_{\PB_3}(\B_3)$ and $\ti{\N}_{\F_2} \le \ker(\psi)$. 

Let
$$
\N ~: =~ \bigcap_{\K \in \Ob(\GTSh_{\conn}(\ti{\N}))}\, \K. 
$$
Due to Proposition \ref{prop:N-diamond}, $\N$ is an isolated object of $\GTSh$.
Moreover, since $\N \le \ti{\N}$, we have $\N_{\F_2} \le \ker(\psi)$. 

The second statement of the proposition is proved. 

For $(K, \H) \in \ZZ_{\ge 1} \times \NFI(\F_2)$, 
there exist $\N^{(1)}, \N^{(2)} \in \NFI^{isolated}_{\PB_3}(\B_3)$ such that 
$K | N^{(1)}_{\ord}$ and $\N^{(2)}_{\F_2} \le \H$. Due to Proposition \ref{prop:cap-isolated},
$$
\N := \N^{(1)} \cap \N^{(2)}
$$
is an isolated object of $\GTSh$. Using the inclusions 
$\N \subset  \N^{(1)}$ and $\N \subset \N^{(2)}$, it is not hard to 
show that $K | N_{\ord}$ and $\N_{\F_2} \le \H$, respectively.

The proposition is proved. 
\end{proof}

\bigskip

We are now ready to construct an isomorphism of groups $\GTh_{gen} \iso \lim(\ML)$.
\begin{thm}  
\label{thm:lim-ML}
Let $(\hat{m}, \hat{f}) \in \GTh_{gen}$ and $\N \in \NFI^{isolated}_{\PB_3}(\B_3)$. 
The formula 
\begin{equation}
\label{Psi}
\Psi(\hat{m}, \hat{f})(\N) := \PR_{\N}(\hat{m}, \hat{f})
\end{equation}
defines an isomorphism of groups $\Psi: \GTh_{gen} \iso \lim(\ML)$. 
Moreover, $\Psi$ is a homeomorphism (of topological spaces).   
\end{thm}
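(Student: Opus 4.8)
The plan is to build the inverse map explicitly and then verify both directions, deferring topology to the end. Given a coherent family $\{[m_{\N}, f_{\N}]\}_{\N \in \NFI^{isolated}_{\PB_3}(\B_3)} \in \lim(\ML)$, I would first produce a candidate element $(\hat{m}, \hat{f}) \in \Zhat \times \wh{\F}_2$ by a limit construction. For the $\Zhat$-component: since the integers $N_{\ord}$ are cofinal among all positive integers (first statement of Proposition \ref{prop:we-have-N}) and the residue classes $m_{\N} + N_{\ord}\ZZ$ are compatible under the reduction maps $\cR_{\N,\H}$ (which on the $\ZZ$-part is just reduction $\ZZ/N_{\ord}\ZZ \to \ZZ/H_{\ord}\ZZ$), the family $\{m_{\N} + N_{\ord}\ZZ\}$ determines a well-defined element $\hat{m} \in \varprojlim \ZZ/N_{\ord}\ZZ = \Zhat$. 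For the $\wh{\F}_2$-component: since the subgroups $\N_{\F_2}$ are cofinal in $\NFI(\F_2)$ (second statement of Proposition \ref{prop:we-have-N}) and the cosets $f_{\N}\N_{\F_2}$ are compatible under $\F_2/\N_{\F_2} \to \F_2/\H_{\F_2}$, they determine $\hat{f} \in \varprojlim \F_2/\N_{\F_2} = \wh{\F}_2$ (using that $\{\N_{\F_2}\}$ is cofinal in $\NFI(\F_2)$, so this limit really is the full profinite completion). The joint cofinality (third statement of Proposition \ref{prop:we-have-N}) guarantees the two components are compatible, i.e. for any $\N$, $\hcP_{N_{\ord}}(\hat m) = m_{\N} + N_{\ord}\ZZ$ and $\hcP_{\N_{\F_2}}(\hat f) = f_{\N}\N_{\F_2}$ simultaneously.

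Next I would check that $(\hat{m}, \hat{f})$ so constructed actually lies in $\GTh_{gen}$, and that $\Psi$ and this assignment are mutually inverse. For membership: $\hat f \in [\wh{\F}_2,\wh{\F}_2]^{top.\,cl.}$ because each $f_{\N}$ can be taken in $[\F_2,\F_2]$ (the $\GT$-shadows are charming), so $\hat f$ is a limit of elements of commutator subgroups; the hexagon relations \eqref{hexa1-hat}, \eqref{hexa11-hat} hold because they hold modulo every $\N \in \NFI^{isolated}_{\PB_3}(\B_3)$ (by \eqref{hexa1}, \eqref{hexa11} for each $[m_{\N},f_{\N}]$), these $\N$ are cofinal in $\NFI_{\PB_3}(\B_3)$, and $\wh{\B}_3$ is the inverse limit over all of $\NFI(\B_3)$ — so an identity in $\wh{\B}_3$ can be tested modulo a cofinal family; invertibility follows since each $2m_{\N}+1$ is a unit mod $N_{\ord}$, hence $2\hat m + 1 \in \Zhat^{\times}$, and by Proposition \ref{prop:GT-gen-mon-submonoid} together with the fact that the inverse can be assembled the same way from the inverse $\GT$-shadows $[\ti m_{\N}, \ti f_{\N}]$ (Theorem \ref{thm:GTSh}), which themselves form a coherent family because $\cR_{\N,\H}$ is a group homomorphism (Remark \ref{rem:cR-homomorphism}). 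That $\Psi$ is a group homomorphism is Theorem \ref{thm:GThgen-GTSh} (functoriality of $\PR$, restricted to isolated objects — composition of $\GT$-shadows along \eqref{GTSh-composition} matches $\bullet$). That the two maps are inverse to each other is then a formal check: starting from $(\hat m,\hat f)$, forming the family $\{\PR_{\N}(\hat m,\hat f)\}$, and reassembling gives back $(\hat m,\hat f)$ by the defining property of inverse limits; conversely, a coherent family is recovered from its assembled element by the compatibility noted above. Injectivity of $\Psi$ also follows directly from the observation that $\wh{\B}_3$ (equivalently $\Zhat \times \wh{\F}_2$ on the relevant coordinates) is separated by the quotients attached to isolated objects.

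Finally, for the topological statement: $\GTh_{gen}$ carries the subspace topology from $\Zhat \times \wh{\F}_2$, and $\lim(\ML)$ carries the inverse-limit topology, where each $\GT(\N)$ is finite discrete. The map $\Psi$ is continuous because each coordinate $\Psi(-)(\N) = \PR_{\N}(-) = (\hcP_{N_{\ord}}(-), \hcP_{\N_{\F_2}}(-))$ is continuous (being a composition of the continuous projections $\Zhat \to \ZZ/N_{\ord}\ZZ$ and $\wh{\F}_2 \to \F_2/\N_{\F_2}$ with the inclusion $\GTh_{gen} \hookrightarrow \Zhat \times \wh{\F}_2$) into a discrete target, and a map into an inverse limit is continuous iff all its coordinates are. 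For the inverse: $\GTh_{gen}$ is a closed subset of the compact Hausdorff space $\Zhat \times \wh{\F}_2$ (it is cut out by closed conditions — the hexagon relations and the closed commutator condition and $2\hat m+1 \in \Zhat^\times$ which is closed since $\Zhat^\times$ is closed in $\Zhat$), hence compact; $\lim(\ML)$ is Hausdorff as an inverse limit of discrete (hence Hausdorff) spaces; a continuous bijection from a compact space to a Hausdorff space is a homeomorphism.

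\textbf{Main obstacle.} The delicate point is the cofinality bookkeeping that lets one assemble $(\hat m,\hat f)$ coherently: one must use Proposition \ref{prop:we-have-N} in its full strength (joint control of $N_{\ord}$ and $\N_{\F_2}$) to ensure the $\Zhat$- and $\wh{\F}_2$-components glued from the family agree with the shadow data at \emph{every} isolated $\N$ simultaneously, and — crucially — that the limit over the cofinal subsystem $\{\N_{\F_2} : \N \in \NFI^{isolated}_{\PB_3}(\B_3)\}$ really reconstructs all of $\wh{\F}_2$ and that the hexagon relations, verified only modulo isolated $\N$, propagate to an honest identity in $\wh{\B}_3$ (i.e. that isolated objects are cofinal in $\NFI_{\PB_3}(\B_3)$, which is Proposition \ref{prop:N-diamond}, and that $\NFI_{\PB_3}(\B_3)$ is itself cofinal enough in $\NFI(\B_3)$ to test relations). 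Everything else is bookkeeping with inverse limits of finite groups.
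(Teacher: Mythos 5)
Your overall strategy mirrors the paper's: build the assembly map by choosing $\N$ through Proposition \ref{prop:we-have-N}, verify membership in $\GTh_{gen,mon}$ by testing the hexagon relations modulo a cofinal family (passing through $(\L \cap \PB_3)^{\dia}$ for arbitrary $\L \in \NFI(\B_3)$), then invoke a continuous-bijection-from-compact-to-Hausdorff argument. Two points of comparison are worth noting, one cosmetic and one a genuine gap.

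The cosmetic difference is your treatment of invertibility. You assemble the inverse element from the coherent family of inverse $\GT$-shadows $[\ti m_{\N}, \ti f_{\N}]$, which works but requires you to separately check coherence via Remark \ref{rem:cR-homomorphism}. The paper dispatches this in one line: once the assembly map $\Te: \lim(\ML) \to \GTh_{gen,mon}$ is known to be a homomorphism of monoids and $\lim(\ML)$ is a group, $\Te$ necessarily lands in the group of units.

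The genuine gap is in your topological argument. You prove continuity of $\Psi$ (the direction toward $\lim(\ML)$), and then to upgrade a continuous bijection to a homeomorphism you need the \emph{domain} to be compact. So you assert $\GTh_{gen}$ is compact, claiming it is closed in $\Zhat \times \wh{\F}_2$ because it is cut out by the hexagon relations, the closed commutator condition, and the condition $2\hat m + 1 \in \Zhat^{\times}$. But $\GTh_{gen}$ is defined as the group of invertible elements of the monoid $\GTh_{gen,mon}$, and invertibility in a topological monoid is not in general a closed condition. Your substitution of ``$2\hat m + 1 \in \Zhat^{\times}$'' for the invertibility condition is precisely the nontrivial content of Proposition \ref{prop:H-I-H-II} (that $\GTh_{gen}$ coincides with $\GTh_0$), which in the paper's logical order is established \emph{after} Theorem \ref{thm:lim-ML}, and even that proposition requires showing the candidate inverse actually satisfies the hexagons --- so you cannot simply cite it here without circularity. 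The paper sidesteps this entirely by proving continuity of $\Te$ (the assembly direction) via its coordinates $\hcP_{K} \circ P_{\Zhat} \circ \Te$ and $\hcP_{\H} \circ P_{\wh{\F}_2} \circ \Te$, and then using that $\lim(\ML)$ is obviously compact while $\GTh_{gen}$ is obviously Hausdorff. To repair your argument, prove $\Te$ continuous rather than $\Psi$; once you have that, compactness of $\GTh_{gen}$ falls out as a corollary (continuous image of a compact set) rather than being an input.
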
  
\begin{proof}
Since $\N$ is an isolated object of the groupoid $\GTSh$, 
$\N^{(\hat{m}, \hat{f})} = \N$ for every $(\hat{m}, \hat{f}) \in  \GTh_{gen}$. 
Furthermore, Theorem \ref{thm:GThgen-GTSh} implies that
the assignment 
$$
(\hat{m}, \hat{f}) \mapsto  \PR_{\N}(\hat{m}, \hat{f})
$$
is a group homomorphism from $\GTh_{gen}$ to the finite group $\GT(\N)= \GTSh(\N, \N)$. 

It is clear that, for every $\N, \H \in \NFI^{isolated}_{\PB_3}(\B_3)$, $\N \le \H$, we have 
$$
\cR_{\N, \H} \circ \PR_{\N}(\hat{m}, \hat{f}) = \PR_{\H}(\hat{m}, \hat{f}).
$$
Thus the formula in \eqref{Psi} indeed defines a group homomorphism $\Psi : \GTh_{gen} \to  \lim(\ML)$.

To prove the theorem, we will construct a map $\Te : \lim(\ML) \to \GTh_{gen}$
and show that 
\begin{itemize}

\item $\Te$ is the inverse of $\Psi$ and 

\item $\Te$ is a homeomorphism of topological spaces.  

\end{itemize}
 
Let $\hat{T} \in  \lim(\ML)$, $K \in \ZZ_{\ge 1}$ and $\H \in \NFI(\F_2)$.

Due to Proposition \ref{prop:we-have-N},  there exists $\N \in \NFI^{isolated}_{\PB_3}(\B_3)$ 
such that $K | N_{\ord}$ and $\N_{\F_2} \le \H$. Let $(m, f) \in \ZZ \times \F_2$ be a pair  
that represents the $\GT$-shadow $\hat{T}(\N)$. 
We set 
\begin{equation}
\label{m-hat-f-hat}
\hat{m}(K) := m + K \ZZ, \qquad \hat{f}(\H) := f \H.  
\end{equation}

Since $\hat{T}$ belongs to $\lim(\ML)$, the residue class $\hat{m}(K)$ and
the coset $\hat{f}(\H)$ do not depend on the choice of $\N \in \NFI^{isolated}_{\PB_3}(\B_3)$, 
and the formulas in \eqref{m-hat-f-hat} define $\hat{m} \in \Zhat$ and $\hat{f} \in \wh{\F}_2$.

The element $\hat{f}$ belongs to the topological closure of 
the commutator subgroup $[\wh{\F}_2, \wh{\F}_2]$ in $\wh{\F}_2$ due 
to these properties: 

\begin{itemize}

\item for every $\N \in \NFI^{isolated}_{\PB_3}(\B_3)$, 
$\hat{f}(\N_{\F_2})  \in [ \F_2/\N_{\F_2} ,  \F_2/\N_{\F_2}],$

\item the open subsets
$$
\hcP^{-1}_{\N_{\F_2}} (1_{\F_2/\N_{\F_2}}) \subset \wh{\F}_2, \qquad \N \in \NFI^{isolated}_{\PB_3}(\B_3)
$$
form a basis of neighborhoods of $1_{\wh{\F}_2}$ in $\wh{\F}_2$. 

\end{itemize}

Let us prove that the resulting pair $(\hat{m}, \hat{f}) \in \Zhat \times \wh{\F}_2$ satisfies 
hexagon relations \eqref{hexa1-hat} and \eqref{hexa11-hat}.  
For this purpose, we consider 
$\L \in \NFI(\B_3)$ and observe that $\L \cap \PB_3 \in  \NFI_{\PB_3}(\B_3)$. 
In general, $\L \cap \PB_3$ is not an isolated object of the groupoid 
$\GTSh$. However, due to Proposition \ref{prop:N-diamond}, the subgroup 
$\N := \big( \L \cap \PB_3 \big)^{\dia}$ does belong to $\NFI^{isolated}_{\PB_3}(\B_3)$. 
Moreover, since $\N \le \L \cap \PB_3$, $\N$ is a subgroup of $\L$. 

As above, let $(m, f) \in \ZZ \times \F_2$ be a pair  
that represents the $\GT$-shadow $\hat{T}(\N)$. For such a pair $(m,f)$, 
we have 
$$
\hat{m}(N_{\ord}) = m + N_{\ord} \ZZ, \qquad 
\hat{f}(\N_{\F_2}) = f \N_{\F_2}.
$$ 

Evaluating the left hand side 
(resp. the right hand side) of the first hexagon relation \eqref{hexa1-hat} at $\N$, we get 
the left hand side (resp. the right hand side) of the first hexagon relation 
\eqref{hexa1} for $(m,f)$. Thus 
\begin{equation}
\label{hexa1-hat-N}
\big( \si_1^{2 \hat{m} + 1}   \hat{f}^{-1} \si_2^{2 \hat{m} + 1}  \hat{f} \big) (\N) ~ = ~   
\big( \hat{f}^{-1} \si_1 \si_2\, x_{12}^{-\hat{m}} c^{\hat{m}} \big) (\N).
\end{equation}

Similarly, evaluating the left hand side 
(resp. the right hand side) of the second hexagon relation \eqref{hexa11-hat} at $\N$, we get 
the left hand side (resp. the right hand side) of the second hexagon relation 
\eqref{hexa11} for $(m, f)$. Thus 
\begin{equation}
\label{hexa11-hat-N}
\big( \hat{f}^{-1} \si_2^{2\hat{m}+1} \hat{f} \, \si_1^{2\hat{m}+1} \big) (\N)  
~ = ~ \big( \si_2 \si_1 x_{23}^{-\hat{m}} c^{\hat{m}} \, \hat{f} \big) (\N).
\end{equation} 

Since $\N \le \L$, identities \eqref{hexa1-hat-N} and \eqref{hexa11-hat-N} imply that 
$$
\big( \si_1^{2 \hat{m} + 1}   \hat{f}^{-1} \si_2^{2 \hat{m} + 1}  \hat{f} \big) (\L) ~ = ~   
\big( \hat{f}^{-1} \si_1 \si_2\, x_{12}^{-\hat{m}} c^{\hat{m}} \big) (\L).
$$
$$
\big( \hat{f}^{-1} \si_2^{2\hat{m}+1} \hat{f} \, \si_1^{2\hat{m}+1} \big) (\L)  
~ = ~ \big( \si_2 \si_1 x_{23}^{-\hat{m}} c^{\hat{m}} \, \hat{f} \big) (\L).
$$

We proved that the pair $(\hat{m}, \hat{f})$ belongs to 
$\Zhat \times [\wh{\F}_2, \wh{\F}_2]^{top.\,cl.}$ and satisfies 
hexagon relations \eqref{hexa1-hat} and \eqref{hexa11-hat}. 

Thus the assignment $\hat{T} \mapsto (\hat{m}, \hat{f})$ defines a map  
\begin{equation}
\label{Theta}
\Te : \lim(\ML) \to \GTh_{gen, mon}\,,
\end{equation}
where $\GTh_{gen, mon}$ is the monoid defined in Section \ref{sec:GTh-gen-mon-GTh-gen}
(see Proposition \ref{prop:GT-gen-mon-submonoid}). 

Let us prove that $\Te$ is a homomorphism of monoids. 
For this purpose, we consider $\N \in \NFI^{isolated}_{\PB_3}(\B_3)$,
$\hat{T}_1, \hat{T}_2 \in  \lim(\ML)$ and set 
\begin{equation}
\label{mf-mf-hat}
(\hat{m}_1, \hat{f}_1) := \Te(\hat{T}_1), \qquad 
(\hat{m}_2, \hat{f}_2) := \Te(\hat{T}_2), 
\end{equation}
\begin{equation}
\label{mf-hat}
\hat{m} : = 2 \hat{m}_1 \hat{m}_2 + \hat{m}_1 + \hat{m}_2, \qquad 
\hat{f} : = \hat{f}_1 E_{\hat{m}_1, \hat{f}_1} (\hat{f}_2).
\end{equation}

Let $(m_1, f_1) \in \ZZ \times \F_2$ (resp. $(m_2, f_2) \in \ZZ \times \F_2$) be a pair that represents 
the $\GT$-shadow $\hat{T}_1(\N) \in \GT(\N)$ (resp. the $\GT$-shadow $\hat{T}_2(\N) \in \GT(\N)$) and 
\begin{equation}
\label{mf}
m : = 2 m_1 m_2 + m_1 + m_2, \qquad 
f : = f_1 E_{m_1, f_1} (f_2),
\end{equation}
i.e. the pair $(m, f)$ represents the $\GT$-shadow $\hat{T}_1 \bullet \hat{T}_2(\N)$.

To prove the compatibility of $\Te$ with the multiplications in $\lim(\ML)$ and $\GTh_{gen, mon}$, we need to show that 
\begin{equation}
\label{hat-m-m}
\hat{m}(N_{\ord})  = m + N_{\ord} \ZZ\,.
\end{equation}
and
\begin{equation}
\label{hat-f-f}
\hat{f} (\N_{\F_2}) = f \N_{\F_2}. 
\end{equation}
Equation \eqref{hat-m-m} is clearly satisfied. 

As for \eqref{hat-f-f}, since $\hcP_{\N_{\F_2}} : \wh{\F}_2 \to \F_2/\N_{\F_2}$ is a group homomorphism
and $\hcP_{\N_{\F_2}} (\hat{f}_1) = f_1 \N_{\F_2}$, we need to show that 
\begin{equation}
\label{Emf-hat-Emf}
\hcP_{\N_{\F_2}} \big( E_{\hat{m}_1, \hat{f}_1} (\hat{f}_2)\big) ~ = ~  E_{m_1, f_1} (f_2)\, \N_{\F_2}\,.
\end{equation}
This identity was already established in a more general case in the proof of Theorem \ref{thm:GThgen-GTSh}
(see \eqref{hcP-Emf-hat-Emf}). 

It is easy to see that $\Te$ sends the identity element of the group
$\lim(\ML)$ to the identity element of the monoid $\GTh_{gen, mon}$. 


Since $\Te :  \lim(\ML) \to \GTh_{gen, mon}$ is a homomorphism of monoids 
and $\lim(\ML)$ is a group, $\Te(\lim(\ML))$ is a subset of invertible elements of 
the monoid $\GTh_{gen, mon}$. Thus $\Te$ is a group homomorphism 
from $\lim(\ML)$ to $\GTh_{gen}$.

It is clear that 
$$
\Te \circ \Psi = \id_{\GTh_{gen}} 
\qquad \txt{and} \qquad 
\Psi \circ \Te = \id_{\lim(\ML)}, 
$$
i.e. $\Te$ is indeed the inverse of $\Psi$. 

To prove the continuity of $\Te$, we consider it as the map 
from $\lim(\ML)$ to the topological space $\Zhat \times \wh{\F}_2$ and denote by 
$P_{\Zhat}$ (resp. $P_{\wh{\F}_2}$) the projection 
$\Zhat \times \wh{\F}_2 \to \Zhat$ (resp. $\Zhat \times \wh{\F}_2 \to \wh{\F}_2$). 
We need to show that the maps 
$P_{\Zhat} \circ \Te: \lim(\ML) \to \Zhat$ and $P_{\wh{\F}_2} \circ \Te: \lim(\ML)  \to  \wh{\F}_2$
are continuous. 

For a positive integer $K$, we choose $\N \in \NFI^{isolated}_{\PB_3}(\B_3)$
such that $K | N_{\ord}$. Since the map 
$$
\hcP_{K} \circ P_{\Zhat} \circ \Te : \lim(\ML) \to \ZZ/ K\ZZ
$$
factors through the continuous map $\lim(\ML) \to \ZZ/ N_{\ord} \ZZ$, the composition  
$\hcP_{K} \circ P_{\Zhat} \circ \Te$ is continuous. Hence the composition 
$P_{\Zhat} \circ \Te :  \lim(\ML) \to \Zhat$ is continuous. 

Similarly, for $\H \in \NFI(\F_2)$, we choose $\N \in \NFI^{isolated}_{\PB_3}(\B_3)$
such that $\N_{\F_2} \le \H$. Since the map 
$$
\hcP_{\H} \circ P_{\wh{\F}_2} \circ \Te : \lim(\ML) \to \F_2/\H
$$
factors through the continuous map $\lim(\ML) \to \F_2/ \N_{\F_2}$, the composition  
$\hcP_{\H} \circ P_{\wh{\F}_2} \circ \Te$ is continuous.  Hence the composition 
$P_{\wh{\F}_2} \circ \Te :  \lim(\ML) \to \wh{\F}_2$ is continuous. 

Since both maps $P_{\Zhat} \circ \Te :  \lim(\ML) \to \Zhat$ and 
$P_{\wh{\F}_2} \circ \Te :  \lim(\ML) \to \wh{\F}_2$ are continuous, 
so is the map $\Te:  \lim(\ML) \to  \Zhat \times  \wh{\F}_2$.

Now it is easy to see that $\Te:  \lim(\ML) \to  \GTh_{gen}$ is a homeomorphism. 
Indeed, $\Te$ is a continuous bijection from the compact topological space 
$\lim(\ML)$ to a Hausdorff space $\GTh_{gen}$. Thus $\Te$ (as well as $\Psi$)
is homeomorphism. 

Theorem \ref{thm:lim-ML} is proved. 
\end{proof}
\begin{remark}  
\label{rem:GTh-gen-topological-group}
As we mentioned in Remark \ref{rem:topology}, it is not obvious that 
$\GTh_{gen}$ is a topological group with respect to the subset topology coming 
from $\Zhat \times \wh{\F}_2$. However, since $\lim(\ML)$ is obviously a topological 
group, Theorem \ref{thm:lim-ML} implies that $\GTh_{gen}$ is indeed a topological group 
with respect to the subset topology coming from $\Zhat \times \wh{\F}_2$. 
\end{remark}
\begin{cor}  
\label{cor:genuine-iff}
Let $\N \in \NFI_{\PB_3}(\B_3)$. A $\GT$-shadow $[m, f] \in \GT(\N)$ is 
genuine if and only if $[m, f]$ belongs to the image of the map 
$$
\cR_{\K, \N} : \GT(\K) \to \GT(\N)
$$
for every $\K \in \NFI_{\N}(\B_3)$.  
\end{cor}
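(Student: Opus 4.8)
The plan is to derive this from Theorem~\ref{thm:lim-ML} together with a formal compactness argument. Fix $\N \in \NFI_{\PB_3}(\B_3)$ and note that $\NFI_{\N}(\B_3)$ is precisely $\{\K \in \NFI_{\PB_3}(\B_3) : \K \le \N\}$, since $\N \le \PB_3$. The reduction maps turn $\K \mapsto \GT(\K)$ into an inverse system of finite sets over each of the posets $\NFI_{\PB_3}(\B_3)$, $\NFI_{\N}(\B_3)$, $\NFI^{isolated}_{\PB_3}(\B_3)$, all of which are downward directed (closure under finite intersections, using Proposition~\ref{prop:cap-isolated} for the last), and the last two of which are cofinal in the first (for $\NFI_{\N}(\B_3)$: given $\N'$, the subgroup $\N'\cap\N$ lies below both $\N'$ and $\N$; for $\NFI^{isolated}_{\PB_3}(\B_3)$: Proposition~\ref{prop:N-diamond}). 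Hence a compatible family over either cofinal subposet extends uniquely to one over all of $\NFI_{\PB_3}(\B_3)$, and these three inverse limits are canonically identified; the one over $\NFI^{isolated}_{\PB_3}(\B_3)$ is $\lim(\ML)$, and by \eqref{Psi} and Theorem~\ref{thm:GThgen-GTSh} the family corresponding under $\Psi$ to $(\hat m,\hat f)\in\GTh_{gen}$ has $\L$-component $\PR_{\L}(\hat m,\hat f)$. Unwinding the identifications, the genuine $\GT$-shadows in $\GT(\N)$ are exactly the $\N$-components of compatible families over $\NFI_{\PB_3}(\B_3)$.

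The ``only if'' direction is immediate. If $[m,f]=\PR_{\N}(\hat m,\hat f)$, then for every $\K\in\NFI_{\N}(\B_3)$ the shadow $\PR_{\K}(\hat m,\hat f)\in\GT(\K)$ satisfies $\cR_{\K,\N}\big(\PR_{\K}(\hat m,\hat f)\big)=\PR_{\N}(\hat m,\hat f)=[m,f]$, directly from the definitions of $\cR_{\K,\N}$ and $\PR$ (using $\K_{\F_2}\le\N_{\F_2}$ and $N_{\ord}\mid K_{\ord}$); so $[m,f]$ survives into $\K$.

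For the ``if'' direction, suppose $[m,f]\in\GT(\N)$ survives into every $\K\in\NFI_{\N}(\B_3)$, and set $X_{\K}:=\cR_{\K,\N}^{-1}([m,f])\subseteq\GT(\K)$. By hypothesis each $X_{\K}$ is non-empty and finite, and whenever $\K'\le\K$ in $\NFI_{\N}(\B_3)$ the map $\cR_{\K',\K}$ restricts to $X_{\K'}\to X_{\K}$, because $\cR_{\K,\N}\circ\cR_{\K',\K}=\cR_{\K',\N}$. Thus $\{X_{\K}\}$ is an inverse system of non-empty finite sets over the downward-directed poset $\NFI_{\N}(\B_3)$, so by the usual compactness argument (the product of the $X_{\K}$, with the discrete topology, is compact by Tychonoff, and the closed subsets cut out by the compatibility conditions have the finite intersection property by directedness) its inverse limit is non-empty. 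Pick a compatible family $(\xi_{\K})_{\K\in\NFI_{\N}(\B_3)}$ in it; since $X_{\N}=\{[m,f]\}$, we have $\xi_{\N}=[m,f]$. Extending this family to $\NFI^{isolated}_{\PB_3}(\B_3)$ by $\xi_{\L}:=\cR_{\L\cap\N,\L}(\xi_{\L\cap\N})$ for isolated $\L$ — well defined and compatible by functoriality of the reduction maps, cf.~\eqref{we-get-a-functor} — yields $\hat T\in\lim(\ML)$; put $(\hat m,\hat f):=\Psi^{-1}(\hat T)\in\GTh_{gen}$. By the first paragraph the $\N$-component of (the extension of) $\hat T$ is $\PR_{\N}(\hat m,\hat f)$, and since that extension restricts to $(\xi_{\K})$ on the cofinal subposet $\NFI_{\N}(\B_3)$ it equals $\xi_{\N}=[m,f]$. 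Hence $\PR_{\N}(\hat m,\hat f)=[m,f]$, i.e. $[m,f]$ is genuine.

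The main obstacle is the ``if'' direction, and specifically the temptation to argue level by level through the \emph{genuine} shadows in each $\GT(\K)$: that is circular, since whether a chosen preimage of $[m,f]$ in $\GT(\K)$ is genuine is itself exactly the kind of statement being proved. The key point of the argument above is that the sets $X_{\K}$ are purely combinatorial, their non-emptiness \emph{is} the hypothesis, and the non-emptiness of the inverse limit — which is what manufactures the element $(\hat m,\hat f)\in\GTh_{gen}$ via Theorem~\ref{thm:lim-ML} — is then a formal consequence of finiteness and downward-directedness.
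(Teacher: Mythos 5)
Your proof is correct and follows essentially the same route as the paper: define the fiber system $X_{\K}=\cR_{\K,\N}^{-1}([m,f])$, observe it is an inverse system of non-empty finite sets over the directed poset $\NFI_{\N}(\B_3)$, conclude its inverse limit is non-empty (the paper cites \cite[Proposition 1.1.4]{RZ-profinite} where you unwind the Tychonoff argument), and then pass via the cofinal subposet $\NFI_{\N}(\B_3)\cap\NFI^{isolated}_{\PB_3}(\B_3)$ to an element of $\lim(\ML)\cong\GTh_{gen}$ whose approximation at $\N$ is $[m,f]$. The only cosmetic difference is that you extend a compatible family from $\NFI_{\N}(\B_3)$ up to all of $\NFI^{isolated}_{\PB_3}(\B_3)$ via $\xi_{\L}:=\cR_{\L\cap\N,\L}(\xi_{\L\cap\N})$, whereas the paper restricts down to the common cofinal subposet and invokes cofinality directly; these are two descriptions of the same identification.
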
  
\begin{proof}
If $[m, f] \in \GT(\N)$ is genuine, then $[m, f]$ obviously belongs to the image of 
the map $\cR_{\K, \N} : \GT(\K) \to \GT(\N)$ for every $\K \in \NFI_{\N}(\B_3)$. 

Thus it remains to prove the ``if'' implication. 

For $\K \in \NFI_{\N}(\B_3)$, we set 
$$
\cF(\K) := \cR_{\K, \N}^{-1}([m,f]) \subset \GT(\K). 
$$
Due to the given condition on $[m, f]$, the set $\cF(\K)$ is non-empty 
for every $\K \in \NFI_{\N}(\B_3)$. 

Property \eqref{we-get-a-functor} implies 
that the assignment $\K \mapsto \cF(\K)$ upgrades to a functor 
from the poset  $\NFI_{\N}(\B_3)$ to the category of finite sets 
(since $\GT(\K)$ is finite, so is $\cR_{\K, \N}^{-1}([m,f])$). 
Indeed, if $\H, \K \in \NFI_{\N}(\B_3)$ and $\H \le \K$, then 
$ \cR_{\H, \K}(\cF(\H)) \subset \cF(\K)$. So we set
$$
\cF(\H \to \K)  :=  \cR_{\H, \K} \big|_{\cF(\H)} : \cF(\H) \to  \cF(\K).  
$$

Since $\cF(\K)$ is a finite non-empty set for every $\K \in  \NFI_{\N}(\B_3)$, 
\cite[Proposition 1.1.4]{RZ-profinite} implies that $\lim(\cF)$ is non-empty. 

Taking an arbitrary element in $\lim(\cF)$ and evaluating it at elements of 
the poset $\NFI_{\N}(\B_3) \cap \NFI^{isolated}_{\PB_3}(\B_3)$, 
we get an element $(\hat{m}, \hat{f}) \in  \GTh_{gen} \cong \lim(\ML)$ such that 
$$
\PR_{\N}(\hat{m}, \hat{f}) = [m, f].  
$$

Thus the $\GT$-shadow $[m, f]$ is indeed genuine. 
\end{proof}

\subsection{Simplified hexagon relations in the profinite setting}
\label{sec:simple-hexagons}

In this section, we prove that 
\begin{prop}  
\label{prop:H-I-H-II}
The group $\GTh_{gen}$ (see Definition \ref{dfn:GTh-gen}) is isomorphic to  
the group $\GTh_0$ introduced in \cite[Section 0.1]{HS-fund-groups}.
\end{prop}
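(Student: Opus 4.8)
The plan is to identify the defining relations of $\GTh_{gen}$ with those of the group $\GTh_0$ of \cite{HS-fund-groups}. Recall that $\GTh_{gen}$ (Definition \ref{dfn:GTh-gen}) is the group of units of the monoid $\GTh_{gen, mon}$ of pairs $(\hat{m},\hat{f})\in\Zhat\times[\wh{\F}_2,\wh{\F}_2]^{top.\,cl.}$ satisfying the hexagon relations \eqref{hexa1-hat}, \eqref{hexa11-hat}, while $\GTh_0$ consists of the invertible pairs $(\hat{m},\hat{f})\in\Zhat\times[\wh{\F}_2,\wh{\F}_2]^{top.\,cl.}$ satisfying \eqref{H-I-hat} and \eqref{H-II-hat}. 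Since both groups are unit groups for the same binary operation $\bullet$ of \eqref{bullet}, it suffices to prove that, for $(\hat{m},\hat{f})\in\Zhat\times[\wh{\F}_2,\wh{\F}_2]^{top.\,cl.}$, the hexagon relations \eqref{hexa1-hat}, \eqref{hexa11-hat} are equivalent to the pair of relations \eqref{H-I-hat}, \eqref{H-II-hat}. This is the profinite counterpart of Proposition \ref{prop:simple-hexa}, and the plan is to transport the (purely formal) proof of that proposition from the finite quotients $\B_3/\N$ directly to $\wh{\B}_3$ and $\wh{\F}_2$, rather than to argue modulo each $\N\in\NFI_{\PB_3}(\B_3)$.

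First I would observe that one direction is already in hand: combining \eqref{Tmf-Delta} and \eqref{Tmf-Delta1} in the proof of Proposition \ref{prop:endom-B3-hat} gives $\hat{f}^{-1}\D=\D\hat{f}$ in $\wh{\B}_3$; since $\D\in\B_3\subset\wh{\B}_3$ normalizes $\wh{\F}_2$ and conjugation by $\D$ induces the automorphism $\te$ on $\wh{\F}_2$ (see \eqref{conj-by-D}), this reads $\te(\hat{f})^{-1}=\hat{f}$, i.e. \eqref{H-I-hat} holds whenever the hexagons do (this is the profinite form of Proposition \ref{prop:H-I}). Next I would note that the conjugation formulas \eqref{conj-by-si1}, \eqref{conj-by-si2}, \eqref{conj-by-D} are identities already in $\PB_3$, hence hold in $\wh{\PB}_3\subset\wh{\B}_3$; and that, because $\hat{f}$ lies in the closure of $[\F_2,\F_2]$ and so has trivial image in $\wh{\F}_2/[\wh{\F}_2,\wh{\F}_2]^{top.\,cl.}\cong\Zhat^2$, conjugation of $\hat{f}$ by $\si_1$, $\si_2$, or $\si_1\si_2$ produces no residual power of the central element $c$ — this follows by continuity of conjugation together with the density of $[\F_2,\F_2]$ in $[\wh{\F}_2,\wh{\F}_2]^{top.\,cl.}$, so that conjugation by a braid acts on $\hat{f}$ exactly through the corresponding substitution endomorphism of $\wh{\F}_2$. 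With these two facts the argument of Proposition \ref{prop:simple-hexa} goes through verbatim in the profinite setting: \eqref{hexa1-hat} becomes equivalent to the profinite analogue of \eqref{hexa1-equiv} and \eqref{hexa11-hat} to that of \eqref{hexa11-equiv}; conjugating the relation $\hat{f}\,\te(\hat{f})=1_{\wh{\F}_2}$ by $\si_1\si_2$ and by $(\si_1\si_2)^2$ yields the profinite analogues of \eqref{shexagon1-yz} and \eqref{shexagon1-zx}; and combining these one concludes that \eqref{hexa1-hat}, \eqref{hexa11-hat} hold if and only if \eqref{H-II-hat} holds. This identifies $\GTh_{gen, mon}$, as a subset of $\Zhat\times\wh{\F}_2$, with the set of pairs in $\Zhat\times[\wh{\F}_2,\wh{\F}_2]^{top.\,cl.}$ satisfying \eqref{H-I-hat}, \eqref{H-II-hat}; passing to groups of units (in both cases the unit group of this common $\bullet$-monoid) then gives $\GTh_{gen}=\GTh_0$.

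Given Proposition \ref{prop:simple-hexa}, this adaptation is essentially routine, so I do not expect a single hard obstacle; the two points that will need care are the profinite "$c$-cancellation" in the conjugation identities (handled by the density/continuity argument above) and the verification that the invertibility condition imposed on $\GTh_0$ in \cite{HS-fund-groups} coincides with invertibility in $(\Zhat\times\wh{\F}_2,\bullet)$ — equivalently, that the multiplication used there on $\GTh_0$ is the operation $\bullet$ — so that the two unit groups genuinely agree once the underlying sets have been matched; here one can invoke Proposition \ref{prop:GT-gen-mon-submonoid} and the faithful assignment $(\hat{m},\hat{f})\mapsto E_{\hat{m},\hat{f}}$ to realize both groups as the same set of continuous automorphisms of $\wh{\F}_2$. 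Had I instead tried to prove the equivalence by reducing the profinite relations modulo every $\N\in\NFI_{\PB_3}(\B_3)$ and applying Proposition \ref{prop:simple-hexa} at finite level, the genuinely delicate step would be the compatibility of the automorphism $\tau$ with the reduction maps $\wh{\F}_2\to\F_2/\N_{\F_2}$, since $\tau$ is realized by conjugation by $\si_1\si_2$ only up to the central element $c$; working directly in $\wh{\B}_3$ and $\wh{\F}_2$ is precisely what avoids this complication.
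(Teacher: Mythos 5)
Your plan for matching the monoids is legitimately different from the paper's and should work: you prove a profinite analogue of Proposition~\ref{prop:simple-hexa} directly in $\wh{\B}_3$, $\wh{\F}_2$, whereas the paper reduces modulo each $\N\in\NFI_{\PB_3}(\B_3)$, applies the finite-level Proposition~\ref{prop:simple-hexa}, and then uses the cofinality of the families $\{\N\}$ and $\{\N_{\F_2}\}$ (via Propositions~\ref{prop:N-diamond} and \ref{prop:we-have-N}) to lift back to the profinite relations. Your route requires the extra care you identify about the central factor $c$ --- that for $\hat{f}\in[\wh{\F}_2,\wh{\F}_2]^{top.\,cl.}$ conjugation by $\si_1,\si_2,\D$ really is the substitution endomorphism with no residual $c$-power --- and the density/continuity argument you sketch does this; the paper's route sidesteps that subtlety entirely at the cost of an auxiliary cofinality lemma. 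Both are reasonable.

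The genuine gap is in the invertibility step, which is where the paper spends roughly half its proof. After you have shown that $\GTh_{gen,mon}$ coincides, as a subset of $\Zhat\times\wh{\F}_2$, with the set of pairs satisfying \eqref{H-I-hat}, \eqref{H-II-hat}, you still have two \emph{a priori} different conditions on such a pair $(\hat{m},\hat{f})$: (i) being invertible in the monoid $(\GTh_{gen,mon},\bullet)$ (the definition of $\GTh_{gen}$), and (ii) $E_{\hat{m},\hat{f}}$ being an automorphism of $\wh{\F}_2$ (the definition of $\GTh_0$). The injective monoid homomorphism of Proposition~\ref{prop:GT-gen-mon-submonoid} gives (i)~$\Rightarrow$~(ii), but it does \emph{not} give the converse: an injective monoid map sends units to units, but there is no reason that an element of a submonoid whose image happens to be invertible in the ambient monoid $\End(\wh{\F}_2)$ must already be invertible in the submonoid. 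Concretely, from (ii) one must produce $(\hat{k},\hat{g})\in\GTh_{gen,mon}$ with $(\hat{m},\hat{f})\bullet(\hat{k},\hat{g})=(\hat{k},\hat{g})\bullet(\hat{m},\hat{f})=(0,1_{\wh{\F}_2})$, and this requires checking that $\hat{k}:=-(2\hat{m}+1)^{-1}\hat{m}$, $\hat{g}:=E_{\hat{m},\hat{f}}^{-1}(\hat{f}^{-1})$ satisfy $\hat{g}\in[\wh{\F}_2,\wh{\F}_2]^{top.\,cl.}$ and the hexagon relations \eqref{hexa1-hat}, \eqref{hexa11-hat}. The last verification is not a formality; the paper proves it by applying the automorphism $T_{\hat{m},\hat{f}}$ of $\wh{\B}_3$ to both sides of the would-be hexagons for $(\hat{k},\hat{g})$ and using \eqref{Tmf-c}, \eqref{Tmf-hat-F2}, \eqref{Tmf-Delta1}. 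Saying that the faithful assignment $(\hat{m},\hat{f})\mapsto E_{\hat{m},\hat{f}}$ ``realizes both groups as the same set of automorphisms'' presupposes exactly the closure-under-inverse that needs to be established; you should supply this calculation (or cite the paper's) rather than leaving it as a remark.
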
  
\begin{proof}
According to \cite[Section 0.1]{HS-fund-groups}, $\GTh_0$ consists of elements
$(\hat{\la}, \hat{f}) \in \Zhat^\times \times [\wh{\F}_2, \wh{\F}_2]^{top.\, cl.}$ for which
the pair $(\hat{m}, \hat{f}) := \big((\hat{\la}-1)/2, \hat{f} \big) \in \Zhat \times [\wh{\F}_2, \wh{\F}_2]^{top.\, cl.}$
satisfies relations \eqref{H-I-hat}, \eqref{H-II-hat} and the endomorphism $E_{\hat{m}, \hat{f}}$ of $\wh{F}_2$ 
is invertible. In fact, the authors of \cite{HS-fund-groups} identify elements $(\hat{\la}, \hat{f})$ of $\GTh_0$ 
with the corresponding automorphisms $E_{\hat{m}, \hat{f}}$ of $\wh{F}_2$ and this is how they get 
the group structure on $\GTh_0$. 

Let us start with an element $(\hat{\la}, \hat{f}) \in \GTh_0$ and consider the corresponding pair
$$
(\hat{m}, \hat{f}) := 
\big((\hat{\la}-1)/2, \hat{f} \big) \in \Zhat \times [\wh{\F}_2, \wh{\F}_2]^{top.\, cl.}\,.
$$

Relations \eqref{H-I-hat}, \eqref{H-II-hat} imply that, for every $\N \in \NFI_{\PB_3}(\B_3)$, 
the pair 
$$
(m + N_{\ord}\ZZ, f \N_{\F_2}) : = \big(\hcP_{N_{\ord}}(\hat{m}), \hcP_{\N_{\F_2}}(\hat{f}) \big)
$$
satisfies relations \eqref{shexagon1} and \eqref{shexagon11}. In addition, we have 
$f \N_{\F_2} \in  [\F_2/\N_{\F_2}, \F_2/\N_{\F_2}]$. 

Thus Proposition \ref{prop:simple-hexa} implies that, for every $\N \in \NFI_{\PB_3}(\B_3)$, 
the pair $(m + N_{\ord}\ZZ, f \N_{\F_2}) : = \big(\hcP_{N_{\ord}}(\hat{m}), \hcP_{\N_{\F_2}}(\hat{f}) \big)$
satisfies relations \eqref{hexa1}, \eqref{hexa11}.  Since $\NFI_{\PB_3}(\B_3)$ is a cofinal subposet 
of $\NFI(\B_3)$, we conclude that the pair $(\hat{m}, \hat{f})$ satisfies hexagon relations 
\eqref{hexa1-hat} and \eqref{hexa11-hat}. 

Thus $(\hat{m}, \hat{f})$ belongs to the submonoid $\GTh_{gen, mon}$ and we need to show 
that the element $(\hat{m}, \hat{f})$ is invertible.

For this purpose we set
\begin{equation}
\label{formula-4-inverse}
\hat{k} := - (2\hat{m}+1)^{-1}\hat{m}, \qquad 
\hat{g} := E_{\hat{m}, \hat{f}}^{-1} (\hat{f}^{-1}).
\end{equation}

A direct computation shows that 
$$
(\hat{m}, \hat{f}) \bullet (\hat{k}, \hat{g}) = (0,1_{\wh{F}_2}).
$$
Therefore $E_{\hat{m}, \hat{f}} \circ E_{\hat{k}, \hat{g}} = \id_{\wh{F}_2}$ and hence 
\begin{equation}
\label{E-new-E-inverse}
E_{\hat{k}, \hat{g}} = E_{\hat{m}, \hat{f}}^{-1}
\end{equation}

Using \eqref{formula-4-inverse} and \eqref{E-new-E-inverse}, we get 
$$
2 \hat{k} \hat{m} + \hat{k} + \hat{m} = 0, 
$$
$$
\hat{g}\, E_{\hat{k}, \hat{g}} (\hat{f}) = \hat{g} \, E_{\hat{m}, \hat{f}}^{-1} (\hat{f}) = 
E_{\hat{m}, \hat{f}}^{-1} (\hat{f}^{-1}) \, E_{\hat{m}, \hat{f}}^{-1} (\hat{f})  = 
E_{\hat{m}, \hat{f}}^{-1} (1_{\wh{F}_2}) = 1_{\wh{F}_2}. 
$$
Thus the identity $(\hat{k}, \hat{g}) \bullet (\hat{m}, \hat{f}) = (0,1_{\wh{F}_2})$ is also satisfied
and the element $(\hat{m}, \hat{f})$ of the monoid $(\Zhat \times \wh{F}_2, \bullet) $ is indeed invertible.

Since $\hat{f} \in [\wh{\F}_2, \wh{\F}_2]^{top.\, cl.}$, the second equation in \eqref{formula-4-inverse}
and the continuity of the automorphism $ E_{\hat{m}, \hat{f}}^{-1}$
imply that $\hat{g} \in [\wh{\F}_2, \wh{\F}_2]^{top.\, cl.}$. Thus it remains to prove that 
the pair $(\hat{k}, \hat{g})$ satisfies hexagon relations \eqref{hexa1-hat}, \eqref{hexa11-hat}. 

Let us rewrite the right hand side of \eqref{hexa11-hat} for $(\hat{k}, \hat{g})$ as follows: 
$$
\si_2 \si_1 x_{23}^{-\hat{k}} c^{\hat{k}} \hat{g} = \D \si_2^{-(2 \hat{k} + 1)}  c^{\hat{k}} \hat{g}.
$$ 
Applying $T_{\hat{m}, \hat{f}}$ to the right hand side of \eqref{hexa11-hat} for $(\hat{k}, \hat{g})$ 
and using \eqref{Tmf-c},  \eqref{Tmf-hat-F2}, \eqref{Tmf-Delta1}, we get 
$$
T_{\hat{m}, \hat{f}} (\si_2 \si_1 x_{23}^{-\hat{k}} c^{\hat{k}} \hat{g}) = 
T_{\hat{m}, \hat{f}}  ( \D \si_2^{-(2 \hat{k} + 1)}  c^{\hat{k}} \hat{g} ) = 
$$
$$
\D c^{\hat{m}} \hat{f} \, \hat{f}^{-1} \si_2^{-(2 \hat{m} + 1)(2 \hat{k} + 1)} \hat{f}
c^{(2\hat{m} + 1) \hat{k}} \hat{f}^{-1} = \D \si_2^{-1} = \si_2 \si_1.
$$

Thus 
\begin{equation}
\label{Tmf-RHS}
T_{\hat{m}, \hat{f}} (\si_2 \si_1 x_{23}^{-\hat{k}} c^{\hat{k}} \hat{g}) =  \si_2 \si_1\,.
\end{equation}

Applying $T_{\hat{m}, \hat{f}}$ to the left hand side of \eqref{hexa11-hat} for $(\hat{k}, \hat{g})$, 
we get 
\begin{equation}
\label{Tmf-LHS}
T_{\hat{m}, \hat{f}} (\hat{g}^{-1} \si_2^{2\hat{k} + 1} \hat{g} \, \si_1^{2\hat{k} + 1}) = 
E_{\hat{m}, \hat{f}}(\hat{g})^{-1} 
\hat{f}^{-1} \si_2^{(2\hat{m}+1)(2\hat{k} + 1)} \hat{f} E_{\hat{m}, \hat{f}} (\hat{g}) \,
\si_1^{(2\hat{m}+1)(2\hat{k} + 1)} = \si_2 \si_1\,.
\end{equation}

Since $T_{\hat{m}, \hat{f}}$ is an automorphism of $\wh{\B}_3$, identities 
\eqref{Tmf-RHS} and \eqref{Tmf-LHS} imply that 
$$
\hat{g}^{-1} \si_2^{2\hat{k} + 1} \hat{g} \, \si_1^{2\hat{k} + 1} = 
\si_2 \si_1 x_{23}^{-\hat{k}} c^{\hat{k}} \hat{g}.
$$
Thus the pair $(\hat{k}, \hat{g})$ satisfies \eqref{hexa11-hat}. 

Using the similar argument, one can show that the pair $(\hat{k}, \hat{g})$ also 
satisfies \eqref{hexa1-hat}.

We proved that the pair $(\hat{m}, \hat{f})$ belongs to the group $\GTh_{gen}$.   

Let $(\hat{m}, \hat{f}) \in \GTh_{gen}$, i.e. $(\hat{m}, \hat{f})$ is an invertible element 
of the monoid $\GTh_{gen, mon}$. Let us prove that the pair 
$$
(\hat{\la}, \hat{f}), \qquad \hat{\la} := 2 \hat{m} + 1 
$$ 
belongs to the group $\GTh_0$. 

Relations  \eqref{hexa1-hat} and  \eqref{hexa11-hat} imply that, 
for every $\N \in \NFI_{\PB_3}(\B_3)$, the pair 
\begin{equation}
\label{pair-mod-N}
(m + N_{\ord}\ZZ, f \N_{\F_2}) : = \big(\hcP_{N_{\ord}}(\hat{m}), \hcP_{\N_{\F_2}}(\hat{f}) \big)
\end{equation}
satisfies hexagon relations \eqref{hexa1} and \eqref{hexa11} modulo $\N$. 
In addition, 
$
f \N_{\F_2} \in  [\F_2/\N_{\F_2}, \F_2/\N_{\F_2}].
$

Thus Proposition \ref{prop:simple-hexa} implies that, 
for every $\N \in \NFI_{\PB_3}(\B_3)$, the pair in \eqref{pair-mod-N}
satisfies relations \eqref{shexagon1}, \eqref{shexagon11}. 
 
Due to Proposition \ref{prop:we-have-N}, for every $\H \in \NFI(\F_2)$, there exists
$\N \in \NFI_{\PB_3}(\B_3)$ such that $\N_{\F_2} \le \H$. Thus the above observation 
about \eqref{shexagon1} and \eqref{shexagon11} implies that the pair 
$(\hat{m}, \hat{f})$ satisfies relations \eqref{H-I-hat} and \eqref{H-II-hat}. 
 
Since $\hat{f} \in [\wh{\F}_2, \wh{\F}_2]^{top.\, cl.}$ and $\hat{\la} = 2\hat{m}+1$ is a unit in the ring
$\Zhat$ (see Remarks \ref{rem:cyclotomic}, \ref{rem:virtual-cyclotomic}), it remains to show that 
the endomorphism $E_{\hat{m}, \hat{f}}$ is invertible. This is an obvious consequence of 
the second statement of Proposition \ref{prop:GT-gen-mon-submonoid}. Indeed, if $\vf: M \to \ti{M}$
is a homomorphism of monoids, the restriction of $\vf$ to the group $M^{\times}$ of invertible 
elements of $M$ gives us a group homomorphism $M^{\times} \to \ti{M}^{\times}$.  
 
We established a bijection between the set $\GTh_0$ (defined in \cite[Section 0.1]{HS-fund-groups})
and the set $\GTh_{gen}$. It remains to prove that this bijection is compatible with the group structures on 
$\GTh_0$ and $\GTh_{gen}$. Since the group structure on $\GTh_0$ is obtained by identifying 
elements $(\hat{\la}, \hat{f})$ of $\GTh_0$ with the corresponding automorphisms $E_{\hat{m}, \hat{f}}$ 
of $\wh{\F}_2$, the desired property follows from the second statement of Proposition 
\ref{prop:GT-gen-mon-submonoid}. 

Proposition \ref{prop:H-I-H-II} is proved. 
\end{proof}

\begin{remark}  
\label{rem:simple-hexagons}
Relations \eqref{H-I-hat} and \eqref{H-II-hat} may be interpreted as cocycle conditions 
and this interpretation was explored successfully in \cite{SashaLeilaCohomological}. 
\end{remark}

\appendix

\section{Selected statements related to profinite groups}
\label{app:profinite}

In this appendix, we prove several statements related to profinite groups.
These statements are often used in articles about the profinite version of the 
Grothendieck-Teichmueller group. However, it is hard to find proofs of these statements 
in the literature.  

Let $J$ be a directed poset and $\cF$ be a functor from $J$ to the category of finite 
groups. For $k_1, k_2 \in J$, $k_1 \le k_2$ we set $\te_{k_1 , k_2} := \cF(k_1 \to k_2)$.
 
It is convenient to identify elements of the product 
\begin{equation}
\label{product}
\prod_{k \in J} \cF(k)
\end{equation}
with functions 
\begin{equation}
\label{fun-J-bigsqcup}
f : J \, \to \,  \bigsqcup_{k \in J}  \cF(k)
\end{equation}
such that $f(k) \in \cF(k)$, $\forall ~ k \in J$.

Then $\lim(\cF)$ consists of functions \eqref{fun-J-bigsqcup} such that 
\begin{itemize}

\item $f(k) \in \cF(k)$, $\forall ~ k \in J$ and 

\item $\te_{k_1, k_2}(f(k_1)) = f(k_2)$,  $\forall~~ k_1, k_2 \in J$, $k_1 \le k_2$. 

\end{itemize}

For $k \in J$, $\eta_k$ denotes the standard projection from 
$\lim(\cF)$ to $\cF(k)$, i.e. 
$$
\eta_k(f) := f(k). 
$$

We consider the product space \eqref{product} with the standard product 
topology and we equip $\lim(\cF)$ with the corresponding subset topology. 
Let us also recall \cite[Proposition 1.1.3]{RZ-profinite} that, as the topological space, 
$\lim(\cF)$ is compact and Hausdorff. It is known \cite[Section 1.1]{RZ-profinite} that
every profinite group is $\lim(\cF)$ for a functor $\cF$ from a directed poset 
to the category of finite groups. 
 
For every group $G$, the poset $\NFI(G)$ is clearly directed and 
the assignments
$$
\N \mapsto G/\N, \qquad 
\te_{\K, \N} := \cP_{\K, \N}: G/ \K \to G/ \N,  
\qquad \K, \N \in \NFI(G), ~~ \K \le \N
$$ 
define a functor $\cF_{G}$ from $\NFI(G)$ to the category of finite groups. 
The profinite completion $\wh{G}$ of $G$ is the limit $\lim(\cF_{G})$ of this functor.

As we mentioned above, it is convenient to identify elements $\hat{g}$ of $\wh{G}$ with 
functions
$$
\hat{g}: \NFI(G) \,\to\, \bigsqcup_{\N \in \NFI(G)} G/\N
$$
such that 
\begin{itemize}

\item $\hat{g}(\N) \in G/\N$, $\forall~~ \N \in \NFI(G)$ and 

\item $\cP_{\K, \N} \big(\hat{g}(\K) \big) = \hat{g}(\N),$
$\forall$ $\K, \N \in \NFI(G)$, $\K \le \N$.

\end{itemize}

In this set-up, $\eta_{\N} := \hcP_{\N}$.

We denote by $j$ the standard group homomorphism $G \to \wh{G}$ defined 
by the formula 
$$
j(g)(\N) := g \N, \qquad \N \in \NFI(G).
$$
Recall \cite[Lemma 1.1.7]{RZ-profinite} that, for every group $G$, the subgroup $j(G)$ is dense 
in $\wh{G}$. Moreover, the homomorphism $j: G \to \wh{G}$ is injective 
if and only if the group $G$ is residually finite.  

\begin{lem}  
\label{lem:extend-uniquely}
Let $G$ be a group and $j$ be the standard homomorphism $G \to \wh{G}$.
For every group homomorphism $\vf$ from $G$ to a profinite group $H$, there exists 
a unique continuous group homomorphism 
$$
\hat{\vf} : \wh{G} \to H
$$
such that $\hat{\vf} \circ j = \vf$. 
\end{lem}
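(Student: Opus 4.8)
The plan is to use the universal property of the inverse limit $\wh{G} = \lim(\cF_G)$ together with the fact that $H$, being profinite, is canonically the inverse limit of its own finite quotients $\{H/U : U \in \NFI(H)\}$ (or, more conveniently, of the finite continuous quotients of $H$). The key observation is that a homomorphism $\vf : G \to H$ produces, for each open normal $U \unlhd H$, a homomorphism $G \to H/U$ whose kernel lies in $\NFI(G)$, hence factors through $G/\N$ for some $\N$; assembling these gives the required continuous homomorphism $\wh{G} \to H$.

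First I would set up notation: since $H$ is profinite, $H \cong \lim_{U} H/U$ over the directed poset of open normal subgroups $U$ of $H$, and a map into $H$ is the same as a compatible family of maps into the $H/U$. Given $\vf : G \to H$ and such a $U$, the composite $G \to H \to H/U$ has finite image, so its kernel $\N_U := \vf^{-1}(U)$ lies in $\NFI(G)$, and we get an induced homomorphism $\bar\vf_U : G/\N_U \to H/U$. Then I would define $\hat\vf : \wh{G} \to H$ on an element $\hat g \in \wh{G}$ (thought of as a compatible function $\N \mapsto \hat g(\N) \in G/\N$) by specifying its image in each $H/U$ to be $\bar\vf_U(\hat g(\N_U))$; compatibility of the family $\{\N_U\}$ under refinement of $U$, combined with the compatibility conditions defining elements of $\wh{G}$, shows this lands in $\lim_U H/U = H$. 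One checks directly that $\hat\vf$ is a group homomorphism (pointwise in each $H/U$), that it is continuous (the composite $\wh{G} \to H \to H/U$ factors through the continuous projection $\hcP_{\N_U} : \wh{G} \to G/\N_U$, so each coordinate is continuous, hence $\hat\vf$ is), and that $\hat\vf \circ j = \vf$ by tracing through the definitions on each coordinate $H/U$.

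For uniqueness, suppose $\hat\vf_1, \hat\vf_2 : \wh{G} \to H$ are both continuous with $\hat\vf_i \circ j = \vf$. They agree on the subset $j(G)$, which is dense in $\wh{G}$ by \cite[Lemma 1.1.7]{RZ-profinite}; since $H$ is Hausdorff (being profinite) and $\hat\vf_1, \hat\vf_2$ are continuous, the equalizer $\{\hat g : \hat\vf_1(\hat g) = \hat\vf_2(\hat g)\}$ is closed and contains a dense set, hence is all of $\wh{G}$. Thus $\hat\vf_1 = \hat\vf_2$.

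The main obstacle I anticipate is purely bookkeeping rather than conceptual: one must be careful that the assignment $U \mapsto \N_U$ is order-reversing in the right sense and that the induced diagram of finite groups genuinely commutes, so that the formula for $\hat\vf(\hat g)$ produces a well-defined element of $H = \lim_U H/U$ independent of the choices involved (e.g. if several $U$'s give the same $\N$, or if one enlarges $\N_U$ to a smaller-index subgroup). Making this verification clean — essentially checking that $\bar\vf_{U'} \circ \cP_{\N_{U'}, \N_U}$ equals the composite $G/\N_{U'} \to G/\N_U \to H/U$ for $U' \le U$ — is the one place requiring genuine (if routine) care; everything else follows from density of $j(G)$ and the universal property of the inverse limit.
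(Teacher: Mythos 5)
Your proposal is correct and follows essentially the same route as the paper's proof: pull back each finite quotient of $H$ to a finite-index normal subgroup of $G$, induce maps $G/\N_U \to H/U$, define $\hat\vf$ coordinatewise, check continuity by factoring each coordinate through $\hcP_{\N_U}$, and get uniqueness from density of $j(G)$ plus the Hausdorff property of $H$. The only (immaterial) difference is that the paper writes $H$ as $\lim(\cF)$ over an abstract directed poset while you use the canonical presentation of $H$ as the limit of its quotients by open normal subgroups.
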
  
\begin{proof}
Since $H$ is a profinite group, there exists a directed poset $J$ and a functor 
$\cF$ from $J$ to the category of finite groups such that 
$H = \lim(\cF)$. For $k \in J$, we denote by $\eta_k$ the standard continuous
group homomorphism from $H$ to $\cF(k)$.

For every $k \in J$, $\eta_k \circ \vf$ is a homomorphism from $G$ to 
the finite group $\cF(k)$. Hence $\ker(\eta_k \circ \vf)$ is a finite index normal 
subgroup of $G$. We denote this subgroup by $\N_k$,  
$$
\N_k := \ker \big( G \overset{\eta_k \circ \vf}{~\tto~} \cF(k) \big). 
$$

It is easy to see that the formula 
\begin{equation}
\label{vf-k}
\vf_k(g \N_k) :=  \eta_k \circ \vf(g) 
\end{equation}
defines a group homomorphism from the finite group $G/\N_k$ to the finite 
group $\cF(k)$.

Let us also observe that, if $k_1, k_2 \in J$ and $k_1 \le k_2$ then 
$\N_{k_1} \le \N_{k_2}$ and the diagram 
\begin{equation}
\label{vf-k-diagram}
\begin{tikzpicture}
\matrix (m) [matrix of math nodes, row sep=2.3em, column sep=2.3em]
{G/\N_{k_1}   &  \cF(k_1) \\ 
G/\N_{k_2} &  \cF(k_2)\\};
\path[->, font=\scriptsize]
(m-1-1) edge node[above] {$\vf_{k_1}$} (m-1-2)
edge node[left] {$\cP_{\N_{k_1} , \N_{k_2} }$} (m-2-1) 
(m-2-1) edge node[above] {$\vf_{k_2}$} (m-2-2)
(m-1-2) edge node[right] {$\te_{k_1 , k_2}$} (m-2-2);
\end{tikzpicture}
\end{equation}
commutes. Here $\te_{k_1 , k_2} := \cF(k_1 \to k_2)$. 

We claim that the formula 
\begin{equation}
\label{vf-hat-dfn}
(\hat{\vf}(\hat{g}))(k) := \vf_{k}\big( \hat{g} (\N_k)  \big), \qquad k \in J
\end{equation}
defines a continuous group homomorphism $\hat{\vf}$ from $\wh{G}$ to $H$. 

Indeed, it is obvious that, for every $k \in J$ and every $\hat{g} \in \wh{G}$, 
$(\hat{\vf}(\hat{g}))(k) \in \cF(k)$. Thus $\hat{\vf}(\hat{g})$ belongs to the product 
$$
\prod_{k \in J} \cF(k)\,.
$$

The commutativity of the diagram in \eqref{vf-k-diagram} implies that 
$\hat{\vf}(\hat{g})$ satisfies the condition
$$
\te_{k_1, k_2} \big( (\hat{\vf}(\hat{g})) (k_1) \big) = (\hat{\vf}(\hat{g}))(k_2)
$$
whenever $k_1 \le k_2$. Thus $\hat{\vf}(\hat{g})$ belongs to 
$\displaystyle H \subset \prod_{k \in J} \cF(k)$. 

It is easy to see that $\hat{\vf}$ is indeed a group homomorphism $\wh{G} \to H$. 

Equation \eqref{vf-hat-dfn} implies that 
$$
\eta_k \circ \hat{\vf} = \vf_k \circ \hcP_{\N_k}\,.
$$
Hence the composition $\eta_k \circ \hat{\vf}$ is continuous for every $k \in J$. 

Thus we proved that equation \eqref{vf-hat-dfn} indeed defines a continuous 
group homomorphism from $\wh{G}$ to $H$.

Using \eqref{vf-k}, we see that, for every $k \in J$ and $g \in G$, 
we have 
$$
\big( \hat{\vf}\circ j (g)\big)(k) = \vf_k (g \N_k) = \eta_k (\vf(g)). 
$$
Thus $\hat{\vf} \circ j = \vf$.

Let $\psi: \wh{G} \to H$ be a continuous group homomorphism such 
that $\psi \circ j = \vf$. Since $\hat{\vf} \circ j = \vf$, we have 
\begin{equation}
\label{psi-hat-vf-agree}
\psi \big|_{j(G)} = \wh{\vf}\big|_{j(G)}.
\end{equation}

Since $j(G)$ is dense in $\wh{G}$ and $H$ is Hausdorff, identity  
\eqref{psi-hat-vf-agree} implies that $\psi = \hat{\vf}$. Thus the uniqueness of 
$\hat{\vf}$ is established and the lemma is proved.
\end{proof}

\begin{cor}  
\label{cor:extend-uniquely}
Let $G$, $H$ be groups and $j$ be the standard homomorphism $G \to \wh{G}$.
For every group homomorphism $\vf :G  \to \wh{H}$, there exists 
a unique continuous group homomorphism 
$$
\hat{\vf} : \wh{G} \to \wh{H}
$$
such that $\hat{\vf} \circ j = \vf$. If $\ga$ is an automorphism of $G$ then 
$\wh{j \circ \ga}$ is a continuous automorphism of $\wh{G}$. 
\end{cor}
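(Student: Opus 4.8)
The plan is to deduce both assertions directly from Lemma \ref{lem:extend-uniquely}. For the existence and uniqueness of $\wh{\vf}$, I would simply observe that $\wh{H}$ is a profinite group, since by definition $\wh{H} = \lim(\cF_H)$ is a limit of finite groups. Hence Lemma \ref{lem:extend-uniquely}, applied with the profinite group $\wh{H}$ playing the role of $H$ in that lemma and with the given homomorphism $\vf : G \to \wh{H}$, produces a unique continuous group homomorphism $\wh{\vf} : \wh{G} \to \wh{H}$ with $\wh{\vf} \circ j = \vf$. There is essentially nothing further to do for this part.

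For the second assertion, note first that $j \circ \ga : G \to \wh{G}$ is an ordinary group homomorphism into the profinite group $\wh{G}$, so by the first part it extends uniquely to a continuous group homomorphism $\wh{j \circ \ga} : \wh{G} \to \wh{G}$ satisfying $\wh{j \circ \ga} \circ j = j \circ \ga$. To see that this map is an automorphism, I would run the same construction for $\ga^{-1}$, obtaining a continuous homomorphism $\wh{j \circ \ga^{-1}} : \wh{G} \to \wh{G}$ with $\wh{j \circ \ga^{-1}} \circ j = j \circ \ga^{-1}$, and then analyze the composite $\wh{j \circ \ga} \circ \wh{j \circ \ga^{-1}}$. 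This is a continuous homomorphism $\wh{G} \to \wh{G}$, and restricting to $j(G)$ gives $(\wh{j \circ \ga} \circ \wh{j \circ \ga^{-1}}) \circ j = \wh{j \circ \ga} \circ (j \circ \ga^{-1}) = (j \circ \ga) \circ \ga^{-1} = j$. Since $\id_{\wh{G}}$ is also a continuous homomorphism whose restriction to $j(G)$ equals $j$, the uniqueness clause of Lemma \ref{lem:extend-uniquely} (equivalently, density of $j(G)$ in $\wh{G}$ together with the fact that $\wh{G}$ is Hausdorff) forces $\wh{j \circ \ga} \circ \wh{j \circ \ga^{-1}} = \id_{\wh{G}}$. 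The symmetric computation gives $\wh{j \circ \ga^{-1}} \circ \wh{j \circ \ga} = \id_{\wh{G}}$, so $\wh{j \circ \ga}$ is a bijection with continuous inverse $\wh{j \circ \ga^{-1}}$, i.e. a continuous automorphism of $\wh{G}$.

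The only subtlety worth spelling out in the write-up is the identification, used in the uniqueness step above, of the unique continuous extension of $j : G \to \wh{G}$ with $\id_{\wh{G}}$: this is immediate because $\id_{\wh{G}} \circ j = j$ and uniqueness leaves no other possibility. Beyond that, the whole argument is purely formal bookkeeping with the relations $\wh{\psi} \circ j = \psi$, so I do not expect any genuine obstacle; the most error-prone point is simply keeping the order of the compositions straight, particularly when pushing $j \circ \ga$ through $\wh{j \circ \ga^{-1}}$ in the correct order.
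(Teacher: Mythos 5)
Your proposal is correct and essentially reproduces the paper's argument: the first part is the immediate application of Lemma \ref{lem:extend-uniquely} to the profinite group $\wh{H}$, and for the second part the paper likewise sets $\ka := \ga^{-1}$, forms the corresponding extensions $\hat{\ga}$ and $\hat{\ka}$, checks that both composites restrict to $j$ on $j(G)$, and invokes density of $j(G)$ together with Hausdorffness of $\wh{G}$ to conclude they equal $\id_{\wh{G}}$.
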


\begin{proof}
The first statement of the corollary follows Lemma \ref{lem:extend-uniquely}.

Let $\ga \in \Aut(G)$ and $\ka := \ga^{-1}$. By abuse of notation, we denote 
by $\hat{\ga}$ (resp. $\hat{\ka}$) the continuous group homomorphism $\wh{G} \to \wh{G}$ 
corresponding to $j \circ \ga$ (resp. to  $j \circ \ka$). 

For $\hat{\ga}$ and $\hat{\ka}$, we have 
$$
\hat{\ga} \circ j = j \circ \ga, \qquad \hat{\ka} \circ j = j \circ \ka.
$$
Using these identities, we get 
$$
\hat{\ga} \circ \hat{\ka} \circ j = \hat{\ga} \circ j \circ \ka = j \circ \ga \circ \ka = j
$$ 
and 
$$
\hat{\ka} \circ \hat{\ga} \circ j = \hat{\ka} \circ j \circ \ga = j \circ \ka \circ \ga = j.
$$ 

Since $\hat{\ga} \circ \hat{\ka} \big|_{j(G)} = \id \big|_{j(G)}$,  
$\hat{\ka} \circ \hat{\ga} \big|_{j(G)} = \id \big|_{j(G)}$, $j(G)$ is dense in $\wh{G}$ 
and $\wh{G}$ is Hausdorff, we conclude that 
$$
\hat{\ga} \circ \hat{\ka}  = \id_{\wh{G}}, \qquad 
\hat{\ka} \circ \hat{\ga}  = \id_{\wh{G}}.  
$$ 
Thus $\hat{\ga}$ is invertible and $\hat{\ka} = \hat{\ga}^{-1}$. 
\end{proof}

\bigskip

Let us prove that 
\begin{prop}  
\label{prop:hat-N-kernel}
For every $\N \in \NFI(G)$, the kernel of the homomorphism 
$\hcP_{\N} : \wh{G} \to G/\N$ is isomorphic to the profinite completion 
$\wh{\N}$ of $\N$.  
\end{prop}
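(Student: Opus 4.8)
The plan is to produce an explicit continuous group isomorphism between $\ker(\hcP_{\N})$ and $\wh{\N}$ by first identifying what the kernel looks like concretely. Recall that an element $\hat{g} \in \wh{G}$ is a compatible system $(\hat{g}(\M))_{\M \in \NFI(G)}$ with $\hat{g}(\M) \in G/\M$, and that $\hcP_{\N}(\hat g) = \hat g(\N)$. Hence $\hat g \in \ker(\hcP_{\N})$ if and only if $\hat g(\N) = 1_{G/\N}$, i.e. $\hat g$ lies ``over'' the trivial coset at the level $\N$. First I would observe that the poset $\NFI_{\N}(G)$ of finite-index normal subgroups of $G$ contained in $\N$ is cofinal in $\NFI(G)$ (given $\M \in \NFI(G)$, the intersection $\M \cap \N$ lies in $\NFI_\N(G)$ and is below $\M$), so that the coordinates $\hat g(\M)$ for $\M \in \NFI_\N(G)$ already determine $\hat g$. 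Therefore an element of $\ker(\hcP_\N)$ is the same thing as a compatible system $(\hat g(\M))_{\M \in \NFI_\N(G)}$ with $\hat g(\M) \in G/\M$ and $\hat g(\N) = 1$; the compatibility with the coordinate at $\N$ forces $\hat g(\M) \in \N/\M$ for every $\M \in \NFI_\N(G)$, since $\cP_{\M,\N}(\hat g(\M)) = \hat g(\N) = 1_{G/\N}$.

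Next I would set up the comparison with $\wh{\N}$. The key point is a cofinality/comparison statement between the index posets: $\NFI_\N(G) \subseteq \NFI(\N)$ (each $\M \in \NFI_\N(G)$ is in particular normal and of finite index in $\N$), and this inclusion is cofinal in $\NFI(\N)$ — given any $\Q \in \NFI(\N)$, its normal core $\Core_G(\Q)$ in $G$ is normal in $G$, of finite index (finite index in $\N$ which has finite index in $G$), and contained in $\N$, hence lies in $\NFI_\N(G)$ and below $\Q$. Moreover for $\M \in \NFI_\N(G)$ the finite group $\N/\M$ appearing as the $\M$-coordinate group for $\wh\N$ is literally the same finite group $\N/\M \le G/\M$ that the kernel coordinates live in, and the transition maps agree. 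Since a profinite group can be computed as the limit over any cofinal subposet of its defining directed system (a standard fact about limits of surjective inverse systems, e.g. via \cite[Section 1.1]{RZ-profinite}), we get $\wh\N \cong \lim_{\M \in \NFI_\N(G)} \N/\M$. I would then define the map $\ker(\hcP_\N) \to \lim_{\M \in \NFI_\N(G)} \N/\M$ by restricting the compatible system to coordinates in $\NFI_\N(G)$ (landing in $\N/\M$ by the observation above), check it is a continuous group homomorphism, and write down the inverse (extend a system over $\NFI_\N(G)$ to all of $\NFI(G)$ using cofinality: for general $\M$, set the coordinate to be the image of the $\M \cap \N$-coordinate under $\cP_{\M\cap\N,\M}$, and verify well-definedness and compatibility).

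The routine verifications — that the two maps are mutually inverse group homomorphisms, and that they are continuous (each is continuous because every coordinate projection is, by the universal property of the limit topology) and hence homeomorphisms between compact Hausdorff spaces — I would carry out briefly but not belabor. The main obstacle, and the step deserving the most care, is the double cofinality argument: one must check simultaneously that $\NFI_\N(G)$ is cofinal in $\NFI(G)$ (so kernel coordinates outside $\NFI_\N(G)$ are redundant) and cofinal in $\NFI(\N)$ (so $\wh\N$ is the limit over just these), and that under this identification the transition maps for the kernel system and for the $\wh\N$ system coincide. Once this bookkeeping is set up correctly, the isomorphism $\ker(\hcP_\N) \cong \wh\N$ is immediate, and this is exactly the identification used (``by abuse of notation'') in the paragraph preceding Proposition \ref{prop:KF2-hat-NF2-hat}.
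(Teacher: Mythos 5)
Your proof is correct and takes essentially the same approach as the paper: both identify $\wh{\N}$ as the limit over the cofinal subposet $\NFI_{\N}(G)$ of $\NFI(\N)$ (via normal cores), then produce the isomorphism by restricting the coordinates $\hcP_{\H}|_{\ker(\hcP_{\N})}$ (which land in $\N/\H \le G/\H$ by compatibility with the trivial coordinate at $\N$) and invoking the compact-Hausdorff argument for the homeomorphism. The only difference is that you supply the explicit inverse using the cofinality of $\NFI_{\N}(G)$ in $\NFI(G)$, a detail the paper dismisses with "it is not hard to see that $\ga$ is a bijection."
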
  
\begin{proof}
For every $L \in \NFI(\N)$, the normal core $\Core_G(L)$ of $L$ in $G$ is an element of 
$\NFI_{\N}(G)$. Therefore the subposet $\NFI_{\N}(G)$ of $\NFI(\N)$ is cofinal and 
hence the limit of the functor 
\begin{equation}
\label{functor-from-NFI-N-G}
\H ~~\mapsto~~ \N / \H
\end{equation}
from $\NFI_{\N}(G)$ to the category of finite groups is isomorphic 
to $\wh{\N}$ (see \cite[Lemma 1.1.9]{RZ-profinite}).

Let 
$$
\K : = \ker\big(  \wh{G} \overset{\hcP_{\N}}{\tto} G/\N \big).
$$

For every $\H \in \NFI_{\N}(G)$, the restriction of the continuous homomorphism 
$\hcP_{\H} :  \wh{G} \to G/\H$ gives us a continuous homomorphism 
$$
\hcP_{\H} \big|_{\K} : \K \to \N / \H. 
$$ 
Moreover, for all $\H_1, \H_2 \in \NFI_{\N}(G)$ with $\H_1 \le \H_2$, the diagram 
$$
\begin{tikzpicture}
\matrix (m) [matrix of math nodes, row sep=1.8em, column sep=1.8em]
{ ~  & \K &  ~ \\
\N / \H_1 & ~ &  \N / \H_2\\};
\path[->, font=\scriptsize]
(m-1-2) edge node[above] {$\hcP_{\H_1}~~~$} (m-2-1)
edge node[above] {$~~~\hcP_{\H_2}$} (m-2-3)
(m-2-1)  edge node[above] {$\cP_{\H_1, \H_2}$}  (m-2-3);
\end{tikzpicture}
$$
commutes. 

Hence we get a continuous group homomorphism $\ga: \K \to \wh{\N}$, 
where $\wh{\N}$ is identified with the limit of functor  
\eqref{functor-from-NFI-N-G}. It is not hard to see that $\ga$ is a bijection.
Since $\K$ is compact ($\K$ is a closed subset of the compact space $\wh{G}$)
and $\ga$ is a continuous bijection from a compact space $\K$ to the Hausdorff 
space $\wh{\N}$, $\ga$ is a homeomorphism.  
Since $\ga$ is also an isomorphism of groups, we proved that the topological groups 
$\K$ and $\wh{\N}$ are isomorphic.
\end{proof}

\bigskip

\noindent\textsc{Department of Mathematics,
Temple University, \\
Wachman Hall Rm. 638\\
1805 N. Broad St.,\\
Philadelphia PA, 19122 USA \\
\emph{E-mail address:} {\bf vald@temple.edu}}

\bigskip
\bigskip

\noindent\textsc{School of Mathematics,\\
Georgia Institute of Technology \\
686 Cherry Street \\
Atlanta, GA 30332-0160 USA \\
\emph{E-mail address:} {\bf jguynee@gatech.edu}}


\begin{thebibliography}{99}


%

\bibitem{BB-Horel-Robertson} P. Boavida de Brito, G. Horel and M. Robertson, 
Operads of genus zero curves and the Grothendieck-Teichmueller group,
Geom. Topol. {\bf 23}, 1 (2019) 299--346.

\bibitem{YD-final-paper} I. Bortnovskyi and V. Pashkovskyi, 
Exploration of the Grothendieck-Teichmueller ($\GT$) shadows for the
dihedral poset, {\it The Final Research Paper. MIT Program ``Yulia's Dream'', 2022-23}
\url{https://math.temple.edu/~vald/BortnovskyiPashkovskyi_YD_FinalResearchPaper.pdf}

\bibitem{Combe-Kalugin} N.C. Combe and A. Kalugin, 
Hidden symmetries of the Grothendieck--Teichmueller group, 
\url{https://arxiv.org/abs/2209.00966} 

\bibitem{Combe-Manin} N.C. Combe and Y. I. Manin, 
Genus zero modular operad and absolute Galois group,
North-West. Eur. J. Math. {\bf 8}, i (2022) 25--60.


\bibitem{GTshadows} V. A. Dolgushev,  K.Q. Le and A. Lorenz, 
What are $\GT$-shadows? accepted to Algebr. Geom. Topol., 
\url{https://arxiv.org/abs/2008.00066}

\bibitem{Drinfeld} V. Drinfeld, On quasitriangular quasi-Hopf algebras and 
on a group that is closely connected with $\mathrm{Gal}(\ol{\QQ}/\QQ)$,
Algebra i Analiz {\bf 2}, 4 (1990) 149--181. 

\bibitem{Jordan} J. Ellenberg, Galois invariants of dessins d'enfants, 
{\it Arithmetic fundamental groups and noncommutative algebra} (Berkeley, CA, 1999), 27--42,
Proc. Sympos. Pure Math., {\bf 70}, Amer. Math. Soc., Providence, RI, 2002.

\bibitem{Fresse1} B. Fresse, Homotopy of operads and Grothendieck-Teichmueller groups. Part 1, 
The algebraic theory and its topological background, {\it Mathematical Surveys and Monographs,} 
{\bf 217}. AMS, Providence, RI, 2017. xlvi+532 pp.


\bibitem{Guillot} P. Guillot, The Grothendieck-Teichmueller group of a finite group and $G$-dessins d'enfants. 
Symmetries in graphs, maps, and polytopes, 159--191, Springer Proc. Math. Stat., {\bf 159}, Springer, 2016;
\url{https://arxiv.org/abs/1407.3112}

\bibitem{Guillot1} P. Guillot, The Grothendieck-Teichmueller group of $PSL(2,q)$, 
J. Group Theory {\bf 21}, 2 (2018) 241--251;
\url{https://arxiv.org/abs/1604.04415}

\bibitem{HS} D. Harbater and L. Schneps, Approximating Galois orbits of dessins, 
{\it Geometric Galois actions,} 1, 205--230, London Math. Soc. Lecture Note Ser., {\bf 242}, 
Cambridge Univ. Press, Cambridge, 1997.

\bibitem{HS-fund-groups} D. Harbater and L. Schneps,
Fundamental groups of moduli and the Grothendieck-Teichmueller group, 
Trans. Amer. Math. Soc. {\bf 352}, 7 (2000) 3117--3148. 

\bibitem{HSL-T-tower} A. Hatcher, P. Lochak and L. Schneps, 
On the Teichmueller tower of mapping class groups, 
J. Reine Angew. Math. {\bf 521} (2000) 1--24.

\bibitem{HerfortRibes} W. Herfort and L. Ribes, 
Torsion elements and centralizers in free products of profinite groups,
J. Reine Angew. Math. {\bf 358} (1985) 155--161.

\bibitem{HMM-numer-invar} Y. Hoshi, A. Minamide and S. Mochizuki, 
Group-theoreticity of numerical invariants and distinguished subgroups of configuration space groups, 
Kodai Math. J. {\bf 45}, 3 (2022) 295--348.
 

\bibitem{Ihara} Y. Ihara, On the embedding of $\Gal(\ol{\QQ}/\QQ)$ into $\GTh$, 
{\it with an appendix by M. Emsalem and P. Lochak,} 
London Math. Soc. Lecture Note Ser., {\bf 200}, The Grothendieck theory of dessins d'enfants 
(Luminy, 1993), 289--321, Cambridge Univ. Press, Cambridge, 1994.

\bibitem{Braids} C. Kassel and V. Turaev, Braid groups, 
{\it with the graphical assistance of Olivier Dodane,}
Graduate Texts in Mathematics, {\bf 247.} Springer, New York, 2008. xii+340 pp.


\bibitem{LNS-new-version-GT} P. Lochak, H. Nakamura and L. Schneps, 
On a new version of the Grothendieck-Teichmueller group, 
C. R. Acad. Sci. Paris S\'er. I Math. {\bf 325}, 1 (1997) 11--16.

\bibitem{OpenGTLeila} P. Lochak and L. Schneps, 
Open problems in Grothendieck-Teichmueller theory, 
{\it Problems on mapping class groups and related topics,} 165--186, 
Proc. Sympos. Pure Math., {\bf 74}, Amer. Math. Soc., Providence, RI, 2006. 

\bibitem{SashaLeilaCohomological} P. Lochak and L. Schneps, A cohomological interpretation of the 
Grothendieck-Teichmueller group, {\it with an appendix by C. Scheiderer.} 
Invent. Math. {\bf 127}, 3 (1997) 571--600. 

\bibitem{MN-Aut-Bn-hat} A. Minamide and H. Nakamura, 
The automorphism groups of the profinite braid groups, 
Amer. J. Math. {\bf 144}, 5 (2022) 1159--1176; 
\url{https://arxiv.org/abs/1904.06749}

\bibitem{NS-subgroup-of-GT} H. Nakamura and L. Schneps, 
On a subgroup of the Grothendieck-Teichmueller group acting on the tower of profinite Teichmueller 
modular groups, Invent. Math. {\bf 141}, 3 (2000) 503--560.

\bibitem{NikolovSegal} N. Nikolov and D. Segal, 
Finite index subgroups in profinite groups,
C. R. Math. Acad. Sci. Paris {\bf 337}, 5 (2003), no.5, 303--308.


\bibitem{Pop-tripod} F. Pop, 
Finite tripod variants of I/OM: on Ihara's question/Oda-Matsumoto conjecture, 
Invent. Math. {\bf 216}, 3 (2019) 745--797.

\bibitem{Pop-survey} F. Pop, Little survey on I/OM and its variants and their relation to 
(variants of) $\GTh$ -- old \& new, Topology Appl. {\bf 313} (2022) Paper No. 107993.

\bibitem{Pop-Topaz} F. Pop and A. Topaz, A linear variant of GT,
\url{https://arxiv.org/abs/2104.10504}

\bibitem{RZ-profinite} L. Ribes and P. Zalesskii, Profinite groups. Second edition. 
{\it A Series of Modern Surveys in Mathematics}  {\bf 40}. Springer-Verlag, Berlin, 2010. xvi+464 pp.

\bibitem{Leila-survey} L. Schneps, The Grothendieck-Teichmueller group 
$\widehat{\mathsf{GT}}$: a survey, {\it Geometric Galois actions}, 1, 183--203,
London Math. Soc. Lecture Note Ser., {\bf 242}, Cambridge Univ. Press, 
Cambridge, 1997. 




\bibitem{JXthesis} J. Xia, $\GT$-Shadows related to finite quotients of the
full modular group, Master Thesis, 2021, Temple University, 
\url{https://scholarshare.temple.edu/handle/20.500.12613/6910}

\end{thebibliography}
\end{document}